\documentclass[10pt,a4paper]{article}

\usepackage[T1]{fontenc}
\usepackage{amssymb}
\usepackage{amsthm}
\usepackage{mathtools}
\usepackage{enumitem}
\usepackage{aliascnt}
\usepackage{hyperref}
\numberwithin{equation}{section}
\newtheorem{theorem}{Theorem}[section]

\newaliascnt{proposition}{theorem}

\aliascntresetthe{proposition}

\newaliascnt{lemma}{theorem}
\newtheorem{lemma}[lemma]{Lemma}
\aliascntresetthe{lemma}

\newaliascnt{corollary}{theorem}
\newtheorem{corollary}[corollary]{Corollary}
\aliascntresetthe{corollary}

\newaliascnt{conjecture}{theorem}
\newtheorem{conjecture}[conjecture]{Conjecture}
\aliascntresetthe{conjecture}

\theoremstyle{definition}
\newaliascnt{claim}{theorem}
\newtheorem{claim}[claim]{Claim}
\aliascntresetthe{claim}

\usepackage[noabbrev,capitalize]{cleveref}
\crefname{equation}{}{}

\crefname{theorem}{Theorem}{Theorems}
\crefname{proposition}{Proposition}{Propositions}
\crefname{lemma}{Lemma}{Lemmas}
\crefname{corollary}{Corollary}{Corollaries}
\crefname{conjecture}{Conjecture}{Conjectures}
\crefname{claim}{Claim}{Claims}

\newcommand{\F}{\mathcal F}
\newcommand{\G}{\mathcal G}
\newcommand{\A}{\mathcal A}
\newcommand{\B}{\mathcal B}
\newcommand{\Y}{\mathcal Y}

\begin{document}

\title{New Extremal Ranges and Constructions of the Erd\H{o}s--Kleitman Problem}
\author{
        Cheng Chi\thanks{School of Mathematical Sciences, Shanghai Jiao Tong University, 800 Dongchuan Road, Shanghai 200240, China.
                Email: chengchi@sjtu.edu.cn.
                Supported by National Key R\&D Program of China under grant No. 2022YFA1006400 and National Natural Science Foundation of China No. 12571376.
        }
        \qquad
        Yan Wang\thanks{School of Mathematical Sciences, Shanghai Jiao Tong University, 800 Dongchuan Road, Shanghai 200240, China.
                Email: yan.w@sjtu.edu.cn.
                Supported by National Key R\&D Program of China under grant No. 2022YFA1006400, National Natural Science Foundation of China under grant No. 12571376.
        }
}
\date{}
\maketitle

\begin{abstract}
        For integers $n\ge s\ge2$, let $e(n,s)$ denote the maximum size of a family $\F\subseteq2^{[n]}$ with no $s$ pairwise disjoint members.
        The problem of determining $e(n,s)$, now called the Erd\H{o}s--Kleitman problem, is the non-uniform analogue of the Erd\H{o}s matching conjecture.
        We prove that for every fixed $m\ge3$, there exist constants $\beta_m$ and $\delta_m$ such that for sufficiently large $s$, the extremal families for $e(ms+c,s)$ are
        \[
                \mathcal P'(m,s,\ell;L')\coloneqq \binom{L'}m\cup\binom{[ms+c]}{\ge m+1}
        \]
        for some $L'$ with $\ell=s-c$ and $|L'|=m\ell-1$, when $\beta_m s^{(m-1)/m}\le c\le \delta_m s$.
        This determines the extremal families in an unknown range when $\ell$ is large, complementing our earlier work on the range when $\ell$ is small.

        Moreover, for $m=3$, we sharpen this to the asymptotically optimal range.
        Let
        \[
                t(s)=
                \frac{17-18s+\sqrt{49-852s+1284s^2}}{20}
                =0.8916\cdots s+O(1)
        \]
        We prove that \(\mathcal P'(3,s,\ell;L')\) is the unique extremal family when $t(s)<\ell<s-((4/3)^{1/3}+o(1))s^{2/3}$.
        Note that the lower bound \(t(s)\) of $\ell$ is exact, while the the constant \((4/3)^{1/3}\) in the upper bound of $\ell$ is best possible.

        Kupavskii and Sokolov introduced four candidate extremal families and conjectured that the value of $e(n,s)$ is the maximum of their sizes.
        We disprove this conjecture by constructing a new family $\mathcal R(m,s,\ell)$ that is larger than each of their four proposed candidates when $\alpha_{\mathrm R}s^{1/2}\le c\le \beta_{\mathrm R}s^{(m-1)/m}$ for some constants $\alpha_{\mathrm R}$ and $\beta_{\mathrm R}$.
        This also shows that the exponent $(m-1)/m$ in the first result is tight.

\end{abstract}

\section{Introduction}
Let $ [n]\coloneqq \{1,2,\ldots,n\} $.
For a set $X$ and an integer $k$, write $\binom Xk$ for the family of all $k$-element subsets of $X$, and write $\binom X{\ge k}\coloneqq \bigcup_{i\ge k}\binom Xi$.
For an integer $n$, we also write $\binom n{\ge k}\coloneqq \sum_{i=k}^n \binom ni$.
A \emph{matching} in a family is a collection of pairwise disjoint sets.
An \emph{$s$-matching} is a matching of size $s$.
For a family $\F$, let $\nu(\F)$ denote its matching number.

For integers $n\ge s\ge2$, define
\[
        e(n,s)
        \coloneqq
        \max\{ |\F|:\F\subseteq2^{[n]}\text{ and }\nu(\F)<s\}.
\]
The problem of determining $e(n,s)$, now known as the Erd\H{o}s--Kleitman problem, goes back to Erd\H{o}s \cite{ErdosKleitman1974,ErdosKoRado}.
Kleitman determined $e(ms-1,s)$ and $e(ms,s)$ for all $m,s\ge1$ \cite{Kleitman}.
Apart from these two residue classes, the exact value of $e(n,s)$ is more sensitive and may depend on the position of $n$ between consecutive multiples of $s$.
Thus, write $n=ms+c=(m+1)s-\ell$ for some integers $c,\ell$ with $c+\ell=s$ and $0\le c<s$ from now on.

The Erd\H{o}s--Kleitman problem may be viewed as the non-uniform counterpart of the Erd\H{o}s matching conjecture (EMC, for short).
For integers $N,k,t$ with $N\ge kt$, define
\[
        e_k(N,t)
        \coloneqq
        \max\left\{|\G|:\G\subseteq\binom{[N]}k
        \text{ and } \nu(\G)<t\right\}.
\]
The Erd\H{o}s matching conjecture asserts that
\[
        e_k(N,t)
        =
        \max\left\{
        \binom Nk-\binom{N-t+1}k,\,
        \binom{kt-1}k
        \right\}.
\]
Erd\H{o}s proved the conjecture for every fixed $k$ and $t$ when $N$ is sufficiently large \cite{ErdosEMC}.
For $k=2$, EMC is known by a theorem of Erd\H{o}s and Gallai \cite{ErdosGallai}.
For $k=3$, it was proved for sufficiently large $N$ by \L{}uczak and Mieczkowska \cite{LuczakMieczkowska}, and was later completely solved by Frankl \cite{FranklDAM}.
For $k\ge4$, the conjecture remains open in general, although several important ranges and variants are known; see, for example, \cite{Frankl2017NewRange,FK2022,FranklLuMaWu2026,FranklWangOverlapping,GaoLuMaYu2022,KupavskiiRainbow}.

The connection between the Erd\H{o}s--Kleitman problem and EMC is captured by the following construction.
If $\G\subseteq\binom{[n]}m$ satisfies $\nu(\G)<\ell$, then $\G\cup\binom{[n]}{\ge m+1}$ has no $s$-matching.
Indeed, any $s$-matching using $q$ sets from $\G$ has total size at least $qm+(s-q)(m+1)=(m+1)s-q$.
Since $n=(m+1)s-\ell$, this implies $q\ge\ell$, contradicting $\nu(\G)<\ell$.
For $L'\in\binom{[n]}{m\ell-1}$, applying this construction with $\G=\binom{L'}m$ gives an admissible family
\[
        \mathcal P'(m,s,\ell;L')
        \coloneqq
        \binom{L'}m\cup\binom{[n]}{\ge m+1}.
\]
Write $\mathcal P'(m,s,\ell)$ for $\mathcal P'(m,s,\ell;[m\ell-1])$.
The size of $\mathcal P'(m,s,\ell;L')$ is independent of the choice of $L'$, and we denote it by $|\mathcal P'(m,s,\ell)|$.

However, $\mathcal P'(m,s,\ell)$ does not dominate in all ranges.
When $\ell$ is small, a different construction gives larger families that contain sets of size strictly less than $m$.
For $L\in\binom{[n]}{\ell-1}$, define
\[
        \mathcal P(m,s,\ell;L)
        \coloneqq
        \{A\subseteq[n]: |A|+|A\cap L|\ge m+1\}.
\]
Write $\mathcal P(m,s,\ell)=\mathcal P(m,s,\ell;[\ell-1])$.
Again, the size is independent of the choice of $L$.
The family $\mathcal P(m,s,\ell;L)$ has matching number strictly less than $s$.
In fact, if $A_1,\ldots,A_s$ are pairwise disjoint members of $\mathcal P(m,s,\ell;L)$, then a contradiction follows from
\begin{equation}\label{equ:P-less-than-s}
        s(m+1)-1=n+|L|
        \ge
        \sum_{i=1}^s \bigl(|A_i|+|A_i\cap L|\bigr)
        \ge s(m+1).
\end{equation}
Kupavskii and Sokolov gave two further constructions:
\[
        \begin{aligned}
                \mathcal Q(m,s,\ell)
                 & \coloneqq\{E\in 2^{[n]}\colon |E|+|E\cap[ms-c-1]|\ge2m\}, \\
                \mathcal W(m,s,\ell)
                 & \coloneqq\{E\in 2^{[n]}\colon |E\cap [ms-1]|\ge m\}.
        \end{aligned}
\]
The same counting argument as in \eqref{equ:P-less-than-s} shows that both of them have matching number less than $s$.

These four constructions motivated Kupavskii and Sokolov to propose the following conjecture.
\begin{conjecture}[Kupavskii and Sokolov \cite{KupavskiiSokolovOtherEnd}]\label{conj}
        For every $n\ge s$, writing $n=ms+c=(m+1)s-\ell$ with $0\le c<s$, we have
        \[
                e(n,s)=
                \max\{
                |\mathcal P(m,s,\ell)|,
                |\mathcal P'(m,s,\ell)|,
                |\mathcal Q(m,s,\ell)|,
                |\mathcal W(m,s,\ell)|
                \}.
        \]
\end{conjecture}
Kupavskii and Sokolov proved the conjecture for $m=2$ \cite{KupavskiiSokolov2025}, and also in a range where $c$ is sufficiently small relative to $s$ \cite{KupavskiiSokolovOtherEnd}.
In \cite{ChiWangK1}, the authors of this paper showed that, for every fixed $m\ge3$, the unique extremal family up to isomorphism is $\mathcal P(m,s,\ell)$ when $1\le \ell\le (\frac{m+1}{2m+1}-o(1))s$.
This answers another conjecture of Frankl and Kupavskii \cite{FKnonuniform} in a strong sense and also confirms \cref{conj} in this range.

This paper studies the other range of $\ell$ for the Erd\H{o}s-Kleitman problem.
Our first contribution is to prove that for $m\ge3$ and sufficiently large $s$, the family $\mathcal P'(m,s,\ell;L')$ is the unique extremal family for $e(ms+c,s)$ when $c=s-\ell$ is moderately small compared to $s$.

\begin{theorem}\label{thm:main}
        Fix $m\ge3$.
        There exist constants $\beta_m,\delta_m>0$ and an integer $s_0=s_0(m)$ such that the following holds for all $s\ge s_0$.
        Let $n=ms+c,\ell=s-c$ and $\beta_m s^{(m-1)/m}\le c\le \delta_m s$.
        If $\F\subseteq 2^{[n]}$ satisfies $\nu(\F)<s$, then $|\F|\le |\mathcal P'(m,s,\ell)|$.
        Moreover, equality holds if and only if $\F=\mathcal P'(m,s,\ell;L')$ for some $L'\in\binom{[n]}{m\ell-1}$.
\end{theorem}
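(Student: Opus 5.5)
We will follow the standard shifting-plus-stability scheme. First, by the usual down-shifting and left-shifting (Frankl's $S_{ij}$ operations), which do not increase the matching number, we may assume $\F$ is an up-set and shifted. For an extremal family this is harmless. Uniqueness is recovered at the end by checking that, if a shifted image equals $\mathcal P'(m,s,\ell)$, then the original family equals some $\mathcal P'(m,s,\ell;L')$.

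Write $\F_k=\F\cap\binom{[n]}k$. The key structural lemma we aim for is a weighted counting bound. Any $s$-matching uses total size at most $n=(m+1)s-\ell$, so the levels below $m+1$ must be sparse. Concretely, we want
\[
\sum_{k\le m}\Bigl(\textstyle|\F_k|-\mathbf 1_{k=m}\binom{m\ell-1}{m}\Bigr)\le \sum_{k\ge m+1}\Bigl(\binom nk-|\F_k|\Bigr),
\]
with equality only for $\mathcal P'$. The proof of this lemma splits into cases.

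\textbf{Case 1: no small sets.} If $\F$ has no set of size $<m$, then $\F_m$ must satisfy $\nu(\F_m\cup\text{available})$-type constraints. Greedily extending a matching in $\F_m$ by $(m+1)$-sets, which exist in abundance since $c\ge\beta_m s^{(m-1)/m}$ leaves enough room, shows the following. Either $\nu(\F_m)<\ell$, and then the EMC-type bound $e_m(n,\ell)=\binom{m\ell-1}m$ applies (valid here since $n\le(m+\delta')m\ell$ lies in the range where Frankl's theorem gives the clique bound for $\ell$ large and $n<(m+1)\ell$-ish), with uniqueness from its stability version. Or missing $(m+1)$-sets must compensate. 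The compensation is quantified by a Kruskal--Katona/shadow-type argument: each extra $m$-set beyond the clique forces $\Omega$ many missing $(m+1)$-sets.

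\textbf{Case 2: sets of size $<m$ are present.} Each set of size $m-j$ "saves" $j$ elements in a matching. Using shiftedness, we compare with $\mathcal P$-type constructions. We show that their loss at level $m+1$, roughly $\binom{n-\ell+1}{m}$-scale deletions relative to gain, dominates. This is exactly where $c\le\delta_m s$ is used, because $\mathcal P(m,s,\ell)$ wins when $\ell$ is small. Here we invoke the comparison $|\mathcal P(m,s,\ell)|<|\mathcal P'(m,s,\ell)|$ together with a stability statement: a near-extremal $\F$ with many small sets is close to $\mathcal P$, hence smaller.

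The main obstacle is the lower threshold $c\ge\beta_m s^{(m-1)/m}$. There, families mixing a clique-like $\F_m$ slightly larger than $\binom{m\ell-1}m$ with a few missing $(m+1)$-sets, like the $\mathcal R$ construction announced in the abstract, nearly tie. We plan to handle this by fixing a maximum matching $M$ of $\F_m$ of size $\ell-1+t$. We then count the $(m+1)$-sets that would complete an $s$-matching with $M$ on the remaining $c-t$ free vertices... more precisely, on the complement of $M$ one needs $s-\ell-t+1$ disjoint $(m+1)$-sets inside $n-m(\ell-1+t)$ vertices. The missing sets then form a family without such a matching, bounded via Frankl's EMC bound, giving at least $\binom{n}{m+1}$-proportional deficits of order $t\,c^{m}$. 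The gain in $|\F_m|$ is at most $t\binom{n}{m-1}\approx t s^{m-1}$. Comparing $tc^m$ against $ts^{m-1}$ yields exactly $c\gtrsim s^{(m-1)/m}$, which fixes $\beta_m$. Getting the constants and the degenerate case $t$ large (handled by the $\delta_m$ regime and a crude averaging bound) to line up is where we expect most of the work.
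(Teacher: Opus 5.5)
\textbf{There is a genuine gap in your Case~2 (sets of size less than $m$).}

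Your treatment of $\nu(\F_m)=\ell+d\ge\ell$ essentially matches the paper's argument for that regime. You take a maximum matching of $\F_m$; the leftover $W$ has $|W|=(m+1)(c-d)+d$ and carries no $(c-d)$-matching of $(m+1)$-sets of $\F$. A deficit of order $(d+1)c^m$ is then played against a gain $O\bigl((d+1)\binom n{m-1}\bigr)$. Shifting is never actually needed.

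The gap is the case $\nu(\F_m)<\ell$ when small sets are present. There EMC gives only $|\F_m|\le\binom{m\ell-1}m$, and you still have to pay for $|\F_{<m}|\le m\binom n{m-1}$. This cost is real: already $\binom{L'}m\cup\binom{[n]}{\ge m+1}\cup\{E\}$ with $|E|<m$ contains an $s$-matching. Your mechanism does not address it. In this range $\mathcal P(m,s,\ell)$ trails $\mathcal P'(m,s,\ell)$ by order $s^m$, while the quantity to be paid is $O(s^{m-1})$. So \emph{closeness to $\mathcal P$} is not the relevant dichotomy, and the stability statement you invoke is neither formulated nor available.

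The missing idea is a local dichotomy around a single small set. Fix $E\in\F_j$ with $1\le j\le m-1$ and put $p=\ell+j-m-1$, so that $1+p+(c+m-j)=s$ and $n-j-mp=(m+1)(c+m-j)$.
\begin{itemize}
\item If the $m$-sets of $\F$ avoiding $E$ contain a $p$-matching, the leftover has exactly $(m+1)\tau$ vertices, with $\tau=c+m-j$, and contains no $\tau$-matching of $(m+1)$-sets of $\F$. Averaging over partitions then shows that at least $\tau^{-1}\binom{(m+1)\tau}{m+1}\ge c^m\ge\beta_m^m s^{m-1}$ of them are missing, which covers $|\F_{<m}|$. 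So the lower threshold on $c$ is needed here too, not only against $\mathcal R$-type families.
\item Otherwise, Frankl's theorem on $[n]\setminus E$ (still inside its window) gives $|\F_m|\le j\binom n{m-1}+\binom{mp-1}m$. Since $\binom{m\ell-1}m-\binom{mp-1}m$ is about $m(m+1-j)\binom n{m-1}>(m+j)\binom n{m-1}$, you get $|\F_{<m}|+|\F_m|<\binom{m\ell-1}m$ outright. The offset $m+1-j$ in $p$ is exactly what makes this comparison work.
\end{itemize}

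\textbf{Further corrections.}
\begin{itemize}
\item The hypothesis $c\le\delta_m s$ is not there to beat $\mathcal P$. The clique bound $\binom{mt-1}m$ is available (Frankl) only for $mt\le N<(m+\eta_m)t$ with $\eta_m=1/(2m^{2m+1})$, nowhere near $(m+1)\ell$. The constant $\delta_m$ is chosen so that the relevant ground sets stay in this window for $t=\ell$, $t=\ell+d+1$ and $t=p$.
\item For large $d$, averaging does not suffice. If $d$ and $c-d$ are both of order $c$, the bound $\frac1{c-d}\binom{|W|}{m+1}$ is only of order $c^m$, below the gain of order $c\,s^{m-1}$. What works is that a maximum matching of the present $(m+1)$-sets in $W$ leaves at least $d+m+1$ vertices uncovered. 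All $(m+1)$-subsets of those vertices are missing, which gives $\binom{d+m+1}{m+1}$, of order $c^{m+1}$.
\item An extremal family containing $\emptyset$ is not an up-set. Before your reduction you must remove $\emptyset$, for example by swapping it for a missing singleton, and the uniqueness statement must also exclude this case.
\end{itemize}
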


The lower bound $c\ge\beta_m s^{(m-1)/m}$ in \cref{thm:main} is tight up to a multiplicative constant.
To show this, we introduce the following construction:
\[
        \mathcal R(m,s,\ell)
        \coloneqq
        \{E\in 2^{[n]}:(m-1)|E|+|E\cap[ms-(m-1)c-1]|\ge m^2\}.
\]
This family $\mathcal R(m,s,\ell)$ has matching number less than $s$.
In fact, if $A_1,\ldots,A_s$ are disjoint sets in $\mathcal R(m,s,\ell)$, then a contradiction follows from
\[
        sm^2-1=(m-1)n+\bigl(ms-(m-1)c-1\bigr)\ge\sum_{i=1}^s \bigl((m-1)|A_i|+|A_i\cap[ms-(m-1)c-1]|\bigr)\ge sm^2.
\]
For $c$ below the order $s^{(m-1)/m}$, this construction $\mathcal R(m,s,\ell)$ is larger than $\mathcal P'(m,s,\ell)$, see \cref{thm:conj}.

For $m=3$, we obtain a sharper result which identifies the asymptotically optimal range of $\mathcal P'$.
There are two natural transition points of $\ell$.
The first is the exact value of $\ell$ at which $\mathcal P(3,s,\ell)$ and $\mathcal P'(3,s,\ell)$ have the same size.
Let $t(s)$ be the root of $|\mathcal P(3,s,\ell)|=|\mathcal P'(3,s,\ell)|$ with $t(s)>1$, namely
\[
        t(s)
        \coloneqq
        \frac{17-18s+\sqrt{49-852s+1284s^2}}{20}
        =0.8916\ldots s+O(1).
\]
In \cite{ChiWangK1}, the authors of this paper showed that $\mathcal P(3,s,\ell)$ is the unique extremal family when $1\le\ell<t(s)$, while $\mathcal P(3,s,\ell)$ and $\mathcal P'(3,s,\ell)$ are both extremal when $t(s)$ is an integer and $\ell=t(s)$.
Moreover, $|\mathcal P'(3,s,\ell)|>|\mathcal P(3,s,\ell)|$ when $\ell>t(s)$.

The second transition point arises due to the family $\mathcal R(m,s,\ell)$.
Let $\alpha=(4/3)^{1/3}$, and let $r(s)$ be the root in $(0,s)$ of $|\mathcal P'(3,s,\ell)|=|\mathcal R(3,s,\ell)|$.
A direct calculation shows that $s-r(s)=\alpha s^{2/3}+o(s^{2/3})$.
Moreover, $|\mathcal P'(3,s,\ell)|>|\mathcal R(3,s,\ell)|$ for $\ell<r(s)$, whereas the reverse inequality holds for $\ell>r(s)$.

The second contribution of this paper is to determine the asymptotic range for which $\mathcal P'(3,s,\ell)$ is the unique extremal family for $e(n,s)$.

\begin{theorem}\label{thm:m3-high-range}
        For every $\varepsilon>0$, there exists an integer $s_0=s_0(\varepsilon)$ such that the following holds for all $s\ge s_0$.
        Suppose $n=4s-\ell$ with $t(s)<\ell<s-(\alpha+\varepsilon)s^{2/3}$.
        If $\F\subseteq 2^{[n]}$ satisfies $\nu(\F)<s$, then $|\F|\le|\mathcal P'(3,s,\ell)|$.
        Moreover, equality holds if and only if $\F=\mathcal P'(3,s,\ell;L')$ for some $L'\in\binom{[n]}{3\ell-1}$.
\end{theorem}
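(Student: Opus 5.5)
We prove \cref{thm:m3-high-range} by reducing to a shifted family and then comparing against the candidate constructions through a weighted counting of the ``deficient'' layers. Standard shifting preserves $|\F|$ and does not increase $\nu(\F)$, so we may assume $\F$ is shifted and, by maximality, an up-set. Write $\F_k=\F\cap\binom{[n]}k$. The constraint $\nu(\F)<s$ with $n=4s-\ell$ forces the sets of size at most $3$ to be ``scarce'': any $s$-matching using $q_i$ sets of size $i$ ($i\le3$) has total size at least $4s-\sum_{i\le 3}(4-i)q_i$. The plan is to show that $\F$ must essentially omit $\F_{\le2}$, and that $\F_3$ must behave like an EMC family with $\nu(\F_3)<\ell$, whose size is then bounded by Frankl's theorem for $k=3$, namely $\binom{3\ell-1}3$, since $\ell$ is large relative to $n$. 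The idea is to use $e_3(N,\ell)=\max\{\binom N3-\binom{N-\ell+1}3,\binom{3\ell-1}3\}$ with $N=n$, which lies in the clique regime since $n\le \frac43\cdot 3\ell$ roughly when $\ell>t(s)\approx0.89s$.

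The key steps, in order, are as follows. First, if $\F$ contains some small set, bound the loss in $\binom{[n]}{\ge4}$: a set $B$ of size at most $2$ in $\F$ forces, after removing it, a residual problem on $n-|B|$ points with $s-1$ sets. An averaging/Katona-circle type argument over random partitions of $[n]$ into $s-1$ blocks of sizes $4,\dots$ and a few blocks of size $3$ should then show that the number of missing $4$- and $5$-sets exceeds the gain $|\F_{\le 2}|+|\F_3|-\binom{3\ell-1}3$. This is exactly the comparison that produces the $\mathcal P$ versus $\mathcal P'$ threshold $t(s)$, and we can import the structural part of \cite{ChiWangK1}, which already handles $\mathcal P$, to cover families close to $\mathcal P(3,s,\ell)$. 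Second, assuming $\F_{\le2}=\emptyset$, take a maximum matching $M$ of $\F_3$ of size $q$. If $q\ge\ell$, then using the shiftedness, the sets of size $\ge4$ disjoint from the cover must be sparse, and we count missing $4$-sets through the rule ``each $3$-set in a matching of size $\ge\ell$ kills a perfect completion''. Here the relevant competitor is $\mathcal R(3,s,\ell)$, which trades missing $4$-sets against extra $3$-sets intersecting the set $[3s-2c-1]$ in at least one element. Third, if $q<\ell$, apply Frankl's EMC theorem to get $|\F_3|\le\binom{3\ell-1}3$ with equality only for a clique, which gives the bound and its uniqueness directly.

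The main obstacle is the second step near the upper end $\ell\approx s-\alpha s^{2/3}$. There $|\mathcal P'|-|\mathcal R|$ is of lower order, so crude averaging fails. The plan is a stability argument: parametrize $\F_3$ by the number $x$ of its $3$-sets meeting the ``extra'' region and $\F_4$ by the number $y$ of missing $4$-sets. We then derive from a Kleitman-type partition count an inequality of the form $y\gtrsim x^{3/2}\cdot\Theta(1)$ (rather than linear), and optimize $\binom{3\ell-1}{3}+x-y$. The optimum is attained at $x=0$ precisely when $c>(\alpha+\varepsilon)s^{2/3}$, since the cubic $\binom{3\ell-1}{3}$-versus-$\mathcal R$ balance yields the constant $(4/3)^{1/3}$. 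Getting the sharp constant in this isoperimetric-type inequality, presumably via Kruskal--Katona applied to shadows of the $3$-sets outside the clique, is where we expect the real work to lie.
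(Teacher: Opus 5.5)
Your outline leaves both essential steps unproved, and the inequality you propose for the hardest step is false.

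\textbf{The case $\nu(\F_3)\ge\ell$.} Here you aim for $y\gtrsim x^{3/2}$, with $x$ the number of triples beyond the clique and $y$ the number of missing $4$-sets. But $\mathcal R(3,s,\ell)$ itself has $\nu<s$. It contains $x=\binom{3\ell+c-1}{3}-\binom{3\ell-1}{3}\approx\frac92c\ell^2$ such triples and misses only $y=\binom{3c+1}{4}\approx\frac{27}{8}c^4$ four-sets. Since $c\le 0.11s$, we have $x^{3/2}\asymp c^{3/2}\ell^3\gg c^4$, so the inequality fails. An inequality with absolute constants could not produce a $c$-dependent threshold anyway.

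The true trade-off is linear in the excess $d$, where $\nu(\F_3)=\ell+d$. The gain is at most $S_d=\binom{3\ell+3d+2}{3}-\binom{3\ell-1}{3}\approx\frac{27}{2}(d+1)\ell^2$, after checking that the clique term wins in Frankl's EMC bound for every $d\le c-1$. The loss above layer $3$ must beat this. For $d$ up to about $c/3$ that comes down to a lower bound of the form $d\,c^3g(d/c)$, with $g$ decreasing and $g(1/3)=\frac{81}{8}$. Balancing $\frac{27}{2}s^2$ against $\frac{81}{8}c^3$ at $d\approx c/3$ (exactly the $\mathcal R$ configuration) is what gives $c^3=\frac43 s^2$.

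The paper needs a different tool in each range:
\begin{itemize}
\item For $d\le\rho_0c$: \cref{cor:m3-nonuniform-terminal-blocker}, which comes from Frankl--Kupavskii $Q$-dependence.
\item For $\rho_0c<d\le x_1c$: a dichotomy on the $4c-3d$ leftover vertices $W$. Either a further $q$-matching of $4$-sets exists, and \cref{lem:p-ordered-local-blocker} gives a loss of order $sc^3$. Or $|\F_4\cap\binom W4|\le(q-1)\binom{|W|}{3}$ by \cref{thm:frankl-shadow-matching}, which is where the sharp constant appears.
\item For $d>x_1c$: \cref{lem:p-ordered-local-blocker} again.
\end{itemize}
Your sketch also assumes $\F_{\le2}=\emptyset$ here. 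For small $d$, the interaction between a small set $E$ and $\F_3(\overline E)$ must be handled separately.

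\textbf{Step~1.} This is not an argument, and its mechanism fails where it matters. Families near $\mathcal P(3,s,\ell)$ contain $\binom{\ell-1}{2}=\Theta(s^2)$ two-sets yet miss no set of size $\ge4$. So no averaging over partitions into blocks of size $\ge4$ can pay for $\F_{\le2}$. You must prove directly that $|\F_1|+|\F_2|+|\F_3|<\binom{3\ell-1}{3}$ in that situation. This is precisely where the exact threshold enters, through $\binom{\ell-1}{2}+\binom n3-\binom{n-\ell+1}{3}<\binom{3\ell-1}{3}\iff\ell>t(s)$.

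The paper obtains this by splitting on $\tau(\F_3)$:
\begin{itemize}
\item If $\tau(\F_3)\le\ell-1$, small sets are classified as good or bad relative to a cover; if all are bad, one gets exactly the bound above.
\item If $\nu(\F_3)<\ell\le\tau(\F_3)$, it uses the Guo--Lu--Mao theorem.
\item For a remaining small set $E$, it splits on whether $\F_3(\overline E)$ has an $(\ell+|E|-4)$-matching. If it does, the $4(c+3-|E|)$ leftover vertices force at least $(\frac{32}{3}+o(1))c^3>4s^2>|\F_1|+|\F_2|$ missing $4$-sets. If not, EMC or Guo--Lu--Mao bounds for $\F_3(\overline E)$ with parameters $(n-|E|,\ell+|E|-4)$, combined with \cref{clm:m3-shifted-estimates}, close the case.
\end{itemize}
Appealing to ``the structural part of \cite{ChiWangK1}'' does not supply this, since that work treats $\ell<t(s)$. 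Also, ``essentially omit $\F_{\le2}$'' has to be an exact inequality, not an approximate one.

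\textbf{Shifting.} Shifting preserves $|\F|$ and does not increase $\nu$, but it does not carry the equality characterization back to unshifted families. You would need a separate argument for uniqueness; the paper avoids this by never shifting.
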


Our third contribution is to show that the construction $\mathcal R(m,s,\ell)$ is not only a barrier for extending the range for which $\mathcal P'(m,s,\ell)$ is extremal, but also a counterexample to \cref{conj}.
\begin{theorem}\label{thm:conj}
        Write $n=ms+c$.
        For every fixed $m\ge3$, there exist constants $\alpha_{\mathrm R}=\alpha_{\mathrm R}(m)>0$ and $\beta_{\mathrm R}=\beta_{\mathrm R}(m)>0$ such that \cref{conj} does not hold whenever $s$ is sufficiently large and $\alpha_{\mathrm R}s^{1/2}\le c\le \beta_{\mathrm R}s^{(m-1)/m}$.
\end{theorem}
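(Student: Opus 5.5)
The plan is to show that, for $\alpha_{\mathrm R}s^{1/2}\le c\le \beta_{\mathrm R}s^{(m-1)/m}$ and $s$ large, $|\mathcal R(m,s,\ell)|$ is strictly larger than each of $|\mathcal P|,|\mathcal P'|,|\mathcal Q|,|\mathcal W|$. Since $\nu(\mathcal R(m,s,\ell))<s$ was already checked above, this gives $e(n,s)\ge|\mathcal R|$ and hence refutes \cref{conj}. All comparisons are made relative to the baseline $\binom{n}{\ge m+1}$. Only a few layers near $m$ differ, so each difference reduces to an explicit polynomial in $c$ and $s$, and we compare leading orders.

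\textbf{Step 1: compute $|\mathcal R|$.} Put $K=ms-(m-1)c-1$. In $\mathcal R$:
\begin{itemize}[nosep]
\item all sets of size $\ge m+2$ are present, since $(m-1)(m+2)\ge m^2$;
\item a set of size $m+1$ is present iff it meets $[K]$;
\item a set of size $m$ is present iff it lies in $[K]$;
\item no set of size $\le m-1$ is present, since that would need $|E\cap[K]|\ge 2m-1$.
\end{itemize}
Hence
\[
|\mathcal R|-\binom n{\ge m+1}=\binom Km-\binom{(2m-1)c+1}{m+1}.
\]

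\textbf{Step 2: $\mathcal R$ versus $\mathcal P'$ and $\mathcal P$.} We have $|\mathcal P'|-\binom n{\ge m+1}=\binom{ms-mc-1}{m}$. The two sets $[ms-mc-1]\subset[K]$ differ by $c$ elements, so
\[
|\mathcal R|-|\mathcal P'|\ \ge\ c\binom{ms-mc-1}{m-1}-\binom{(2m-1)c+1}{m+1}\ \ge\ a_m c s^{m-1}-b_m c^{m+1}
\]
for positive constants $a_m,b_m$. This is positive once $c\le\beta_{\mathrm R}s^{(m-1)/m}$ with $\beta_{\mathrm R}$ small. For $\mathcal P$ it suffices to show $|\mathcal P'|>|\mathcal P|$ when $c=o(s)$, by comparing the $s^m$ coefficients. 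The $m$-layer of $\mathcal P$ consists of the $m$-sets meeting $L$, and there are $\binom nm-\binom{n-\ell+1}m\approx\frac{m^m-(m-1)^m}{m!}s^m$ of them. The lower layers of $\mathcal P$ contribute only $O(s^{m-1})$, because each of their sets needs at least two elements of $L$ relative to its size. Meanwhile $\mathcal P'$ has $\approx\frac{m^m}{m!}s^m$ sets of size $m$ and contains the full higher layers. So $|\mathcal P'|-|\mathcal P|=\Omega(s^m)$.

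\textbf{Step 3: $\mathcal R$ versus $\mathcal W$.} Compared with the baseline, $\mathcal W$ gains $\binom{ms-1}{m}$ sets of size $m$ (and a few with $|E|<m$ is impossible). It loses every set of size $\ge m+1$ with exactly $m-1$ elements inside $[ms-1]$ and at least two elements among the $c+1$ outside. That loss is at least
\[
\binom{ms-1}{m-1}\bigl(2^{c+1}-c-2\bigr).
\]
For $c\ge\alpha_{\mathrm R}s^{1/2}$ this is exponentially larger than the polynomial gain $\binom{ms-1}m-\binom Km=O(cs^{m-1})$ of $\mathcal W$ over $\mathcal R$ in the $m$-layer. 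Hence $|\mathcal W|<|\mathcal R|$.

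\textbf{Step 4: $\mathcal R$ versus $\mathcal Q$.} This is the main obstacle, and it is where the lower bound $c\ge\alpha_{\mathrm R}s^{1/2}$ comes from. Put $M=ms-c-1$, so $n-M=2c+1$. In $\mathcal Q$:
\begin{itemize}[nosep]
\item a set of size $m$ is present iff it lies in $[M]$;
\item no set of size $<m$ is present;
\item a set of size $m+j$ is present iff it has at most $2j$ elements outside $[M]$.
\end{itemize}
Thus $\mathcal Q$ beats $\mathcal R$ in the $m$-layer by $\binom Mm-\binom Km\le (m-2)c\binom{ms}{m-1}=O(cs^{m-1})$. On the other hand, its loss in the $(m+1)$-layer alone is at least $\binom{2c+1}{3}\binom{M}{m-2}\ge a'_m c^3s^{m-2}$. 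Therefore
\[
|\mathcal R|-|\mathcal Q|\ \ge\ a'_mc^3s^{m-2}-O(cs^{m-1})-\binom{(2m-1)c+1}{m+1}.
\]
When $c\ge\alpha_{\mathrm R}s^{1/2}$ with $\alpha_{\mathrm R}$ large, we have $c^3s^{m-2}\ge\alpha_{\mathrm R}^2\,cs^{m-1}$, so the first term beats the middle one. Since $c\le\beta_{\mathrm R}s^{(m-1)/m}=o(s)$, we also get $c^{m+1}=o(c^3s^{m-2})$, so the first term beats the last one. The care needed here is only in fixing explicit constants in these binomial estimates, for example $\binom Mm-\binom Km\le (M-K)\binom{M}{m-1}$ together with $\binom{M}{m-2}\ge(1-o(1))(ms)^{m-2}/(m-2)!$. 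Choosing $\alpha_{\mathrm R}$ large and $\beta_{\mathrm R}$ small (and $s$ large so the range is nonempty) completes the proof.
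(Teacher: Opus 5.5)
Your proof is correct, but it is organized differently from the paper's.

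\textbf{The paper's route.} The paper works with complements. From the exact formula for $|\mathcal R(m,s,\ell)^c|$ it extracts one uniform bound, $|\mathcal R(m,s,\ell)^c|\le K_0s^{m-1}c$ for $c\le s^{(m-1)/m}$. For each of $\mathcal P$, $\mathcal Q$, $\mathcal W$ it then exhibits one explicit large subfamily of that competitor's complement:
\begin{itemize}
\item $\binom{[n]\setminus L}{m}$, of order $s^m$, for $\mathcal P$;
\item $\binom{V_1}{2m-1}$, of order $c^{2m-1}$, for $\mathcal Q$;
\item $2^{c+1}\binom{ms-1}{m-1}$ for $\mathcal W$.
\end{itemize}
Only $\mathcal P'$ is handled by an exact difference, just as in your Step 2.

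\textbf{Your route.} You do layer-by-layer bookkeeping relative to $\binom{n}{\ge m+1}$, weighing each competitor's gain in the $m$-layer against its losses above it. You handle $\mathcal P$ by transitivity through $\mathcal P'$. This works because $|\mathcal P'|-|\mathcal P|=\Omega(s^m)$ when $c=o(s)$, as you argue.

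\textbf{The genuinely different step is $\mathcal Q$.} You charge its $m$-layer advantage, $\binom Mm-\binom Km=O(cs^{m-1})$ (its $m$-sets live on a ground set larger by $(m-2)c$ points), against the $(m+1)$-sets with three points outside $[M]$, of order $c^3s^{m-2}$. The paper instead compares $c^{2m-1}$ with $K_0s^{m-1}c$. Both comparisons break even exactly at $c\asymp s^{1/2}$, so both give the same threshold $\alpha_{\mathrm R}s^{1/2}$.

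\textbf{What each buys.} The paper reuses a single estimate on $|\mathcal R^c|$ for three comparisons. Yours avoids the case analysis over admissible pairs $(i,j)$, and makes transparent that the $s^{1/2}$ barrier is an adjacent-layer competition between the $m$- and $(m+1)$-layers of $\mathcal Q$.

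\textbf{A slip in Step 1.} Since $n-K=mc+1$, not $(2m-1)c+1$, the correct identity is $|\mathcal R(m,s,\ell)|-\binom{n}{\ge m+1}=\binom Km-\binom{mc+1}{m+1}$. This is harmless: $\binom{(2m-1)c+1}{m+1}\ge\binom{mc+1}{m+1}$, and the term is only ever subtracted in lower bounds for $|\mathcal R|$ minus a competitor, so all your later inequalities stay valid and the term is still $O(c^{m+1})$.

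\textbf{A small omission in Step 3.} You should also subtract this $O(c^{m+1})$ term when comparing with $\mathcal W$, but it is trivially dominated by $2^c s^{m-1}$.
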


The rest of the paper is organized as follows.
In \cref{sec:tools}, we collect the notation and auxiliary results.
In \cref{sec:comparison}, we set up the layer-by-layer comparison with the family $\mathcal P'(m,s,\ell;L')$.
In \cref{sec:general-proof}, we prove \cref{thm:main}.
In \cref{sec:m3-high}, we prove \cref{thm:m3-high-range}.
In \cref{sec:counterexample}, we prove \cref{thm:conj}.
The technical estimates used in the proof of \cref{sec:m3-high} are collected in the appendix.

\section{Definitions and lemmas}\label{sec:tools}

In this section we collect some lemmas and theorems used later.
For a family $G$, let $\tau(G)$ denote its \emph{vertex-cover number}.

For $k\ge3$, for the rest of the paper, we fix
\begin{equation}\label{eq:eta-def}
        \eta_k=\frac{1}{2k^{2k+1}} \text{ and }\delta_k=\frac{\eta_k}{k+1+2\eta_k}.
\end{equation}

\begin{theorem}[Frankl \cite{Frankl2017NewRange}]\label{thm:npemc}
        For every integer $k\ge3$, there is an integer $t_k$ such that the following holds.
        If $t\ge t_k$, $kt\le N<(k+\eta_k)t$, and $\G\subseteq\binom{[N]}k$ satisfies $\nu(\G)<t$, then $|\G|\le \binom{kt-1}{k}$.
        Moreover, if equality holds, then $\G=\binom Uk$ for some $U\in\binom{[N]}{kt-1}$.
\end{theorem}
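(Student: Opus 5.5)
The plan is to reduce to shifted families and then run a counting comparison in which the few vertices outside a block of size $kt-1$ are charged against $k$-sets inside that block which are forced to be missing.

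\emph{Reduction to shifted families.} Apply the standard $(i,j)$-shifts $S_{ij}$ for $i<j$. They preserve $|\G|$ and do not increase $\nu(\G)$, so we may assume $\G$ is shifted: $G\in\G$, $j\in G$, $i\notin G$, $i<j$ imply $(G\setminus\{j\})\cup\{i\}\in\G$. For the uniqueness part we record, as usual, that if the shifted image equals $\binom{[kt-1]}k$, then some earlier family in the shifting sequence already had the form $\binom Uk$. This is checked shift by shift: if $S_{ij}(\G')=\binom{U}{k}$ with $|U|=kt-1$ and $\G'$ is not of this form, one exhibits a $t$-matching in $\G'$.

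\emph{Main comparison.} Set $X=[kt-1]$ and $Y=[N]\setminus X$, so $|Y|=N-kt+1<\eta_k t+1$. Split $\G=\G_0\cup\G_1$, where $\G_0=\G\cap\binom Xk$ and $\G_1$ consists of the members meeting $Y$. It suffices to show $|\G_1|<\bigl|\binom Xk\setminus\G_0\bigr|$ whenever $\G_1\neq\emptyset$.

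\emph{Bounding $|\G_1|$.} Use shiftedness together with $\nu(\G)<t$. Sort the members of $\G_1$ by their smallest element outside $X$. For each such $G$, shifting lets us pass to a set whose $X$-part is an initial-type segment, and its link over $Y$ has at most $|Y|$ choices per coordinate. This gives
\[
|\G_1|\le \sum_{i\ge1}\binom{|Y|}{i}\cdot\#\{\text{admissible $X$-parts of size $k-i$}\},
\]
which is $O\bigl(\eta_k t\cdot t^{k-1}\bigr)$ times a factor controlled by how far the shifted sets are from $X$.

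\emph{Bounding the missing sets.} Here $\nu<t$ is used through an averaging over perfect matchings. Suppose some $G\in\G_1$ contains $y\in Y$. Shiftedness puts many sets of the form $(G\setminus\{y\})\cup\{x\}$ into $\G$. Hence, for a random partition of $X\cup\{y\}$ (which has $kt$ elements) into $t$ blocks of size $k$, at least one block is not in $\G$. Moreover, that missing block can be taken inside $X$, since blocks containing $y$ are present by shiftedness. Averaging over random partitions yields
\[
\Bigl|\binom Xk\setminus\G_0\Bigr|\ge \frac1t\binom{kt-1}{k}\cdot(\text{correction}),
\]
which is of order $t^{k-1}$. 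A more refined version is needed: we want the number of missing sets to grow proportionally with the number of distinct "types" of sets in $\G_1$. To get this, run the matching argument separately for each small set of $Y$-vertices used, and combine via Hall-type or Katona-circle averaging.

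\emph{Main obstacle.} The hardest step is balancing these two estimates. The naive bound on $|\G_1|$ is of order $|Y|\,t^{k-1}$, while the naive bound on the missing sets is of order $t^{k-1}/k!$, so a factor like $k^{2k}$ appears. This is exactly why $\eta_k=1/(2k^{2k+1})$ is chosen, and $t\ge t_k$ absorbs lower-order terms. I would first try the crude estimates above with explicit constants. If they fall short, I would refine the count of $\G_1$ by degree: use that every vertex of $Y$ has degree at most a $1/t$-fraction of what a vertex of $X$ has, by a shifted-family matching lemma. Equality forces $\G_1=\emptyset$ and $\G_0=\binom Xk$, which gives the uniqueness statement.
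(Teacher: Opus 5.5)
The paper does not prove this statement. It quotes Frankl's theorem and uses it as a black box, so your sketch has to stand on its own, and as written it does not.

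\textbf{What is fine.} The routine parts work: the reduction to shifted families, the reformulation as $|\G_1|<\bigl|\binom Xk\setminus\G_0\bigr|$ whenever $\G_1\neq\emptyset$, and the uniqueness-through-shifting step. The last one works because the disjointness graph on the $(k-1)$-subsets of $U\setminus\{i\}$ is connected.

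\textbf{The gap.} That inequality \emph{is} the theorem, and you never prove it. The ``admissible $X$-parts'' are never defined, and the decisive refinement is only named, not carried out.

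The obstacle is also misdiagnosed. Carried out correctly, your averaging runs over partitions of $X$ into one $(k-1)$-set and $t-1$ $k$-sets, and uses $\binom{kt-1}{k}=(t-1)\binom{kt-1}{k-1}$. For each single $y\in Y$ it gives
$\bigl|\binom Xk\setminus\G_0\bigr|\ge\bigl|\{L\in\binom X{k-1}:L\cup\{y\}\in\G\}\bigr|$.
So the missing sets dominate only the \emph{largest} link, which is $O(t^{k-1})$. By contrast, $|\G_1|$ is essentially the \emph{sum} of the links over $y\in Y$, which can be of order $|Y|\,t^{k-1}$. The two naive bounds therefore differ by a factor of order $|Y|$, which can be as large as $\eta_k t$. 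This is not a constant like $k^{2k}$, and no choice of $\eta_k$ or $t_k$ absorbs it.

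What is needed is an inequality in which the number of missing $k$-subsets of $X$ grows with the number of vertices of $Y$ carrying large links. One possible route is to handle several $y$'s simultaneously, using that shiftedness makes the links nested. Your proposal does not supply such an inequality.

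\textbf{Two steps that are false as stated.}

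First, the claim that ``blocks containing $y$ are present by shiftedness'' is wrong. Shiftedness only yields sets lying below a member in the shifting order, not all $k$-sets through $y$. For example, let $Z=X\setminus[k-1]$ and take
$\bigl(\binom Xk\setminus\{G\in\binom Zk:kt-1\in G\}\bigr)\cup\{[k-1]\cup\{y\}:y\in Y\}$.
This family is shifted and has no $t$-matching. Yet the block through $y$ in a random partition of $X\cup\{y\}$ lies in it only when that block equals $[k-1]\cup\{y\}$.

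Second, your fallback lemma fails: it is not true that each $y\in Y$ has degree at most a $1/t$-fraction of that of a vertex of $X$. Consider the shifted family $\{G\in\binom{[N]}k:|G\cap[(k-1)t-1]|\ge k-1\}$. It has no $t$-matching. But every $y\in Y$ has degree $\binom{(k-1)t-1}{k-1}$, roughly half the degree of vertex $1$.
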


We shall use the following two elementary estimates.
\begin{lemma}\label{lem:window}
        Let $\ell=s-c$ and $m\ge3$.
        If $c\le\delta_m s$, then
        \[
                \eta_m\ell-(m+1)c
                \ge
                \frac{\eta_m^2}{m+1+2\eta_m}s.
        \]
        In particular, $(m+1)c<\eta_m\ell$.
\end{lemma}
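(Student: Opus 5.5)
The inequality is linear in $c$ once we substitute $\ell=s-c$, so the plan is to reduce it to the single hypothesis $c\le\delta_m s$ and then read off the consequence. Substituting gives
\[
        \eta_m\ell-(m+1)c=\eta_m s-(m+1+\eta_m)c .
\]
The right-hand side is decreasing in $c$, so it suffices to check the inequality at the extreme value $c=\delta_m s$. Since $m\ge3$ we have $\eta_m>0$ and $\delta_m>0$, so no sign issues arise.

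At $c=\delta_m s$, the definition \eqref{eq:eta-def} gives $\delta_m=\eta_m/(m+1+2\eta_m)$. Hence
\[
        \eta_m s-(m+1+\eta_m)\delta_m s
        =\eta_m s\Bigl(1-\frac{m+1+\eta_m}{m+1+2\eta_m}\Bigr)
        =\frac{\eta_m^2}{m+1+2\eta_m}\,s .
\]
This is exactly the claimed lower bound. For general $c\le\delta_m s$, monotonicity then yields the displayed inequality.

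For the ``in particular'' part, note that the right-hand side $\eta_m^2 s/(m+1+2\eta_m)$ is strictly positive because $s\ge 1$. Therefore $\eta_m\ell-(m+1)c>0$, which is $(m+1)c<\eta_m\ell$.

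There is no real obstacle here. The only point needing care is the algebraic simplification $1-\frac{m+1+\eta_m}{m+1+2\eta_m}=\frac{\eta_m}{m+1+2\eta_m}$, together with observing that the coefficient $m+1+\eta_m$ of $c$ is positive, which justifies the monotonicity step.
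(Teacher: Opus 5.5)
Your proposal is correct and matches the paper's proof: substitute $\ell=s-c$ to get $\eta_m s-(m+1+\eta_m)c$, apply $c\le\delta_m s$ (the coefficient of $c$ is positive), and simplify using $\delta_m=\eta_m/(m+1+2\eta_m)$. Your added remark that $s>0$ makes the right-hand side positive correctly supplies the ``in particular'' clause, which the paper leaves implicit.
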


\begin{proof}
        By $c\le\delta_m s$ and \eqref{eq:eta-def}, we have
        \[
                \eta_m\ell-(m+1)c
                =\eta_m s-(m+1+\eta_m)c
                \ge
                \bigl(\eta_m-(m+1+\eta_m)\delta_m\bigr)s=\frac{\eta_m^2}{m+1+2\eta_m}s.
        \]
        This completes the proof.
\end{proof}

\begin{lemma}\label{lem:gap}
        Let $\ell=s-c$, $n=sm+c$ and $m\ge3$.
        Assume $c\le\delta_m s$.
        For all sufficiently large $s$, if $1\le j\le m-1$ and $p=\ell+j-m-1$, then
        \[
                \binom{m\ell-1}{m}-
                \binom{mp-1}{m}
                >
                (j+m)\binom n{m-1}.
        \]
\end{lemma}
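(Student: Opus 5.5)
The plan is to compare orders of magnitude: the left side is of order $s^{m-1}\cdot s$, i.e.\ degree $m$ in $s$ but times a factor proportional to $m(\ell-p)=m(m+1-j)$, while the right side is degree $m-1$. Write $a=m\ell-1$ and $b=mp-1=a-m(m+1-j)$. Since $1\le j\le m-1$, we have $d\coloneqq a-b=m(m+1-j)\ge 2m$. Then
\[
        \binom{a}{m}-\binom{b}{m}=\sum_{i=b}^{a-1}\binom{i}{m-1}\ge d\binom{b}{m-1}\ge 2m\binom{b}{m-1}.
\]
So it suffices to show $2m\binom{b}{m-1}>(j+m)\binom{n}{m-1}$. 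Because $j+m\le 2m-1$, this reduces to showing $\binom{b}{m-1}/\binom{n}{m-1}>(2m-1)/(2m)$.

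Next, compare $b$ with $n$. We have $b=m\ell-1-m(m+1-j)\ge m\ell-m^2-1$ and $n=ms+c$. Hence
\[
        n-b\le ms+c-m(s-c)+m^2+1=(m+1)c+m^2+1.
\]
So $b\ge n-(m+1)c-O(1)$, and the ratio is at least $\bigl(1-\tfrac{(m+1)c+m^2+m}{n-m}\bigr)^{m-1}$.

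The main obstacle is that $c$ can be as large as $\delta_m s$, so $b/n$ is not close to $1$. We only know $(m+1)c\le(m+1)\delta_m s<\eta_m s$, using \cref{lem:window}. With $\eta_m=1/(2m^{2m+1})$ this gives
\[
        \frac{b}{n}\ge 1-\frac{\eta_m}{m}-o(1),
\]
and therefore
\[
        \left(\frac{b-m}{n}\right)^{m-1}\ge 1-(m-1)\frac{\eta_m}{m}-o(1)>1-\frac1{2m}\ge\frac{2m-1}{2m}
\]
for large $s$. This is comfortable because $\eta_m$ is tiny.

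If the crude bound $d\ge 2m$ had been too weak, I would instead use the full sum with $d=m(m+1-j)$ against the factor $j+m$. That is unnecessary here: the slack coming from $\eta_m\ll 1/m$ suffices, provided $s$ is taken large enough that the $O(1/s)$ error terms are absorbed.
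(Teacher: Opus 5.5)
Your proof is correct and follows essentially the same route as the paper: the telescoping bound $\binom{a}{m}-\binom{b}{m}\ge (a-b)\binom{b}{m-1}$, then a Bernoulli-type comparison of $\binom{b}{m-1}$ with $\binom{n}{m-1}$, using that $(m+1)c$ is a tiny fraction of $s$. The only difference is cosmetic. You replace $a-b=m(m+1-j)$ by $2m$ and $j+m$ by $2m-1$, so you need the ratio to exceed $1-\frac{1}{2m}$; the paper instead keeps $r=m(m+1-j)$ and checks $r\bigl(1-\frac{1}{16m}\bigr)>j+m$ directly.
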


\begin{proof}
        The choice of $\delta_m$ implies $\delta_m\le 1/(100m(m+1))$.
        Let $x=m\ell-1$ and $r=m(m+1-j)$.
        Since $mp-1=x-r$, $\binom{m\ell-1}{m}-\binom{mp-1}{m}=\sum_{a=0}^{r-1}\binom{x-a-1}{m-1}$, and this sum is at least $r\binom{x-r}{m-1}$.
        We compare $\binom{x-r}{m-1}$ with $\binom n{m-1}$.
        Since $x=m(s-c)-1$, $n=ms+c$, and $r=m(m+1-j)$, we have $n-(x-r-m+2)=(m+1)c+r+m-1$.
        Moreover, $r\le m(m+1)$, and by the choice of $\delta_m$, $(m+1)c\le s/(100m)$.
        Taking $s$ sufficiently large, we have $n-(x-r-m+2)\le s/(50m)\le n/(16m^2)$, where the last inequality uses $n\ge ms$.
        Therefore $(x-r-m+2)/n\ge 1-1/(16m^2)$.

        We have
        \[
                \frac{\binom{x-r}{m-1}}{\binom n{m-1}}
                =\prod_{i=0}^{m-2}\frac{x-r-i}{n-i}
                \ge
                \left(\frac{x-r-m+2}{n}\right)^{m-1}
                \ge
                \left(1-\frac1{16m^2}\right)^{m-1}
                \ge
                1-\frac1{16m}.
        \]
        Here the last inequality follows from Bernoulli's inequality.
        Thus $\binom{x-r}{m-1}\ge (1-1/(16m))\binom n{m-1}$.

        It suffices to show that $r(1-1/(16m))>j+m$.
        Indeed, since $r=m(m+1-j)$,
        \[
                m(m+1-j)\left(1-\frac1{16m}\right)-(j+m)
                =
                m(m+1-j)-\frac{m+1-j}{16}-(j+m)\ge \frac{7}{8},
        \]
        where the last inequality follows from taking derivative and letting $j=m-1$.
        Hence the desired strict inequality follows.
\end{proof}

In \cite{ChiWangK1}, the authors use averaging arguments to prove the following lemma.
\begin{lemma}[Chi and Wang \cite{ChiWangK1}]\label{lem:blocker}
        Let $k\ge2$, let $G\subseteq\binom Xk$, and suppose that $|X|=k\tau+\rho$, where $\tau\ge1$ and $\rho\ge0$.
        Let $Z=\binom Xk\setminus G$.
        If $\nu(G)<\tau$, then
        \[
                |Z|\ge
                \max\left\{
                \frac1{\tau}\binom{k\tau+\rho}{k},
                \binom{\rho+k}{k}
                \right\}.
        \]
\end{lemma}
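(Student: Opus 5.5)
We prove the two lower bounds for $|Z|$ separately, each by an averaging argument over a suitable family of substructures of $X$; we may assume $k\tau+\rho\ge k$, since otherwise both bounds are trivial.

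\emph{First bound: $|Z|\ge\frac1\tau\binom{k\tau+\rho}{k}$.} Fix an ordered partition of a $k\tau$-subset of $X$ into $\tau$ disjoint $k$-sets $B_1,\dots,B_\tau$, with the remaining $\rho$ elements left unused. Since $\nu(G)<\tau$, at least one $B_i$ lies outside $G$, i.e.\ in $Z$. Now average over a uniformly random such configuration, say a random permutation of $X$ cut into consecutive blocks. Each block $B_i$ is then a uniformly random $k$-subset of $X$. By linearity of expectation, $1\le \mathbb E\,\#\{i:B_i\in Z\}=\tau\,|Z|/\binom{k\tau+\rho}{k}$, which rearranges to $|Z|\ge\frac1\tau\binom{k\tau+\rho}k$.

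\emph{Second bound: $|Z|\ge\binom{\rho+k}k$.} Take any maximum matching $M$ of $G$. It has $|M|\le\tau-1$ edges, so $|V(M)|\le k(\tau-1)$. The set $Y=X\setminus V(M)$ therefore has at least $k\tau+\rho-k(\tau-1)=\rho+k$ elements. By maximality of $M$, no $k$-subset of $Y$ belongs to $G$, since otherwise it could be added to $M$. Hence $\binom Yk\subseteq Z$, giving $|Z|\ge\binom{\rho+k}{k}$.

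Combining the two bounds gives the maximum. Neither step is a real obstacle. The only care needed is in the first argument: we must check that each block is uniformly distributed over $\binom Xk$ when the configuration is chosen uniformly. This holds by symmetry under permutations of $X$.
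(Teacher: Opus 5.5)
Your proof is correct and follows essentially the route the paper indicates for this cited lemma. The first bound is the same averaging over ordered partitions into $\tau$ disjoint $k$-blocks as the double count in the case $h=0$ of \cref{lem:p-ordered-local-blocker}, and the second is exactly the maximum-matching argument the paper reproduces in the second case of the proof of \cref{lem:residual-blocker} (your opening reduction to $k\tau+\rho\ge k$ is vacuous since $\tau\ge1$, but harmless).
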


Based on this lemma, we prove two much stronger lemmas for later proofs.

\begin{lemma}\label{lem:residual-blocker}
        Fix $k\ge3$.
        There are constants $\kappa_{\ref{lem:residual-blocker}}=\kappa_{\ref{lem:residual-blocker}}(k)>0$ and $\gamma_{\ref{lem:residual-blocker}}=\gamma_{\ref{lem:residual-blocker}}(k)>0$ such that the following holds.
        Let $\gamma\ge\gamma_{\ref{lem:residual-blocker}}$, let $\tau\ge1$ and $\rho\ge0$ be integers with $\gamma=\tau+\rho$, and let $W$ be a set of size $k\tau+\rho$.
        If $\A\subseteq\binom Wk$ satisfies $\nu(\A)<\tau$ and $\B=\binom Wk\setminus\A$, then $|\B|\ge \kappa_{\ref{lem:residual-blocker}}(\rho+1)\gamma^{k-1}$.
\end{lemma}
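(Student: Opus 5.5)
The plan is a case split on the size of $\rho$ relative to $\gamma=\tau+\rho$. Write $N=|W|=k\tau+\rho$, and note that $\gamma\le N\le k\gamma$.

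\emph{Large $\rho$.} Suppose $\rho\ge\gamma/2$. Then \cref{lem:blocker} gives
\[
|\B|\ge\binom{\rho+k}{k}\ge\frac{\rho^k}{k!}\ge\frac{(\gamma/2)^{k-1}}{k!}\cdot\frac{\rho+1}{2},
\]
which has the required form.

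\emph{Bounded $\rho$.} Suppose $\rho\le\rho_0$ for a constant $\rho_0=\rho_0(k)$. Then $\tau\ge\gamma-\rho_0$, and the first term of \cref{lem:blocker} gives
\[
|\B|\ge\frac1\tau\binom{N}{k}\ge c_k\gamma^{k-1}\ge\frac{c_k}{\rho_0+1}(\rho+1)\gamma^{k-1}.
\]
So both extreme ranges follow directly from \cref{lem:blocker}, and the real content is the middle range $\rho_0<\rho<\gamma/2$, where $\tau>\gamma/2$. There \cref{lem:blocker} loses exactly the factor $\rho+1$ we need. The extremal example $\binom{U}{k}$ with $|U|=k\tau-1$ shows the target is natural: its complement consists of the $k$-sets meeting the $\rho+1$ vertices outside $U$, which is about $(\rho+1)\binom{N}{k-1}$ sets.

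\emph{Middle range: averaging over fan configurations.} I would choose, uniformly at random, $\tau$ pairwise disjoint $(k-1)$-sets $P_1,\dots,P_\tau\subseteq W$. Let $R$ be the remaining $\gamma$ vertices, and form the bipartite graph $H$ on $[\tau]\times R$ in which $(j,x)$ is an edge when $P_j\cup\{x\}\in\A$. A matching of $H$ saturating $[\tau]$ would give $\tau$ disjoint members of $\A$, so $\nu(H)\le\tau-1$. By K\"onig's theorem, $H$ has a vertex cover $J'\cup R'$ of size at most $\tau-1$. Setting $a=\tau-|J'|\ge1$, all pairs in $([\tau]\setminus J')\times(R\setminus R')$ are non-edges, and there are at least $a(\gamma-a+1)$ of them. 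Each non-edge is a member of $\B$ of the special form $P_j\cup\{x\}$. Each fixed $k$-set is realised as such a pair with probability about $k\tau\gamma/\binom{N}{k}$. Taking expectations therefore gives
\[
|\B|\gtrsim\min_{1\le a\le\tau}a(\gamma-a+1)\cdot\frac{\binom Nk}{k\tau\gamma}.
\]

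\emph{Main obstacle.} The bound just obtained is good when the minimum is attained at $a=\tau$, where it equals $\tau(\rho+1)$, but bad when $a=1$, where it equals only $\gamma$. In the middle range the $a=1$ scenario, a single $(k-1)$-set all of whose extensions lie in $\B$, is what must be ruled out or charged differently. I would first try to handle it with a dichotomy. If many $(k-1)$-sets $P$ have all $\Omega(\gamma)$ of their extensions in $\B$, then counting those extensions directly gives $\Omega(\gamma^{k-1}\cdot\gamma)/k$ sets of $\B$, and in the middle range $\gamma\ge\rho+1$. Otherwise, for a typical configuration the K\"onig deficiency has $a$ large, so the cover bound is close to $\tau(\rho+1)$. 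Making the second branch quantitative is the step I expect to be hardest. It may need $P_j$ of size $k-2$ with pairs from $R$, so that $H$ becomes a graph-matching problem controlled by Erd\H{o}s--Gallai, or an iterative deletion of high-$\B$-degree vertices that reduces $\rho$ by one at a time while keeping $\nu(\A)<\tau$. Finally, the constant $\kappa_{\ref{lem:residual-blocker}}$ is the minimum of the constants from the three ranges, and $\gamma_{\ref{lem:residual-blocker}}$ is taken large enough for the binomial approximations.
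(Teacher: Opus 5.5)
\textbf{Gap: the middle range is not proved.} Your large-$\rho$ and bounded-$\rho$ cases are correct, but the range $\rho_0<\rho<\gamma/2$ is left open, and that is where the content of the lemma lies.

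As you note, the fan/K\"onig averaging is no stronger than \cref{lem:blocker}. The obstruction is also broader than $a=1$. Since $a(\gamma-a+1)\le a\gamma$ and $\tau>\gamma/2$ in this range, the cover bound reaches order $\tau(\rho+1)$ only when $a=\Omega(\rho+1)$. So every configuration with deficiency $a=o(\rho)$ must be excluded, not just $a=1$.

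Your dichotomy does not achieve this. With $M$ bad $(k-1)$-sets, a random configuration contains one with probability about $\tau M/\binom{|W|}{k-1}$. Your second branch therefore needs $M$ well below $\binom{|W|}{k-1}/\tau\approx\gamma^{k-2}$. But with $M$ of that order, your first branch gives only $|\B|\gtrsim M|W|/k\approx\gamma^{k-1}$, again losing the factor $\rho+1$. The other suggestions (Erd\H{o}s--Gallai on pairs, iterative deletion) are not carried out. What is missing is an extremal upper bound on $|\A|$ for $|W|$ slightly above $k\tau$, not a sharper averaging.

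\textbf{How the paper closes it.} The paper uses \cref{thm:npemc}: Frankl's theorem that $|\A|\le\binom{k\tau-1}{k}$ when $\tau\ge t_k$ and $k\tau\le|W|<(k+\eta_k)\tau$. With it, no middle range remains. Put $\theta=\eta_k/(2+\eta_k)$.
\begin{itemize}
\item If $\rho\le\theta\gamma$, then $\rho\le\eta_k\tau/2$, so $|W|<(k+\eta_k)\tau$, and $\tau\ge(1-\theta)\gamma\ge t_k$ for $\gamma$ large. Hence $|\B|\ge\binom{k\tau+\rho}{k}-\binom{k\tau-1}{k}=\sum_{i=0}^{\rho}\binom{k\tau-1+i}{k-1}\ge(\rho+1)\binom{k\tau-1}{k-1}$. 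This is $\Omega\bigl((\rho+1)\gamma^{k-1}\bigr)$ because $\tau\ge\gamma/(1+\eta_k)$.
\item If $\rho>\theta\gamma$, your large-$\rho$ computation works verbatim with $\gamma/2$ replaced by $\theta\gamma$. It gives $|\B|\ge\binom{\rho+k}{k}\ge\frac{\theta^{k-1}}{2k!}(\rho+1)\gamma^{k-1}$.
\end{itemize}
Your bounded-$\rho$ case then becomes unnecessary.

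Alternatively, \cref{thm:frankl-shadow-matching}, which the paper states for later use, settles every range in one line. With $N=|W|$ it gives $|\A|\le(\tau-1)\binom{N}{k-1}$, hence $|\B|\ge\binom{N}{k}-(\tau-1)\binom{N}{k-1}=\frac{\rho+1}{k}\binom{N}{k-1}$, and $N\ge\gamma$.
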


\begin{proof}
        Let $\theta=\eta_k/(2+\eta_k)$.
        Choose $\gamma_{\ref{lem:residual-blocker}}$ sufficiently large so that $(1-\theta)\gamma_{\ref{lem:residual-blocker}}\ge t_k$, where \(t_k\) is the constant from \cref{thm:npemc}.

        First suppose that $\rho\le\theta\gamma$.
        Since $\gamma=\tau+\rho$, this gives $\rho\le \theta(\tau+\rho)$, and hence $\rho\le \theta\tau/(1-\theta)={\eta_k}\tau/2$.
        Therefore
        \[
                |W|=k\tau+\rho
                \le \left(k+\frac{\eta_k}{2}\right)\tau
                <(k+\eta_k)\tau.
        \]
        Also, since $\rho\le\theta\gamma$, we have $\tau=\gamma-\rho\ge (1-\theta)\gamma$.
        Since $\rho\le\theta\gamma$, we have $\tau=\gamma-\rho\ge(1-\theta)\gamma
                \ge(1-\theta)\gamma_{\ref{lem:residual-blocker}}
                \ge t_k$.
        Together with \(k\tau\le |W|<(k+\eta_k)\tau\), this allows us to apply \cref{thm:npemc}, giving $|\A|\le\binom{k\tau-1}{k}$.
        Consequently,
        \[
                |\B|
                \ge
                \binom{k\tau+\rho}{k}-\binom{k\tau-1}{k}=
                \sum_{i=0}^{\rho}\binom{k\tau-1+i}{k-1}\ge
                (\rho+1)\binom{k\tau-1}{k-1}
        \]

        Since $\rho\le{\eta_k}\tau/2$, we have $\gamma=\tau+\rho\le \left(1+\eta_k/2\right)\tau\le (1+\eta_k)\tau$.
        Therefore, $\tau\ge \frac{\gamma}{1+\eta_k}$.
        For every $0\le j\le k-2$, since $\gamma_{\ref{lem:residual-blocker}}$ is sufficiently large,
        \[
                k\tau-1-j
                \ge
                \frac{k\gamma}{1+\eta_k}-(k-1)
                \ge
                \frac{k\gamma}{3(1+\eta_k)}.
        \]
        Thus
        \[
                \binom{k\tau-1}{k-1}
                =
                \frac{\prod_{j=0}^{k-2}(k\tau-1-j)}{(k-1)!}
                \ge
                \frac{1}{(k-1)!}
                \left(\frac{k\gamma}{3(1+\eta_k)}\right)^{k-1}.
        \]
        Hence in this case
        \[
                |\B|
                \ge
                \frac{1}{(k-1)!}
                \left(\frac{k}{3(1+\eta_k)}\right)^{k-1}
                (\rho+1)\gamma^{k-1}.
        \]

        Suppose instead that $\rho>\theta\gamma$.
        Let $M$ be a maximum matching in $\A$.
        Since $\nu(\A)<\tau$, the matching $M$ has at most $\tau-1$ edges and covers at most $k(\tau-1)$ vertices.
        Therefore at least
        \[
                |W|-k(\tau-1)
                =
                k\tau+\rho-k(\tau-1)
                =
                \rho+k
        \]
        vertices remain uncovered.
        By maximality of $M$, no $k$-set contained in the uncovered set belongs to $\A$ as otherwise such a $k$-set could be added to $M$, contradicting the maximality of $M$.
        Hence every $k$-set contained in the uncovered set belongs to $\B$, and so $|\B|\ge\binom{\rho+k}{k}$.
        Since $\rho>\theta\gamma$, we have $\rho^{k-1}>\theta^{k-1}\gamma^{k-1}$.
        Also $\rho\ge1$, because $\rho$ is an integer and $\rho>\theta\gamma>0$.
        Hence, $\rho\ge (\rho+1)/2$.
        Therefore
        \[
                \binom{\rho+k}{k}
                \ge
                \frac{\rho^k}{k!}
                \ge
                \frac{\theta^{k-1}}{2k!}(\rho+1)\gamma^{k-1}.
        \]
        Thus in this case
        \[
                |\B|
                \ge
                \frac{\theta^{k-1}}{2k!}(\rho+1)\gamma^{k-1}.
        \]
        Hence, the lemma holds by taking $\kappa_{\ref{lem:residual-blocker}}$ sufficiently small.
\end{proof}

Given a set $A$ and an integer $q$, write $A=A_1\sqcup\cdots\sqcup A_q$ if $A$ is the disjoint union of $A_1,\ldots,A_q$.
Given integers $p,u,b,h$, let $P,X$ be disjoint sets with $|P|=pu$ and
$|X|=bu+h$. For each $e\in\binom Pp$, let
$\G_e^{(b)}\subseteq\binom Xb$ and
$\G_e^{(b+1)}\subseteq\binom X{b+1}$.
A \emph{mixed ordered partition} is a pair of ordered partitions $P=e_1\sqcup\cdots\sqcup e_u$ and $X=T_1\sqcup\cdots\sqcup T_u$ such that $|e_r|=p$ for every $r$, exactly $h$ of the $T_r$'s have
size $b+1$, the remaining $u-h$ have size $b$, and
$T_r\in\G_{e_r}^{(|T_r|)}$ for every $r\in[u]$.

\begin{lemma}\label{lem:p-ordered-local-blocker}
        Let $m,p$ be integers with $m\ge2$ and $1\le p\le m-1$, and set
        $b=m+1-p$. There exist constants
        $\gamma_{\ref{lem:p-ordered-local-blocker}}>0$,
        $\rho_{\ref{lem:p-ordered-local-blocker}}>0$, and
        $C_{\ref{lem:p-ordered-local-blocker}}$ depending only on
        $m,p$ such that the following holds.

        Let $u,h$ be integers with
        $u\ge C_{\ref{lem:p-ordered-local-blocker}}$ and
        $0\le h\le\rho_{\ref{lem:p-ordered-local-blocker}}u$.
        Let $P,X$ and the families $\G_e^{(b)},\G_e^{(b+1)}$ be as in the definition above.
        If no mixed ordered partition exists, then
        \[
                \sum_{e\in\binom Pp}
                \left(
                \left|\binom Xb\setminus\G_e^{(b)}\right|
                +
                \left|\binom X{b+1}\setminus\G_e^{(b+1)}\right|
                \right)
                \ge \gamma_{\ref{lem:p-ordered-local-blocker}}(h+1)u^m.
        \]
\end{lemma}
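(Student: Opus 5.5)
The plan is a two-stage averaging argument that reduces the lemma to a statement about one fixed partition of $P$, followed by a random-partition-plus-repair argument on $X$. For brevity write $Z_e^{(b)}=\binom Xb\setminus\G_e^{(b)}$ and $Z_e^{(b+1)}=\binom X{b+1}\setminus\G_e^{(b+1)}$. The scaling suggests this route: $\binom{pu}{p}\asymp u^p$ and $\binom{bu+h}{b}\asymp u^b$, and since $p+b=m+1$, the target $(h+1)u^m$ is a $(h+1)/u$-fraction of the total number of pairs $(e,T)$.

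\emph{Stage 1: averaging over $P$.} Take a uniformly random ordered partition $P=e_1\sqcup\cdots\sqcup e_u$ into $p$-sets. Each $e\in\binom Pp$ equals a given block $e_r$ with probability $1/\binom{pu}p$, so
\[
\sum_{e}\bigl(|Z_e^{(b)}|+|Z_e^{(b+1)}|\bigr)=\frac{\binom{pu}{p}}{u}\,\mathbb E\Bigl[\sum_{r=1}^u\bigl(|Z_{e_r}^{(b)}|+|Z_{e_r}^{(b+1)}|\bigr)\Bigr],
\]
and $\binom{pu}p/u\ge c_{m,p}u^{p-1}$. Since no mixed ordered partition exists, no ordered partition of $P$ extends to one. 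It therefore suffices to prove the following \emph{fixed-partition claim}. Let $u$ pairs of families $(\G_r^{(b)},\G_r^{(b+1)})_{r\in[u]}$ on $X$ be given, with $|X|=bu+h$, such that there is no partition $X=T_1\sqcup\cdots\sqcup T_u$ with exactly $h$ parts of size $b+1$, the rest of size $b$, and $T_r\in\G_r^{(|T_r|)}$. Then
\[
\sum_r\bigl(|Z_r^{(b)}|+|Z_r^{(b+1)}|\bigr)\ge\gamma'(h+1)u^b .
\]

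\emph{Stage 2: proving the claim.} I will prove the contrapositive: assume the total missing mass is less than $\gamma'(h+1)u^b$ and construct the partition. Call an index $r$ \emph{$b$-heavy} if $|Z_r^{(b)}|\ge\varepsilon u^b$, and \emph{$(b+1)$-heavy} if $|Z_r^{(b+1)}|\ge\varepsilon u^{b+1}$.

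First handle the heavy indices. No $r$ is both $b$-heavy and $(b+1)$-heavy: $\varepsilon u^{b+1}\ge\varepsilon u^b/\rho\cdot h$ exceeds the budget once $\gamma'\ll\varepsilon/\rho$, and this is where $h\le\rho u$ is used. The number of $(b+1)$-heavy indices is at most $\gamma'(h+1)/(\varepsilon u)<1$, so there are none. The number of $b$-heavy indices is at most $\gamma'(h+1)/\varepsilon\le h$ when $h\ge1$. For $h=0$ there are no heavy indices at all. So I assign a part of size $b+1$ to every $b$-heavy index and distribute the remaining $(b+1)$-parts among the light indices. Then choose a uniformly random partition of $X$ into parts of the prescribed sizes. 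An index $r$ fails with probability roughly $b!\,|Z_r^{(|T_r|)}|/u^{|T_r|}$, so the expected number of failures is $O(\gamma'(h+1)/\varepsilon+\varepsilon)$. This is small compared with $h+1$ but possibly larger than $1$.

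Next repair the failures by local switchings. For a failed index $r$, pick a partner $r'$ that is light and not yet touched, and swap one vertex of $T_r$ with one vertex of $T_{r'}$. There are $\Theta(u^2)$ candidate swaps. A swap is bad only if one of the two new sets lies in $Z_r$ or $Z_{r'}$. A counting argument bounds the number of bad swaps, because each missing set of $r$ is hit by $O(u)$ swaps and light families have few missing sets. Hence most swaps succeed. Since only $O(\gamma'(h+1))$ indices are involved, the repairs can be performed one at a time without interference. The result is a valid partition, a contradiction.

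\emph{Main obstacle.} The delicate step is extracting the factor $(h+1)$ instead of merely $1$. A single failure witness only gives $\Omega(u^b)$, which is essentially the bound of \cref{lem:blocker}. The heavy/light bookkeeping must show that the $h$ spare vertices can absorb up to about $h$ badly behaved indices. If the switching argument becomes messy, my fallback is to mimic the proof of \cref{lem:residual-blocker}, splitting into the cases $h\le\theta u$ and $h>\theta u$. In the first case I would use a stability version of the nearly perfect matching count. In the second case I would argue directly from the $h$ uncovered vertices, where all $(b+1)$-subsets must be missing for the relevant indices.
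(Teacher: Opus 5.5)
Your Stage~1 reduction is correct. It is in effect what the paper does: its base configurations include the ordered partition $\mathbf e$ of $P$. Averaging over $\mathbf e$ and proving a bound of order $(h+1)u^b$ for each fixed partition is therefore the same computation, and the fixed-partition claim you isolate is true.

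The gap is the repair step of Stage~2. Your claim that ``most swaps succeed'' rests on global counts. But a failed $T_r$ is by definition a member of $Z_r$, so what matters is how many one-vertex modifications of this particular set stay in $Z_r$, and how many modifications of $T_{r'}$ land in $Z_{r'}$. The bound $|Z_r|<\varepsilon u^b$ does not control either.
\begin{itemize}
\item Suppose every index rejects exactly the $b$-sets through a fixed vertex $v_0$. All indices are then light, and the total of about $u\binom{bu+h-1}{b-1}=\Theta(u^b)$ missing sets fits in the budget $\gamma'(h+1)u^b$ once $h\gg 1/\gamma'$. If the random partition puts $v_0$ in a $b$-part, a swap works only if it moves $v_0$ itself into one of the $h$ parts of size $b+1$. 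That is a fraction of order $h/u$ of the $\Theta(u)$ candidate swaps (not $\Theta(u^2)$), not ``most''.
\item If $Z_r$ consists of the $b$-sets meeting a small set $S$ and $|T_r\cap S|\ge2$, then no single swap repairs $r$ at all.
\end{itemize}
So your argument does not guarantee that even one repair exists, let alone $\Theta(\gamma' h)$ non-interfering ones. The fallback in your last paragraph is too vague to substitute.

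The missing idea is to let the $h$ spare vertices absorb the failures, instead of fixing the $(b+1)$-parts in advance. Take a random base configuration: a uniform $O\in\binom Xh$ and a uniform ordered partition of $X\setminus O$ into $b$-sets. Call $(r,z)$ with $z\in O$ a forbidden upgrade if $T_r\cup\{z\}\in Z_r^{(b+1)}$. Then
\[
\mathbb E[\#\,b\text{-failures}]=\sum_r\frac{|Z_r^{(b)}|}{\binom{bu+h}{b}},\qquad \mathbb E[\#\,\text{forbidden upgrades}]=h\sum_r\frac{|Z_r^{(b+1)}|}{\binom{bu+h}{b+1}}.
\]
Under your budget the first is $O(\gamma'(h+1))$ and the second is $O(\gamma' h(h+1)/u)=O(\gamma' h)$. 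By Markov, with positive probability there are at most $h$ failures and, for $h\ge1$, fewer than $h$ forbidden upgrades; for $h=0$ the first event already gives a valid partition.

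In that situation Hall's theorem matches the failed positions injectively into $O$ by allowed upgrades: a deficient set $Y$ would force $|Y|(h-|Y|+1)\ge h$ forbidden upgrades. The remaining overflow vertices can then be attached greedily because $h\le u/4$, and this yields a mixed ordered partition.

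The paper runs exactly this dichotomy as a double count. In one case it obtains $\frac{h+1}{2u}\binom{pu}{p}\binom{bu+h}{b}$ missing $b$-pairs. In the other it obtains $\frac{1}{2u}\binom{pu}{p}\binom{bu+h}{b+1}=\Omega(u^{m+1})\ge\Omega((h+1)u^m)$ missing $(b+1)$-pairs. This is where the factor $h+1$ comes from.
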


\begin{proof}
        For each $e\in\binom Pp$, write $\mathcal B_e^{(b)}=\binom Xb\setminus\G_e^{(b)}$ and $\mathcal B_e^{(b+1)}=\binom X{b+1}\setminus\G_e^{(b+1)}$.
        Also write
        \[
                \mathcal Q_b=\{(e,T):e\in\binom Pp,\ T\in\mathcal B_e^{(b)}\} \text{ and }\mathcal Q_{b+1}=\{(e,T):e\in\binom Pp,\ T\in\mathcal B_e^{(b+1)}\}.
        \]
        We choose $\rho_{\ref{lem:p-ordered-local-blocker}}\le1/4$, and later choose $C_{\ref{lem:p-ordered-local-blocker}}$ sufficiently large.
        All constants below depend only on $m$ and $p$.

        First suppose $h=0$.
        Let $\mathcal P_0$ be the set of all ordered pairs of partitions $P=e_1\sqcup\cdots\sqcup e_u$ and $X=T_1\sqcup\cdots\sqcup T_u$, where $|e_r|=p$ and $|T_r|=b$ for every $r$.
        Since no mixed ordered partition exists, every member of $\mathcal P_0$ contains at least one forbidden pair from $\mathcal Q_b$.

        Let $I_0=\binom{pu}{p}\binom{bu}{b}$ be the number of possible $(p,b)$-pairs $(e,T)$.
        Fix such a pair $(e,T)$.
        Let $M_0$ be the number of members of $\mathcal P_0$ in which $e_r=e$ and $T_r=T$ for some $r\in[u]$.
        Then
        \[
                M_0
                =
                u\frac{(p(u-1))!}{(p!)^{u-1}}\frac{(b(u-1))!}{(b!)^{u-1}}
                =
                \frac{|\mathcal P_0|u}{I_0}.
        \]
        Indeed, one may first choose the common position $r$, then partitions $P\setminus e$ into $u-1$ ordered $p$-sets and $X\setminus T$ into $u-1$ ordered $b$-sets.

        Since every member of $\mathcal P_0$ contains at least one forbidden pair from $\mathcal Q_b$, we have $|\mathcal Q_b|M_0\ge |\mathcal P_0|$.
        Therefore,
        \[
                |\mathcal Q_b|
                \ge
                \frac{|\mathcal P_0|}{M_0}
                =
                \frac{I_0}{u}
                =
                \frac1u\binom{pu}{p}\binom{bu}{b}
                \ge c_0u^m
        \]
        for a constant $c_0=c_0(m,p)>0$, since $p+b=m+1$ and $u\ge C_{\ref{lem:p-ordered-local-blocker}}$.
        This proves the result when $h=0$.

        Now assume $h\ge1$.
        A \emph{base configuration} is a triple $\mathcal C_0=(\mathbf e,O,\mathbf T)$, where $P=e_1\sqcup\cdots\sqcup e_u$ with $|e_r|=p$, $O\in\binom Xh$, and $X\setminus O=T_1\sqcup\cdots\sqcup T_u$ with $|T_r|=b$.
        Let $\mathcal C$ be the set of all base configurations.

        For a base configuration $\mathcal C_0=(\mathbf e,O,\mathbf T)$, call $(e_r,T_r)$ the $r$-th position.
        Thus each position pairs the $r$-th $p$-block of $P$ with the $r$-th $b$-block of $X\setminus O$.
        We also write this position as $(r,e_r,T_r)$ to include its index.
        Here $e_r\subseteq P$ and $T_r\subseteq X\setminus O$.
        The overflow set $O$ is used only later, when a position is upgraded by replacing $T_r$ with $T_r\cup\{z\}$ for some $z\in O$.

        A position $(r,e_r,T_r)$ of $\mathcal C_0$ is called \emph{$b$-forbidden} if $T_r\in\mathcal B_{e_r}^{(b)}$.
        First suppose that at least $|\mathcal C|/2$ base configurations have at least $h+1$ $b$-forbidden positions.

        Let $I_b=\binom{pu}{p}\binom{bu+h}{b}$ be the number of possible $(p,b)$-pairs.
        Fix such a pair $(e,T)$. The number of base configurations in which $(e,T)$ occurs as a position $(e_r,T_r)$ is
        \[
                M_b
                =
                u\frac{(p(u-1))!}{(p!)^{u-1}}\binom{bu+h-b}{h}\frac{(b(u-1))!}{(b!)^{u-1}}
                =
                \frac{|\mathcal C|u}{I_b}.
        \]
        Indeed, one chooses the common position $r$, partitions $P\setminus e$, chooses the overflow set $O\subseteq X\setminus T$ of size $h$, and then partitions $X\setminus(O\cup T)$ into $u-1$ ordered $b$-sets.
        Hence, $|\mathcal Q_b|M_b\ge\frac12|\mathcal C|(h+1)$, and therefore
        \[
                |\mathcal Q_b|
                \ge
                \frac{|\mathcal C|(h+1)I_b}{2|\mathcal C|u}
                =
                \frac{h+1}{2}\frac{I_b}{u}
                \ge
                c_1(h+1)u^m,
        \]
        since $I_b\ge c'_1u^{m+1}$.
        This gives the desired conclusion in this case.

        We may therefore assume that more than half of the base configurations have at most $h$ $b$-forbidden positions.
        Call such configurations good.

        Fix a good base configuration $\mathcal C_0=(\mathbf e,O,\mathbf T)$.
        Define the bipartite overflow graph $\Gamma_O(\mathcal C_0)$ as follows.
        One part consists of the positions $(r,e_r,T_r)$, and the other part is $O$.
        Join a position $(r,e_r,T_r)$ to $z\in O$ if $T_r\cup\{z\}\in\G_{e_r}^{(b+1)}$.
        Here an upgrade means replacing a $b$-set $T_r$ by $T_r\cup\{z\}$, where $z\in O$, so that the tail has size $b+1$.
        The upgrade is allowed precisely when $T_r\cup\{z\}\in\G_{e_r}^{(b+1)}$.
        Thus a missing edge is exactly a forbidden upgrade.

        We claim that $\Gamma_O(\mathcal C_0)$ has at least $h$ missing edges.
        Suppose to the contrary that it has less than $h$ missing edges.
        We first show that the $b$-forbidden positions can be assigned distinct vertices of $O$ by allowed upgrades.
        In fact, if no such assignment exists, then Hall's condition would give a nonempty family $Y$ of $b$-forbidden positions with $|N(Y)|<|Y|$, where $N(Y)$ denotes the neighborhood of $Y$ in the overflow graph.
        Then all pairs in $Y\times (O\setminus N(Y))$ are missing edges, so the number of missing edges is at least
        \[
                |Y|(h-|N(Y)|)
                \ge |Y|(h-|Y|+1)
                \ge h,
        \]
        because $1\le |Y|\le h$.
        This contradicts the assumption that there are fewer than $h$ missing edges.

        Hence all $b$-forbidden positions can be matched injectively into $O$ by allowed upgrades.
        After doing this, assign the remaining overflow vertices greedily to unused positions.
        When a remaining $z\in O$ is considered, fewer than $h$ positions have already been used, and $z$ is non-adjacent to fewer than $h$ positions.
        Hence fewer than $2h$ positions are unavailable.
        Since $h\le u/4$, an unused neighbor remains.

        After all overflow vertices are assigned, upgrade exactly the positions to which they were assigned.
        Every upgraded position is allowed as a $(b+1)$-tail, and every not upgraded position is not $b$-forbidden, hence is allowed as a $b$-tail.
        This gives a mixed ordered partition, contradicting the hypothesis.
        Therefore every good base configuration has at least $h$ forbidden upgrades.

        We now count these forbidden upgrades.
        Let $I_{b+1}=\binom{pu}{p}\binom{bu+h}{b+1}$ be the number of possible $(b+1)$-pairs.
        Each base configuration has exactly $uh$ upgrade incidences.
        Fix a possible $(p,b+1)$-pair $(e,U)$. The number of upgrade incidences $(\mathcal C_0,r,z)$ realizing $(e,U)$, meaning $e_r=e$ and $T_r\cup\{z\}=U$, is
        \[
                M_{b+1}
                =
                u(b+1)\frac{(p(u-1))!}{(p!)^{u-1}}\binom{bu+h-b-1}{h-1}\frac{(b(u-1))!}{(b!)^{u-1}}
                =
                \frac{|\mathcal C|uh}{I_{b+1}}.
        \]
        Indeed, one chooses the common position $r$, chooses the overflow vertex $z\in U$, sets $T_r=U\setminus\{z\}$, partitions $P\setminus e$, chooses the remaining $h-1$ overflow vertices from $X\setminus U$, and partitions the remaining vertices of $X$ into $u-1$ ordered $b$-sets.
        Since more than half of the base configurations are good and every good configuration has at least $h$ forbidden upgrades, we have
        \[
                |\mathcal Q_{b+1}|
                M_{b+1}
                \ge
                \frac12|\mathcal C|h,
        \]
        and hence
        \[
                |\mathcal Q_{b+1}|
                \ge
                \frac{I_{b+1}}{2u}
                \ge c_2 u^{m+1}.
        \]
        Since $h\le \rho_{\ref{lem:p-ordered-local-blocker}}u\le u/4$, we have $h+1\le u$, and hence $|\mathcal Q_{b+1}| \ge c_2(h+1)u^m$.
        Taking $\gamma_{\ref{lem:p-ordered-local-blocker}}\le \min\{c_0,c_1,c_2\}$ proves
        \[
                |\mathcal Q_b|+|\mathcal Q_{b+1}|
                =
                \sum_{e\in\binom Pp}
                \left(
                \left|\binom Xb\setminus\G_e^{(b)}\right|
                +
                \left|\binom X{b+1}\setminus\G_e^{(b+1)}\right|
                \right)
                \ge
                \gamma_{\ref{lem:p-ordered-local-blocker}}(h+1)u^m.
        \]
        This completes the proof.
\end{proof}

Later, in the proof of \cref{thm:m3-high-range}, we need the following results.

\begin{lemma}[Erd\H{o}s and Gallai \cite{ErdosGallai}]\label{lem:EG}
        If $G\subseteq\binom{[n]}2$ and $\nu(G)<s$, then $|G|\le\max\left\{\binom{2s-1}{2},\binom n2-\binom{n-s+1}{2}\right\}$.
\end{lemma}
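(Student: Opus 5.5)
I would prove this via the Tutte--Berge formula: bound the number of edges in terms of an optimal barrier set $U$, and then optimise over $|U|$.

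\textbf{Trivial cases.} If $n\le 2s-1$ the bound is trivial, since $|G|\le\binom n2\le\binom{2s-1}2$. So assume $n\ge 2s$. We may also add edges to $G$ as long as $\nu(G)<s$ is preserved, so we may take $G$ to be edge-maximal with $\nu(G)\le s-1$.

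\textbf{Structure from Tutte--Berge.} The formula gives
\[
        \nu(G)=\min_{U\subseteq[n]}\frac{n+|U|-\mathrm{odd}(G-U)}{2},
\]
where $\mathrm{odd}(G-U)$ is the number of odd components of $G-U$. Fix a minimising $U$ and put $u=|U|$. Since $\nu(G)\le s-1$, we get
\[
        q\coloneqq\mathrm{odd}(G-U)\ge n+u-2s+2 .
\]
Since $q\le n-u$, this forces $u\le s-1$. Every edge of $G$ either meets $U$ or lies inside a component of $G-U$. Hence
\[
        |G|\le \binom u2+u(n-u)+\sum_C\binom{|C|}2 .
\]
We may merge all even components into one odd component without decreasing $q$. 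By convexity of $x\mapsto\binom x2$, for a fixed total $n-u$ and at least $q$ odd parts, the sum $\sum_C\binom{|C|}{2}$ is maximised when $q-1$ components are singletons and one component has size $n-u-q+1\le 2s-1-2u$. This yields
\[
        |G|\le f(u)\coloneqq \binom n2-\binom{n-u}2+\binom{2s-1-2u}2,\qquad 0\le u\le s-1 .
\]

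\textbf{Optimising over $u$.} I would show $f$ is convex on this integer range by computing second differences. The term $-\binom{n-u}{2}$ contributes $-1$, and $\binom{2s-1-2u}{2}$ contributes $+4$. So the second difference is $3>0$, and the maximum of $f$ over $0\le u\le s-1$ is attained at an endpoint:
\[
        f(0)=\binom{2s-1}2,\qquad f(s-1)=\binom n2-\binom{n-s+1}2 .
\]
This gives the stated bound.

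\textbf{Main obstacle.} The main care point is the step that reduces the component sizes to the "all singletons but one" configuration. It must respect both the parity constraint and the lower bound on $q$. I would handle it by the merging and convexity argument above: moving a vertex from a smaller odd component to a larger one, two vertices at a time, keeps the number of odd components unchanged and never decreases $\sum_C\binom{|C|}{2}$.
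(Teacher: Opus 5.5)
Your proof is correct and complete. The paper does not prove this lemma: it quotes the Erd\H{o}s--Gallai theorem as a black box and uses only the numerical bound, to control $|\F_2|$ in Section~5. Your route is the standard derivation from the Tutte--Berge formula. You take a minimising barrier $U$, which gives $u\le s-1$. Optimising the component sizes then bounds $|G|$ by
\[
        f(u)=\binom n2-\binom{n-u}2+\binom{2s-1-2u}2 .
\]
Finally, convexity (second difference $3$) pushes the maximum to $u\in\{0,s-1\}$.

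Compared with the citation, this makes the lemma self-contained modulo Tutte--Berge. It would also show where the two extremal configurations come from, which the bare citation does not. The price is that you rely on Tutte--Berge, itself a nontrivial theorem.

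Three small points are worth making explicit in a write-up:
\begin{itemize}
        \item Merging the even components into an odd one needs at least one odd component. This holds because $q\ge n+u-2s+2\ge 2$ once $n\ge 2s$.
        \item The comparison $\binom{n-u-q+1}{2}\le\binom{2s-1-2u}{2}$ and the identification of $\binom{2s-1-2u}{2}$ with the polynomial $(2s-1-2u)(2s-2-2u)/2$ both rest on $1\le n-u-q+1\le 2s-1-2u$. You do have this inequality.
        \item The edge-maximality assumption is never used and can be dropped.
\end{itemize}
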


\begin{theorem}[Frankl \cite{FranklDAM}]
        \label{thm:EMC3}
        Let $N$ and $t$ be positive integers with $N\ge3t-1$.
        If $\G\subseteq\binom{[N]}3$ satisfies $\nu(\G)<t$, then
        \[
                |\G|\le
                \max\left\{\binom N3-\binom{N-t+1}3,\binom{3t-1}3\right\}.
        \]
        Moreover, if $\binom{3t-1}3>\binom N3-\binom{N-t+1}3$ and equality holds with $|\G|=\binom{3t-1}3$, then $\G=\binom U3$ for some $U\in\binom{[N]}{3t-1}$.
\end{theorem}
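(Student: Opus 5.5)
The statement is Frankl's complete resolution of the Erd\H{o}s matching conjecture for $k=3$, which the paper quotes from \cite{FranklDAM}. If I had to reprove it, I would follow the standard shifting-and-induction scheme. The plan below is an outline; the hardest step, the clique regime in the third paragraph, is not resolved here.

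\emph{Reduction to shifted families.} Shifting operations $S_{ij}$ do not increase the matching number and do not change $|\G|$. So we may assume $\G$ is shifted (left-compressed), at least for the inequality. The uniqueness statement would need a separate argument, since shifting does not preserve the structure of extremal families. I would then induct on $N$. The base case $N=3t-1$ is trivial, because $\binom{3t-1}3$ already counts all triples.

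\emph{Inductive step.} Fix $N\ge 3t$ and split $\G$ by the last vertex $N$:
\[
\G(\bar N)=\{G\in\G: N\notin G\}\subseteq\binom{[N-1]}3,
\qquad
\G(N)=\{G\setminus\{N\}: N\in G\in\G\}\subseteq\binom{[N-1]}2 .
\]
Because $\G$ is shifted and $N\ge 3t$, a matching in the link $\G(N)$ can be completed using the vertices left free. This gives $\nu(\G(N))<t$ and $\nu(\G(\bar N))<t$. We then bound $|\G(\bar N)|$ by induction and $|\G(N)|$ by \cref{lem:EG} on $N-1$ vertices.

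\emph{Where the naive bounds combine and where they do not.} Suppose both parts are of ``star type''. Then $\binom{N-1}3-\binom{N-t}3$ and $\binom{N-1}2-\binom{N-t}2$ add up to exactly $\binom N3-\binom{N-t+1}3$, and the step closes. The main obstacle is the regime where the clique bound $\binom{3t-1}3$ is the larger one, namely $N$ up to about $\tfrac{4}{3}\cdot 3t$ (the crossover sits near $N\approx 4t$). There, adding the clique bounds for the two parts overshoots $\binom{3t-1}3$.

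\emph{Attempt on the clique regime.} I would first try a finer coupled estimate. Shiftedness forces $\G(N)$ to lie inside the $2$-sets of a set of size roughly $3t-1$ plus a star part, and this structure should be played against the number of triples missing from $\G(\bar N)$. \cref{lem:blocker} gives a lower bound for that number, so each link edge at $N$ would have to be paid for by missing triples elsewhere. Concretely, I would write $N=3t-1+r$. I would show that a shifted $\G$ with $\nu(\G)<t$ misses at least $\binom N3-\binom{3t-1}3$ triples, using an averaging over perfect-ish partitions of $[N]$ into triples plus $r+2$ leftover vertices, in the spirit of \cref{lem:blocker}. The worry is that this averaging loses a constant factor exactly near the crossover point, which is why Frankl's original proof needs a delicate case analysis there.

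\emph{Uniqueness.} Finally, when $\binom{3t-1}3$ is strictly larger and equality holds, I would trace the equality cases through the averaging argument and conclude $\G=\binom U3$. The extra care this needs is the point raised in the first paragraph: shifting can destroy the clique structure, so the argument has to be run on the original family rather than on a shifted copy.
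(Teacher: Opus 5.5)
\textbf{There is a genuine gap: your outline leaves essentially the whole theorem unproved.} The paper gives no proof of this statement; it quotes it from Frankl. The unresolved part is also larger than you indicate.

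\textbf{The induction on $N$ breaks at the transition, not only in the clique regime.} Let $N_0$ be the least $N$ with $\binom N3-\binom{N-t+1}3\ge\binom{3t-1}3$. This crossover sits near $\frac{3+\sqrt{321}}{6}\,t\approx 3.49t$ (the same $\sqrt{321}$ that appears in $t(s)$), not near $4t$. Your vertex-splitting step closes only when the induction hypothesis at $N-1$ is the star bound. For $3t\le N<N_0$ it is the clique bound, and any positive link bound overshoots $\binom{3t-1}3$, as you note. The step also fails at $N=N_0$ itself. There the hypothesis gives only $|\mathcal G(\bar N)|\le\binom{3t-1}3$. Since $\binom{3t-1}3>\binom{N-1}3-\binom{N-t}3$, adding the Erd\H{o}s--Gallai bound $\binom{N-1}2-\binom{N-t}2$ for the link already exceeds $\binom N3-\binom{N-t+1}3$. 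So your ``both parts of star type'' computation never gets started. Every $N>N_0$ depends on an argument at $N_0$ that you do not have, and unconditionally the induction proves nothing beyond the trivial base $N=3t-1$.

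\textbf{The averaging you propose cannot fill this gap.} Write $N=3t-1+r$. In the clique regime you must exhibit at least
$\binom N3-\binom{3t-1}3=\sum_{i=0}^{r-1}\binom{3t-1+i}2\ge r\binom{3t-1}2$ missing triples.
\begin{itemize}
\item Counting partitions into $t$ triples, as in \cref{lem:blocker}, yields only $\frac1t\binom N3$. This equals $\binom{3t-1}2$ exactly at $N=3t$, which is why Kleitman-type averaging is sharp there. For $N\le N_0$ it stays below $2\binom{3t-1}2$.
\item The leftover-set bound $\binom{r+2}3$ is far smaller, since $r$ is at most about $t/2$ in this range.
\end{itemize}
So the loss is a factor of order $r$, growing to order $t$. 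It is not a constant factor confined to the crossover.

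A sharper per-partition count cannot rescue this. The hypothesis $\nu(\mathcal G)<t$ guarantees only one missing triple per partition. The star $\{G:G\cap[t-1]\ne\emptyset\}$ has exactly one missing triple on every partition that separates its centres. The clique, by contrast, misses on the order of $r$ triples in a typical partition. Separating these two behaviours on the whole range $3t\le N\le N_0$ needs global structural information, not a local count, and that is exactly what Frankl's theorem provides.

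\textbf{The uniqueness clause is also left open.} You observe that shifting destroys the clique structure, but you give no way to recover that structure for the original family. ``Tracing equality through the averaging'' assumes an averaging proof that does not exist.
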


\begin{theorem}[Frankl \cite{FranklShifting1987}]
        \label{thm:frankl-shadow-matching}
        Let $k\ge2$ and $\G\subseteq\binom Vk$ satisfy $\nu(\G)<t$.
        Then $|\G|\le(t-1)\binom {|V|}{k-1}$.
\end{theorem}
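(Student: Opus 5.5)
We argue by double induction on $k$ and $|V|$, after shifting, splitting $\G$ according to whether its sets contain the largest vertex. Identify $V=[n]$. Shifting $S_{ij}$ ($i<j$) preserves $|\G|$ and the condition $\nu(\G)<t$, and the bound depends only on $|\G|$. So we may assume that $\G$ is shifted: if $F\in\G$, $j\in F$, $i<j$ and $i\notin F$, then $(F\setminus\{j\})\cup\{i\}\in\G$. For convenience we allow $k\ge1$ in the induction. The case $k=1$ is immediate, since a family of singletons with $\nu<t$ has at most $t-1=(t-1)\binom n0$ members.

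\textbf{Small ground sets.} If $n\le kt-1$, the bound is trivial:
\[
\binom nk=\frac{n-k+1}{k}\binom n{k-1}\le (t-1)\binom n{k-1},
\]
because $n-k+1\le k(t-1)$. This settles the base of the induction on $n$.

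\textbf{Inductive step for $n\ge kt$.} Split $\G=\G(\bar n)\cup\G(n)$ according to whether $n\notin F$ or $n\in F$.
\begin{enumerate}[label=(\roman*)]
\item The family $\G(\bar n)\subseteq\binom{[n-1]}k$ still has $\nu<t$. Induction on $n$ gives $|\G(\bar n)|\le(t-1)\binom{n-1}{k-1}$.
\item For $\G(n)$, consider the link $\mathcal L=\{F\setminus\{n\}:n\in F\in\G\}\subseteq\binom{[n-1]}{k-1}$.
\end{enumerate}
The key claim is that $\nu(\mathcal L)<t$. Suppose instead that $A_1,\dots,A_t\in\mathcal L$ are pairwise disjoint. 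Their union has size $t(k-1)$, so $[n-1]\setminus\bigcup A_i$ has $n-1-t(k-1)\ge t-1$ elements, using $n\ge kt$. Pick distinct $y_1,\dots,y_{t-1}$ there. By shiftedness, $A_i\cup\{y_i\}\in\G$ for $i\le t-1$. Together with $A_t\cup\{n\}\in\G$, these form $t$ pairwise disjoint members of $\G$, a contradiction.

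Induction on $k$ then gives $|\G(n)|=|\mathcal L|\le (t-1)\binom{n-1}{k-2}$. Adding the two bounds and using Pascal's rule,
\[
|\G|\le (t-1)\left(\binom{n-1}{k-1}+\binom{n-1}{k-2}\right)=(t-1)\binom n{k-1}.
\]

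\textbf{Main obstacle.} The only delicate point is the threshold in the link claim. Naively one would look for $t$ free elements, which would require $n\ge kt+1$. At $n=kt$ the trivial bound genuinely fails, since $\binom{kt}{k}/\binom{kt}{k-1}=t-1+1/k$. The fix is to keep $n$ itself in one of the $t$ sets, so only $t-1$ free replacement elements are needed. This makes the ranges of the two arguments match exactly: $n\le kt-1$ is handled by the trivial bound, and $n\ge kt$ by the inductive step.
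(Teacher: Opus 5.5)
Your argument is correct and complete. The paper does not prove this statement: it quotes it from Frankl's shifting survey and uses it only as a black box, in the second branch of Subcase~3b with $k=4$. So there is no internal proof to compare against, and what you give is a valid self-contained proof.

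The pieces fit together as you say.
\begin{itemize}
\item The trivial bound $\binom nk\le (t-1)\binom n{k-1}$ holds exactly when $n\le kt-1$.
\item For $n\ge kt$, the shifted link argument needs only $t-1$ free vertices, because $A_t\cup\{n\}$ is used unchanged. Hence the two ranges of $n$ meet with no gap.
\item The double induction (outer on $k$ with base $k=1$, inner on $n$) is well founded. The step at $(k,n)$ only calls the cases $(k,n-1)$ and $(k-1,n-1)$, with the same $t$.
\end{itemize}
The one ingredient you use without proof is that shifting does not increase the matching number, $\nu(S_{ij}(\G))\le\nu(\G)$. This is standard, so citing it is fine.

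The same bound also follows at once from Frankl's later, stronger shadow inequality $|\G|\le (t-1)|\partial\G|$ for $\nu(\G)<t$, where $\partial\G$ is the $(k-1)$-shadow, since $|\partial\G|\le\binom{|V|}{k-1}$. That route carries more information but is harder to prove. Your elementary induction proves exactly what the paper needs.
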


For integers $N$ and $u$, define
\[
        h_3(N,u)=\binom N3-\binom{N-u}3+1-\binom{N-u-3}{2}.
\]

\begin{theorem}[Guo, Lu and Mao \cite{GuoLuMao}]\label{thm:GLM}
        There exists an integer $N_0$ such that the following holds for all integers $N\ge N_0$ and $u\ge1$ with $N\ge3u+2$.
        If $G\subseteq\binom{[N]}3$ satisfies $\nu(G)\le u<\tau(G)$, then $|G|\le
                \max\left\{h_3(N,u),\binom{3u+2}{3}\right\}$.
\end{theorem}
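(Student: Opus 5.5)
The statement is the Hilton--Milner-type analogue of the Erd\H{o}s matching conjecture for $k=3$, due to Guo, Lu and Mao. The paper cites it rather than proving it, so the following is only a plan for how one would prove it. It is a stability argument around the two extremal configurations, in three steps.

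\emph{Step 1: the two constructions.} The first construction is the clique $\binom{[3u+2]}3$. The second realizes $h_3(N,u)$: fix a $u$-set $U$, a vertex $x\in U$ and a triple $T$ disjoint from $U$. Take $T$, together with every triple meeting $U$, except those triples meeting $U$ only in $x$ and disjoint from $T$; there are exactly $\binom{N-u-3}2$ such exceptions. One checks directly that both constructions satisfy $\nu\le u<\tau$.

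\emph{Step 2: normalization by restricted shifting.} Let $G$ satisfy $\nu(G)\le u<\tau(G)$. Apply the shifts $S_{ij}$ only as long as the shifted family still has $\tau>u$. Shifting never increases $\nu$ and preserves $|G|$, so either $G$ becomes fully shifted, or every further shift would create a cover of size $u$. In the fully shifted case, the usual shifted-family arguments (as in Frankl's proof of \cref{thm:EMC3}) show that $G$ lies in one of the two extremal configurations. In the "stuck" case, $G$ is one shift away from having a $u$-vertex cover. This forces a structure: a $u$-set $U$ that covers all of $G$ except edges through one specific vertex pair.

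\emph{Step 3: stability on the star side.} Let $U$ be the set of the $u$ vertices of largest degree, and let $G_0$ be the edges disjoint from $U$. Since $\tau(G)>u$, $G_0\neq\emptyset$. If every vertex of $U$ has degree at least about $Cu\,N$, then for any matching $M_0\subseteq G_0$ we can greedily match all but $|M_0|$ vertices of $U$ into disjoint edges avoiding $V(M_0)$. So $\nu(G)\le u$ forces strong restrictions.
\begin{itemize}
\item If $G_0$ contains two disjoint edges, then two vertices of $U$ must have all their private edges meeting a bounded set. This loses roughly $2\binom{N-u}2\gg|G_0|$ edges, which beats $h_3$.
\item If $G_0$ is intersecting, pick $T\in G_0$. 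Some $x\in U$ must have every edge meeting $U$ only in $x$ hit $V(G_0)$. When $G_0$ is a single triple this loses exactly $\binom{N-u-3}2$ edges, giving the bound $h_3(N,u)$. When $G_0$ is larger it loses more, since $|V(G_0)|$ grows only slowly with $|G_0|$ for an intersecting family.
\end{itemize}
If instead some vertex of $U$ has small degree, delete it and induct on $u$: the family $G-x$ has $\nu\le u-1$, and we can compare with \cref{thm:EMC3} and \cref{thm:frankl-shadow-matching}.

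\emph{Step 4: the clique side, and the main obstacle.} When $N$ is close to $3u+2$ (say $N\le(3+\eta)u$), use Frankl's near-range result \cref{thm:npemc} together with a stability version: $G$ is close to $\binom W3$ for some $W$ with $|W|\approx3u+2$. Then $\tau>u$ combined with $\nu\le u$ pins $G$ inside $\binom{W}{3}$ up to a few edges, and a direct exchange argument gives $|G|\le\binom{3u+2}3$.

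I expect the main obstacle to be the intermediate range $N\approx(3+\Theta(1))u$. There the two bounds are of comparable order, and neither stability statement applies with room to spare. I would first try to make the restricted shifting of Step 2 carry the whole argument in this range, reducing to a finite case analysis on shifted families with one "defect" edge. I would fall back on degree-version EMC stability (Frankl--Kupavskii) only if that fails. The requirement $N\ge N_0$ absorbs the lower-order error terms in both stability steps.
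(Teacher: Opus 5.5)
The paper gives no proof of this statement; it is imported from Guo, Lu and Mao. Judged on its own, your plan has a genuine gap, and the gap is exactly where the content of the theorem lies.

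\textbf{Both easy regimes are handled differently than you propose.}

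Whenever $\binom{3u+2}3\ge\binom N3-\binom{N-u}3$, \cref{thm:EMC3} with $t=u+1$ already gives $|G|\le\binom{3u+2}3$, without using $\tau(G)>u$. To leading order this covers $N\lesssim\frac{3+\sqrt{321}}{6}\,u\approx3.49u$. So your Step~4, which runs a stability argument near $N=3u+2$ via \cref{thm:npemc}, treats a regime that costs nothing. (The window $N<(3+\eta_3)(u+1)$ of \cref{thm:npemc} is tiny anyway.)

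For $N\ge C'u$ a standard degree argument works. Let $U$ be the set of vertices of degree more than $3u(N-2)$, and let $G-U$ denote the edges avoiding $U$.
\begin{itemize}
\item Greedy extension gives $|U|\le u-1$. Indeed, if $|U|=u$, any edge avoiding $U$ (one exists since $\tau(G)>u$) extends to a $(u+1)$-matching. Greedy extension also gives $\nu(G-U)\le u-|U|$.
\item If $|U|=u-1$, then $G-U$ is intersecting and is not a star, since otherwise $\tau(G)\le u$. The Hilton--Milner theorem on $N-u+1$ vertices then gives exactly $h_3(N,u)=\binom N3-\binom{N-u+1}3+\binom{N-u}2-\binom{N-u-3}2+1$.
\item If $|U|\le u-2$, then $|G-U|\le 9u(u-|U|)N$. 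This holds because every edge meets the at most $3(u-|U|)$ vertices of a maximum matching, each of low degree. For $N\ge C'u$ this is beaten by $\binom{N-|U|}3-\binom{N-u+1}3\ge(u-1-|U|)\binom{N-u+1}2$.
\end{itemize}

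\textbf{The missing window.} What your proposal lacks is the range $3.49u\lesssim N\le C'u$.
\begin{itemize}
\item There the greedy threshold is comparable to the largest possible degree $\binom{N-1}2$, and for $N\le 6u$ even exceeds it, so degree arguments collapse.
\item The EMC bound $\binom N3-\binom{N-u}3$ is larger than the target $\max\{h_3(N,u),\binom{3u+2}3\}$, by as much as $\binom{N-u-3}2-1$.
\item Closing this requires a stability theorem for the $3$-uniform EMC that converts $\tau(G)>u$ into exactly such a deficit.
\end{itemize}
You explicitly defer this (``I would first try \dots''), but it is the whole difficulty of the cited result.

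\textbf{Errors in the steps you do write.} Step~1 is fine; Steps~2 and~3 are not.

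In Step~2, a fully shifted $G$ with $\nu(G)\le u<\tau(G)$ need not lie in either extremal configuration. The family $\{F\in\binom{[N]}3:|F\cap[2u+1]|\ge2\}$ is shifted, has $\nu\le u$ and $\tau=2u$. Yet for $N>3u+2$ every vertex lies in an edge, so it sits in no $\binom W3$ with $|W|=3u+2$. It also has at least two edges disjoint from every $u$-set, whereas the $h_3$ configuration has only one. Shifting therefore only reduces the task to bounding the size of shifted families, which is again the full problem. In the stuck case you merely learn that some $u$-set misses only edges through one fixed vertex, i.e.\ $\tau(G)=u+1$; a size bound is still owed.

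In Step~3 the degree logic is inverted.
\begin{itemize}
\item If every vertex of $U$ has degree at least $CuN$, with $C$ large enough for the greedy step to work, then all of $U$ can be matched disjointly from any edge of $G_0\ne\emptyset$, not merely all but $|M_0|$ of it. This yields a $(u+1)$-matching, so the branch is vacuous and both of your bullets analyse an impossible case.
\item Conversely, deleting a low-degree vertex $x$ does not yield $\nu(G-x)\le u-1$. It is removing high-degree vertices that lowers $\nu$.
\item The $h_3$ configuration itself has its special vertex of degree only about $(u+2)N$, so it lands in the branch you handle incorrectly.
\end{itemize}
Finally, $h_3(N,u)$ lies only $\binom{N-u-3}2-1$ below the EMC bound. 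Estimates such as ``loses roughly $2\binom{N-u}2$'' must therefore be replaced by exact counts.
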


For $Q\ge 0$, families $\G_1,\ldots,\G_p\subseteq 2^X$
are called \emph{$Q$-dependent} if there are no pairwise
disjoint sets $G_i\in \G_i$ such that $|G_1\cup\cdots\cup G_p|\le Q$.
\begin{theorem}[Frankl and Kupavskii \cite{FKstability}]
        \label{thm:FK-q-dependent}
        Let $p\ge2$, $q\ge0$, and $\lambda\in[p]$ be integers, and write $Q=pq+p-\lambda$.
        Suppose that $N\ge Q$ and that $\G_1,\ldots,\G_p\subseteq2^{[N]}$ are $Q$-dependent.
        Then
        \[
                \sum_{i=1}^p |\G_i|
                \le
                (\lambda-1)\binom Nq
                +p\sum_{j=q+1}^N\binom Nj .
        \]
\end{theorem}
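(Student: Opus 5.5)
We would prove \cref{thm:FK-q-dependent} by averaging over random partitions, after first reducing to up-sets. The reduction is as follows. Replacing each $\G_i$ by its up-closure $\G_i^{\uparrow}=\{F\subseteq[N]:F\supseteq G\text{ for some }G\in\G_i\}$ preserves $Q$-dependence. Indeed, if $F_1\in\G_1^{\uparrow},\ldots,F_p\in\G_p^{\uparrow}$ are pairwise disjoint with $|F_1\cup\cdots\cup F_p|\le Q$, then choosing $G_i\subseteq F_i$ with $G_i\in\G_i$ gives pairwise disjoint members of the original families with an even smaller union. Since $|\G_i|\le|\G_i^{\uparrow}|$, we may assume every $\G_i$ is an up-set. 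Write $\G_i^{(j)}=\G_i\cap\binom{[N]}j$ and $x_{i,j}=|\G_i^{(j)}|/\binom Nj$. For up-sets the normalized matching (LYM/Kruskal--Katona) inequality gives $x_{i,j}\le x_{i,j+1}$. The layers $j\ge q+1$ contribute at most $p\sum_{j\ge q+1}\binom Nj$ trivially. So the whole task is to show that the layers $j\le q$ contribute at most $(\lambda-1)\binom Nq$, after trading against any deficit in the layers $j\ge q+1$.

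The core step is a Katona-type averaging. Since $Q=(p-\lambda)(q+1)+\lambda q$, a uniformly random $Q$-subset $Y\subseteq[N]$ (we have $N\ge Q$) can be split uniformly at random into an ordered partition $Y=B_1\sqcup\cdots\sqcup B_p$. Here exactly $\lambda$ blocks have size $q$ and $p-\lambda$ blocks have size $q+1$, and the positions of the short blocks are also chosen uniformly at random. Each $B_r$ is uniformly distributed in its layer. $Q$-dependence says that for every permutation $\pi$ of $[p]$ we cannot have $B_r\in\G_{\pi(r)}$ for all $r$. Averaging over $\pi$ as well, one obtains a linear inequality of the form
\[
\sum_{i}\Bigl(\lambda\, x_{i,q}+(p-\lambda)\,x_{i,q+1}\Bigr)\le p(p-1)+(\text{correction}),
\]
using that the events $\{B_r\in\G_{\pi(r)}\}$ for fixed $\pi$ cannot all occur. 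A union-bound type argument is too weak here. Instead, we would use the following fact, which is the real content: for a fixed random partition, the bipartite graph between blocks and families, with $B_r\sim i$ iff $B_r\in\G_i$, has no perfect matching. Applying Hall's/K\"onig's theorem, the graph has a vertex cover of size at most $p-1$. Counting the edges of such a graph, given that short blocks are subsets of long ones only up to relabelling, yields a bound on the expected number of edges from the $\lambda$ short blocks of at most $\lambda-1$, plus the long-block edges, once the up-set property $x_{i,q}\le x_{i,q+1}$ is fed in. Taking expectations converts the edge count into $\sum_i x_{i,q}\le\lambda-1+\sum_i(1-x_{i,q+1})\cdot(\text{weight})$. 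Multiplying through by $\binom Nq$ gives the layer-$q$ bound.

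The lower layers $j<q$ then have to be absorbed. Here we would use that if $\G_i$ contains a $j$-set with $j<q$, then the up-closure forces large densities in layers $q$ and $q+1$. We then run the same averaging with block sizes shifted down, i.e.\ apply the argument to the pair of layers $(j,j+1)$ with the slack $Q-(\text{sum of block sizes})$ absorbed into a long block, and sum the resulting inequalities with the binomial weights. We expect this bookkeeping across layers, i.e.\ choosing the right convex combination of the per-layer averaging inequalities so that everything below layer $q$ is dominated by $(\lambda-1)\binom Nq$ plus the slack in layers $\ge q+1$, to be the main obstacle. The first thing to try is to induct on $q$, treating the restriction of $\G_1,\ldots,\G_p$ to sets of size $\le q$ as a $(Q-p)$-dependent system on a smaller $q$.
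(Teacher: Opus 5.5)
The paper does not prove this theorem (it quotes it from Frankl and Kupavskii), so your argument has to stand on its own. The up-closure reduction and $x_{i,q}\le x_{i,q+1}$ are fine. The core averaging step, however, cannot produce the constant $\lambda-1$.

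For a fixed partition with $\lambda$ short and $p-\lambda$ long blocks in prescribed positions, the only information is that the block--family graph has no perfect matching. The resulting edge bound $p(p-1)$ is attained: let $\lambda-1$ short blocks lie in every $\G_i$, the last short block in none, and every long block in every $\G_i$. So averaging per-partition edge counts yields at best
\[
\lambda\sum_i x_{i,q}\le p(\lambda-1)+(p-\lambda)\sum_i\bigl(1-x_{i,q+1}\bigr),
\]
which gives the constant $p(\lambda-1)/\lambda$. This exceeds $\lambda-1$ whenever $1<\lambda<p$. The up-set inequality does not help, because in the critical regime all $(q+1)$-layers are full.

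Concretely, take $p=3$, $\lambda=2$, $N=Q=3q+1$, and $\G_1=\G_2=\G_3=\binom{[N]}{\ge q+1}\cup\mathcal I$ with $\mathcal I\subseteq\binom{[N]}{q}$ intersecting. The theorem asserts $|\mathcal I|\le\frac13\binom{3q+1}{q}$, an Erd\H{o}s--Ko--Rado-type statement. Your partitions only record that the two short blocks are not both in $\mathcal I$, which gives $\frac12\binom{3q+1}{q}$.

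Your intermediate claim that the expected number of short-block edges is at most $\lambda-1$ is false. The system $\G_1=\cdots=\G_{\lambda-1}=\binom{[N]}{\ge q}$, $\G_\lambda=\cdots=\G_p=\binom{[N]}{\ge q+1}$ is $Q$-dependent and attains the bound. In it every short block has exactly $\lambda-1$ edges, so the expectation is $\lambda(\lambda-1)$. The line $\sum_i x_{i,q}\le\lambda-1+\cdots$ therefore does not follow from anything you derived.

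What is missing is the freedom to choose \emph{which} blocks are short. In the full-upper-layer regime, averaging over $p$ pairwise disjoint $q$-sets does work. $Q$-dependence forbids a matching of size $\lambda$ in the $p\times p$ block--family graph, since the remaining $p-\lambda$ families could be given disjoint $(q+1)$-sets inside the other $N-\lambda q\ge(p-\lambda)(q+1)$ points. K\"onig then gives at most $p(\lambda-1)$ edges, hence $\sum_i x_{i,q}\le\lambda-1$.

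The general statement still needs this traded off against two further quantities:
\begin{itemize}
\item Missing $(q+1)$-sets, where again the choice of which blocks get enlarged must be left free (compare the overflow-graph argument in the proof of \cref{lem:p-ordered-local-blocker}).
\item The layers below $q$.
\end{itemize}
The second is a further gap, which you acknowledge, and the induction on $q$ you suggest does not close it. The $(q-1)$-instance applied to $\G_i\cap\binom{[N]}{\le q}$ has right-hand side $(\lambda-1)\binom N{q-1}+p\sum_{j\ge q}\binom Nj$. Since $N\ge 2q$, this already exceeds the trivial bound $p\sum_{j\le q}\binom Nj$, so it says nothing about trading small sets against missing large ones.
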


The following result is a direct corollary of \cref{thm:FK-q-dependent}.
\begin{corollary}
        \label{cor:m3-nonuniform-terminal-blocker}
        Let $u\ge2$, $0\le h<u$, and let $W$ be a set of size $4u+h$.
        Let $\G\subseteq\binom W{\ge4}$.
        If $\nu(\G)<u$, then
        \[
                \left|\binom W{\ge4}\setminus\G\right|
                \ge
                \frac{h+1}{u}\binom{4u+h}{4}.
        \]
\end{corollary}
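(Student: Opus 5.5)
We derive the corollary from \cref{thm:FK-q-dependent} by taking all $p$ families equal to $\G$. Set $N=|W|=4u+h$ and $p=u$. We want $Q=N=4u+h$ written as $pq+p-\lambda$ with $\lambda\in[p]$. Take $q=4$, so $pq+p=5u$, and $\lambda=5u-(4u+h)=u-h$. The hypothesis $0\le h<u$ gives exactly $1\le\lambda\le u=p$, and $p=u\ge2$ as required.

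Next we check that $\G_1=\cdots=\G_u=\G$ are $Q$-dependent, where we identify $W$ with $[N]$. Since $Q=N$, the condition $|G_1\cup\cdots\cup G_u|\le Q$ holds automatically for any sets. So $Q$-dependence just says there are no $u$ pairwise disjoint members of $\G$. The only subtlety is that the $G_i$ are drawn from the same family and must be pairwise disjoint. Members of $\G\subseteq\binom W{\ge4}$ are nonempty, so pairwise disjoint sets are distinct, and such a choice would be a $u$-matching. This is excluded by $\nu(\G)<u$.

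Applying the theorem gives
\[
 u|\G|\le (u-h-1)\binom{N}{4}+u\sum_{j\ge5}\binom Nj .
\]
Since $\bigl|\binom W{\ge4}\bigr|=\binom N4+\sum_{j\ge5}\binom Nj$, we get
\[
 \left|\binom W{\ge4}\setminus\G\right|\ge \binom N4-\frac{u-h-1}{u}\binom N4=\frac{h+1}{u}\binom{4u+h}4 .
\]
This is the claim.

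The main step requiring care is choosing the parameters so that $\lambda$ lands in $[p]$, and this is exactly where $h<u$ is used. The rest is direct substitution.
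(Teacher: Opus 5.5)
Your proposal is correct and is essentially the paper's proof: apply \cref{thm:FK-q-dependent} with $p=u$, $q=4$, $\lambda=u-h$, so that $Q=4u+h=|W|$, and subtract the resulting bound from $u\bigl|\binom W{\ge4}\bigr|$. Your remark that pairwise disjoint members of $\G$ are automatically distinct, since they are nonempty, justifies a step the paper leaves implicit.
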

\begin{proof}
        Apply \cref{thm:FK-q-dependent} with $p=u$, $q=4$, and $\lambda=u-h$.
        Since $\nu(\G)<u$, there are no pairwise disjoint
        $G_1,\ldots,G_u\in\G$.
        Hence the $u$ identical families
        $\G_1=\cdots=\G_u=\G$ are $Q$-dependent for every $Q$,
        in particular for $Q=4u+h$.
        Moreover,
        \[
                pq+p-\lambda=4u+u-(u-h)=4u+h=|W|.
        \]
        Hence
        \[
                u|\G|
                \le
                (u-h-1)\binom{|W|}{4}
                +u\sum_{j=5}^{|W|}\binom{|W|}{j}.
        \]
        Subtracting this from $u\binom{|W|}{\ge4}=u\binom{|W|}{4}+u\sum_{j=5}^{|W|}\binom{|W|}{j}$ gives the desired result.
\end{proof}

\section{Comparison setup}\label{sec:comparison}

In this section we set up the comparison with
$\mathcal P'(m,s,\ell)$.
Fix $m,s,c,\ell,n$ with $m\ge3$, $n=ms+c$, and $\ell=s-c$ from now on.
For a family $\F\subseteq2^{[n]}$ and $k\ge0$, write $\F_i=\F\cap\binom{[n]}i$, $\F_{<m}=\bigcup_{i=0}^{m-1}\F_i$, $H=\F_m$, $\F_{\ge k}=\F\cap\binom{[n]}{\ge k}$, $\Y_i=\binom{[n]}i\setminus\F_i$ and $\Y_{\ge k}=\binom{[n]}{\ge k}\setminus\F_{\ge k}$.

We first use the following simple normalization to remove the empty set.
\begin{lemma}\label{lem:empty-reduction}
        Let $n\ge s\ge2$, and let $\F\subseteq2^{[n]}$ satisfy $\nu(\F)<s$.
        If $\emptyset\in\F$, then there is a family $\F'\subseteq2^{[n]}$ such that $|\F'|=|\F|$, $\nu(\F')<s$, $\emptyset\notin\F'$, and $\F'_1\ne\emptyset$.
\end{lemma}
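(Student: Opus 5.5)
We observe first that if $\emptyset\in\F$, then $\F$ can contain no $s-1$ pairwise disjoint nonempty members: such members together with $\emptyset$ would form an $s$-matching. So $\F\setminus\{\emptyset\}$ has matching number at most $s-2$. We also note that if some singleton $\{x\}$ lies in $\F$, then $\F$ already has $\F_1\neq\emptyset$, and we only need to remove $\emptyset$ while adding some other set. The plan is therefore to replace $\emptyset$ by a suitable singleton and check that the matching number stays below $s$.

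Concretely, we define $\F'=(\F\setminus\{\emptyset\})\cup\{\{x\}\}$ for a singleton $\{x\}\notin\F$. We need such an $x$ to exist. If every singleton $\{1\},\dots,\{n\}$ were in $\F$, then, since $n\ge s$, the sets $\{1\},\dots,\{s\}$ would already form an $s$-matching, a contradiction. Hence some $\{x\}\notin\F$ exists. Then $|\F'|=|\F|$, $\emptyset\notin\F'$, and $\{x\}\in\F'_1$, so $\F'_1\neq\emptyset$.

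It remains to verify $\nu(\F')<s$. Suppose $A_1,\dots,A_s$ were pairwise disjoint members of $\F'$. Since $\F'\setminus\{\{x\}\}\subseteq\F\setminus\{\emptyset\}$ has matching number at most $s-2$, at least two of the $A_i$ must equal $\{x\}$. They are distinct members of the family, so exactly one of them equals $\{x\}$, and the remaining $s-1$ sets form an $(s-1)$-matching in $\F\setminus\{\emptyset\}$. This contradicts the bound above. Thus $\nu(\F')<s$.

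The only delicate point is the bound $\nu(\F\setminus\{\emptyset\})\le s-2$, which uses that the empty set is disjoint from everything, including itself as a distinct member. The existence of the missing singleton uses $n\ge s$. Both steps are straightforward, and I expect no real obstacle.
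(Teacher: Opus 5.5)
Your proof is correct and is essentially the paper's argument: you make the same replacement of $\emptyset$ by a singleton $\{x\}\notin\F$ (which exists because $n\ge s$), and you check $\nu(\F')<s$ via $\nu(\F\setminus\{\emptyset\})\le s-2$ where the paper swaps $\{x\}$ back to $\emptyset$, which amounts to the same thing. Tidy the last step by saying that at most one $A_i$ can equal $\{x\}$, so at least $s-1$ of them form a matching in $\F\setminus\{\emptyset\}$; as written, ``at least two of the $A_i$ must equal $\{x\}$'' followed by ``exactly one'' reads as self-contradictory, although either reading does yield the contradiction.
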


\begin{proof}
        Since $n\ge s$ and $\nu(\F)<s$, not all singletons can belong to $\F$.
        Choose $x\in[n]$ with $\{x\}\notin\F$, and set $\F'=(\F\setminus\{\emptyset\})\cup\{\{x\}\}$.
        $|\F'|=|\F|$, $\emptyset\notin\F'$, and $\F'_1\ne\emptyset$ by construction.
        If an $s$-matching in $\F'$ avoids $\{x\}$, then it is already an $s$-matching in $\F$.
        If it uses $\{x\}$, replacing $\{x\}$ by $\emptyset$ gives an $s$-matching in $\F$.
        Thus $\nu(\F')<s$.
\end{proof}

The basic comparison with $\mathcal P'(m,s,\ell)$ is the following equation:
\begin{equation}\label{eq:layer-balance}
        |\F|-|\mathcal P'(m,s,\ell)|
        =
        |\F_{<m}|+|H|-\binom{m\ell-1}{m}
        -|\Y_{\ge m+1}|.
\end{equation}
Indeed, $|\F|
        =
        |\F_{<m}|+|H|+
        \sum_{i=m+1}^{n}\binom ni
        -|\Y_{\ge m+1}|$, while $|\mathcal P'(m,s,\ell)|
        =
        \binom{m\ell-1}{m}+
        \sum_{i=m+1}^{n}\binom ni$.
Thus, to prove $|\F|\le |\mathcal P'(m,s,\ell)|$, it is enough to show
\[
        |\F_{<m}|+|H|-\binom{m\ell-1}{m}
        \le |\Y_{\ge m+1}|.
\]

\section{Proof of \cref{thm:main}}
\label{sec:general-proof}

Fix $m\ge3$, and let $\delta_m$ be defined by \eqref{eq:eta-def}.
Let $\kappa=\kappa_{\ref{lem:residual-blocker}}(m+1)$ be the constant from \cref{lem:residual-blocker} with $k=m+1$.
Choose $\beta_m$ sufficiently large such that, for all $s\ge1$ and all $n\le(m+1)s$,
\begin{equation}\label{eq:K-choice}
        2m\binom n{m-1}\le \beta_m^m s^{m-1}
        \text{ and }
        4m\binom n{m-1}\le \kappa \beta_m^m s^{m-1}.
\end{equation}
Choose $s_0=s_0(m)$ sufficiently large so that all applications of \cref{thm:npemc,lem:residual-blocker,lem:gap} below are valid and all estimates requiring $s$ to be large hold.

Let $s\ge s_0$, let $\beta_m s^{(m-1)/m}\le c\le\delta_m s$, write $\ell=s-c$ and $n=ms+c$ and let $\F\subseteq2^{[n]}$ satisfy $\nu(\F)<s$.
We use the notation from \cref{sec:comparison}.
By \cref{lem:empty-reduction}, we may and do assume that $\emptyset\notin\F$.
By \eqref{eq:layer-balance}, it is enough to prove $|\F_{<m}|+|H|-\binom{m\ell-1}{m}
        \le
        |\Y_{\ge m+1}|$.
We shall prove the stronger estimate
\begin{equation}\label{eq:target}
        |\F_{<m}|+|H|-\binom{m\ell-1}{m}
        \le
        |\Y_{m+1}|.
\end{equation}
Also, as $\emptyset\notin\F$,
\begin{equation}\label{eq:low-layer-bound}
        |\F_{<m}|
        =
        \sum_{i=1}^{m-1}|\F_i|
        \le
        \sum_{i=1}^{m-1}\binom ni
        \le
        m\binom n{m-1}.
\end{equation}

The behavior of the $m$-layer is decisive.
We split according to whether $H$ has an $\ell$-matching.

\subsection{Case 1: $\nu(H)<\ell$}

In this case, we have $\nu(H)<\ell$.

Since \(s\ge s_0\), we have \(\ell\ge t_m\), where $t_m$ is the constant from \cref{thm:npemc}.
Also, since $n=m\ell+(m+1)c$, \cref{lem:window} gives $m\ell\le n<(m+\eta_m)\ell$.
Thus the hypotheses of \cref{thm:npemc} are satisfied.
Applying \cref{thm:npemc} gives
\begin{equation}\label{eq:H-case1-bound}
        |H|\le\binom{m\ell-1}{m}.
\end{equation}
If $|\F_{<m}|=0$, then \eqref{eq:target} follows from \eqref{eq:H-case1-bound}.
Assume $|\F_{<m}|>0$, and choose $E\in\F_j$ for some $1\le j\le m-1$.

Let $p=\ell+j-m-1$.
The value of $p$ is chosen so that one lower-layer set, $p$ disjoint $m$-sets, and $c+m-j$ disjoint $(m+1)$-sets would together form an $s$-matching.
For sufficiently large $s$, we have $p\ge1$,
\begin{equation}\label{eq:p-identities}
        1+p+(c+m-j)=s,
        \text{ and }
        n-j-mp=(m+1)(c+m-j).
\end{equation}
Let $H(\overline E)=\{Q\in H:Q\cap E=\emptyset\}$.
There are two subcases.

\medskip
\noindent{\bf Subcase 1a:} $H(\overline E)$ contains a $p$-matching.
\medskip

In this case, choose pairwise disjoint sets $Q_1,\ldots,Q_p\in H(\overline E)$.
Then $E,Q_1,\ldots,Q_p$ are pairwise disjoint members of $\F$.
Let $W=[n]\setminus(E\cup Q_1\cup\cdots\cup Q_p)$ and $\tau=c+m-j$.
By \eqref{eq:p-identities}, $|W|=n-j-mp=(m+1)(c+m-j)=(m+1)\tau$ and $1+p+\tau=1+(\ell+j-m-1)+(c+m-j)=\ell+c=s$.
If $\F_{m+1}\cap\binom W{m+1}$ contained $\tau$ pairwise disjoint sets, then these sets together with $E,Q_1,\ldots,Q_p$ would form an $s$-matching in $\F$, a contradiction.
Hence $\F_{m+1}\cap\binom W{m+1}$ has no $\tau$-matching.

Since $|W|=(m+1)\tau$, applying \cref{lem:blocker} gives $|\Y_{m+1}|\ge\left|\binom W{m+1}\setminus \F_{m+1}\right|\ge \tau^{-1}\binom{(m+1)\tau}{m+1}\ge\tau^m\ge c^m$, where the last inequality follows from $\tau=c+m-j\ge c$.
Since $c\ge \beta_m s^{(m-1)/m}$, \eqref{eq:K-choice} and \eqref{eq:low-layer-bound} give $|\F_{<m}|\le m\binom n{m-1}\le \beta_m^m s^{m-1}\le c^m\le |\Y_{m+1}|$.
Together with \eqref{eq:H-case1-bound}, this proves \eqref{eq:target} in this subcase.

\medskip
\noindent{\bf Subcase 1b: }$H(\overline E)$ has no $p$-matching.
\medskip

The family $H(\overline E)$ is a subfamily of $\binom{[n]\setminus E}{m}$ and $|[n]\setminus E|=N=n-j=mp+(m+1)(c+m-j)$.
Our goal here is to bound $|H(\overline E)|$ by \cref{thm:npemc}.
It remains to verify $N<(m+\eta_m)p$, equivalently $(m+1)(c+m-j)<\eta_m p$.
Using $p=\ell+j-m-1$, we have
\[
        \begin{aligned}
                \eta_m p-(m+1)(c+m-j)
                 & = \eta_m\ell-(m+1)c -\eta_m(m+1-j) -(m+1)(m-j)                \\
                 & \ge  \frac{\eta_m^2}{m+1+2\eta_m}s -\eta_m(m+1-j) -(m+1)(m-j) \\
                 & =\frac{\eta_m^2}{m+1+2\eta_m}s-O(1),
        \end{aligned}
\]
where the inequality follows from \cref{lem:window}.
Since $s_0$ is sufficiently large, we have $mp\le N<(m+\eta_m)p$.
\cref{thm:npemc} gives $|H(\overline E)|\le\binom{mp-1}{m}$.
The number of $m$-sets meeting $E$ is at most $\binom nm-\binom{n-j}m\le j\binom n{m-1}$.
Therefore $|H|\le j\binom n{m-1}+\binom{mp-1}{m}$.
Combining this with \eqref{eq:low-layer-bound} and \cref{lem:gap} gives $|\F_{<m}|+|H|\le(m+j)\binom n{m-1}+\binom{mp-1}{m}<\binom{m\ell-1}{m}$.
This proves \eqref{eq:target} in this subcase.

\subsection{Case 2: $\nu(H)\ge\ell$}

Write $\nu(H)=\ell+d$.
Since $\F$ has no $s$-matching and $s=\ell+c$, we must have $d\le c-1$.
Thus $0\le d\le c-1$.
The family $H$ has no $(\ell+d+1)$-matching.
Since $d\le c-1$, we have $m(\ell+d+1)\le n$.
Also
\[
        n-m(\ell+d+1)=(m+1)c-m(d+1)
        \le(m+1)c
        <\eta_m\ell
        \le\eta_m(\ell+d+1),
\]
by \cref{lem:window}.
Thus $m(\ell+d+1)\le n<(m+\eta_m)(\ell+d+1)$, and \cref{thm:npemc} gives $|H|\le\binom{m(\ell+d+1)-1}{m}$.
Consequently,
\[
        \begin{aligned}
                |H|-\binom{m\ell-1}{m}\le
                \sum_{a=0}^{m(d+1)-1}\binom{m\ell+a-1}{m-1}\le
                m(d+1)\binom n{m-1}.
        \end{aligned}
\]
Together with \eqref{eq:low-layer-bound}, this gives
\begin{equation}\label{eq:case2-excess}
        |\F_{<m}|+|H|-\binom{m\ell-1}{m}
        \le
        2m(d+1)\binom n{m-1}.
\end{equation}
Choose $\ell+d$ pairwise disjoint members $Q_1,\ldots,Q_{\ell+d}$ of $H$.
Let $W=[n]\setminus(Q_1\cup\cdots\cup Q_{\ell+d})$ and $\tau=c-d$.
Then $\tau\ge1$ and $|W|=(m+1)\tau+d$.
If $\F_{m+1}\cap\binom W{m+1}$ contained $\tau$ pairwise disjoint sets, then these sets together with $Q_1,\ldots,Q_{\ell+d}$ would form an $s$-matching in $\F$.
Hence, $\nu(\F_{m+1}\cap\binom W{m+1})<\tau$.
Applying \cref{lem:residual-blocker} to $\F_{m+1}\cap\binom W{m+1}$ with $k=m+1$, $\rho=d$, and $\gamma=\tau+d=c$, we obtain $|\Y_{m+1}|\ge\kappa(d+1)c^m$.
Since $c\ge \beta_m s^{(m-1)/m}$, \eqref{eq:K-choice} gives
\[
        |\Y_{m+1}|
        \ge
        \kappa(d+1)\beta_m^m s^{m-1}
        \ge
        4m(d+1)\binom n{m-1}.
\]
Together with \eqref{eq:case2-excess}, this proves \eqref{eq:target} in Case~2.

Therefore $|\F|\le |\mathcal P'(m,s,\ell)|$ for every family $\F$ with $\nu(\F)<s$.

\subsection{Equality cases}
First suppose that $\emptyset\notin\F$ and $|\F|=|\mathcal P'(m,s,\ell)|$.
Since $|\F|=|\mathcal P'(m,s,\ell)|$, \eqref{eq:layer-balance} gives
\begin{equation}\label{eq:equality-balance}
        |\Y_{\ge m+1}|
        =
        |\F_{<m}|+|H|-\binom{m\ell-1}{m}.
\end{equation}
Equality cannot occur in Case~2 since we have
\[
        |\Y_{m+1}|
        \ge
        4m(d+1)\binom n{m-1}
        >
        2m(d+1)\binom n{m-1}
        \ge
        |\F_{<m}|+|H|-\binom{m\ell-1}{m}
\]
in Case~2.
Since $|\Y_{\ge m+1}|\ge |\Y_{m+1}|$, this contradicts \eqref{eq:equality-balance}.

Thus equality must occur in Case~1.
If $|\F_{<m}|>0$, take $E\in\F_j$ with $1\le j\le m-1$, as in the proof of Case~1.
In Subcase~1a, we obtain
\[
        |\Y_{m+1}|
        \ge c^m
        \ge \beta_m^m s^{m-1}
        \ge 2m\binom n{m-1}
        > |\F_{<m}|.
\]
Together with $|H|\le\binom{m\ell-1}{m}$, this again contradicts \eqref{eq:equality-balance}.
In Subcase~1b, we obtain $|\F_{<m}|+|H|<\binom{m\ell-1}{m}$, making the right-hand side of \eqref{eq:equality-balance} negative, while $|\Y_{\ge m+1}|\ge0$.
Hence equality implies $|\F_{<m}|=0$.

With $|\F_{<m}|=0$, equality is possible only in Case~1 and hence $\nu(H)<\ell$.
By \cref{thm:npemc}, $|H|\le\binom{m\ell-1}{m}$.
Equation \eqref{eq:equality-balance} becomes
\[
        |\Y_{\ge m+1}|=|H|-\binom{m\ell-1}{m}.
\]
The left-hand side is non-negative, whereas the right-hand side is non-positive.
Therefore both sides are zero.
Hence $|H|=\binom{m\ell-1}{m}$ and $\Y_i=\emptyset$ for every $i\ge m+1$.
\cref{thm:npemc} gives a set $L'\in\binom{[n]}{m\ell-1}$ such that $H=\binom{L'}m$.
This implies that $\F=\mathcal P'(m,s,\ell;L')$.

It remains to exclude extremal families containing $\emptyset$.
Suppose that such a family exists.
By \cref{lem:empty-reduction}, one can replace $\emptyset$ by a missing singleton and obtain an extremal family $\F'$ with $\emptyset\notin\F'$ and $\F'_1\ne\emptyset$.
The equality classification just proved forces $\F'=\mathcal P'(m,s,\ell;L')$ for some $L'$, but $\mathcal P'(m,s,\ell;L')$ has no singleton since $m\ge3$, a contradiction.
This completes the proof.

\medskip

\noindent {\bf Remark.} We make no attempt to optimize the constants $\beta_m$ and $\delta_m$.
The value of $\delta_m$ mainly depends on the result in \cref{thm:npemc}.
More precisely, we need to use the assumption $kt\le N<(k+\eta_k)t$ in \cref{thm:npemc} with $\eta_k=1/(2k^{2k+1})$.
This forces us to ensure $n=m\ell+(m+1)c < (m+\eta_m)\ell$ holds, thus $c<\eta_m\ell/(m+1)$.
Thus we take
\[
        \delta_m=\frac{\eta_m}{m+1+2\eta_m}
        =
        \frac{1}{2(m+1)m^{2m+1}+2}.
\]
The constant $\beta_m$ is only chosen to dominate the number of sets with size at most $m-1$ and some extra losses come from \cref{lem:residual-blocker}.
From the proof of Lemma~\ref{lem:residual-blocker}, one may take
\[
        \kappa_{\ref{lem:residual-blocker}}(m+1)
        =
        \frac{1}{2(m+1)!\bigl(4(m+1)^{2m+3}+1\bigr)^m}
        \text{ and }
        \beta_m
        =
        (8m^2)^{1/m}(m+1)\bigl(4(m+1)^{2m+3}+1\bigr).
\]

\section{Proof of \cref{thm:m3-high-range}}\label{sec:m3-high}

Fix $\varepsilon>0$ and let $\alpha=(4/3)^{1/3}$.
Recall that
\[
        t(s)=\frac{17-18s+\sqrt{49-852s+1284s^2}}{20}= 0.8916\cdots s+O(1).
\]
Let $s$ be sufficiently large and let $\ell$ satisfy $t(s)<\ell<s-(\alpha+\varepsilon)s^{2/3}$.
Let $n=3s+c=4s-\ell,\ell=s-c,a=\ell-1$ and $r_0=n-a=2s+2c+1$.
Then $s-t(s)>c>(\alpha+\varepsilon)s^{2/3}$.
Also $c<0.11s$ for all sufficiently large $s$.

Throughout this section, take $m=3$ in \cref{sec:comparison}.
For each family $\F\subseteq2^{[n]}$, write $\Y_{\ge4}=\binom{[n]}{\ge4}\setminus\F_{\ge4}=\bigcup_{i=4}^n\Y_i$.
For a family $\G$ and a set $E$, write $\G(\overline E)=\{G\in\G:G\cap E=\emptyset\}$.
Define $A_3=\binom{3\ell-1}{3}=\binom{3s-3c-1}{3}$, and $\Lambda_3=\binom a2+\binom n3-\binom{r_0}{3}$ and $B_{\le2}=(s-1)+(s-1)(2n-s)/2$.
The range $c<s-t(s)$, equivalently $\ell>t(s)$, gives
\begin{equation}\label{eq:A3-beats-Lambda3}
        A_3>\Lambda_3.
\end{equation}
Indeed, direct expansion gives $\Lambda_3-A_3=
        \frac{\ell-1}{3}
        \left(24s^2-6s-6-(18s-17)\ell-10\ell^2\right)$.
The polynomial $Q_s(x)=10x^2+(18s-17)x-24s^2+6s+6$ has positive leading coefficient and $t(s)$ is its positive root.
Since $\ell>t(s)$, we have $Q_s(\ell)>0$. As $\ell>1$, it follows that $\Lambda_3-A_3<0$, proving \eqref{eq:A3-beats-Lambda3}.

For $i\in\{1,2\}$, let
\begin{equation}\label{eq:m3-pi-Ni-Li}
        p_i=\ell+i-4, \text{ }
        N_i=n-i \text{ and }
        L_i=\binom n3-\binom{n-i}3.
\end{equation}
Thus $L_i$ is the number of triples meeting a fixed $i$-set.
We shall use the following shifted estimates.
\begin{claim}\label{clm:m3-shifted-estimates}
        For every $i\in\{1,2\}$,
        \begin{align}
                A_3-L_i-h_3(N_i,p_i-1)    & >B_{\le2}, \label{eq:m3-shifted-h3}     \\
                A_3-L_i-\binom{3p_i-1}{3} & >B_{\le2}. \label{eq:m3-shifted-clique}
        \end{align}
\end{claim}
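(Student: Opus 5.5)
This is an explicit inequality between polynomials in $s$ and $c$ (with $\ell=s-c$ and $n=3s+c$), valid for $(\alpha+\varepsilon)s^{2/3}<c<s-t(s)$, so $0\le c\le 0.11s$. The plan is to expand everything in $s$ and $c$ and compare leading orders.

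\emph{Terms on the left.} By \eqref{eq:m3-pi-Ni-Li} we have $3p_i-1=3\ell+3i-13$, so
\[
A_3-\binom{3p_i-1}{3}=\sum_{a=0}^{11-3i}\binom{3\ell-2-a}{2}\ge(12-3i)\binom{3\ell-13}{2}\approx\frac{(12-3i)\cdot 9\ell^2}{2}.
\]
Meanwhile $L_i\le i\binom{n}{2}\approx i n^2/2$ and $B_{\le2}\approx s n\approx 3s^2$.

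\emph{Checking \eqref{eq:m3-shifted-clique}.} Since $c\le 0.11s$, we have $\ell\ge0.89s$ and $n\le 3.11s$. For $i=2$ the left side is about $27\ell^2\ge 21.3s^2$, against $L_2+B_{\le2}\lesssim 9.7s^2+3.2s^2$. For $i=1$ the left side is about $40\ell^2$, against roughly $4.8s^2+3.2s^2$. The quadratic margins are therefore linear in $s^2$ with positive constants, and the $O(s)$ error terms are absorbed for large $s$.

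\emph{Checking \eqref{eq:m3-shifted-h3}.} Put $u=p_i-1=\ell+i-5$ and $N=N_i=n-i$. Then
\[
h_3(N,u)=\binom N3-\binom{N-u}3+1-\binom{N-u-3}2,
\]
which is of order $s^3$, and $N-u=3s+c-\ell+5-2i=2s+2c+5-2i\approx r_0$. Therefore
\[
L_i+h_3(N_i,p_i-1)=\binom n3-\binom{n-i}3+\binom{n-i}3-\binom{2s+2c+5-2i}{3}+1-\binom{2s+2c+2-2i}{2},
\]
and this telescopes to
\[
\binom n3-\binom{r_0+4-2i}{3}+1-\binom{r_0+1-2i}{2}.
\]
Compare this with $\Lambda_3=\binom a2+\binom n3-\binom{r_0}3$. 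For $i=2$ the difference from $\Lambda_3$ is $\binom{r_0}{3}-\binom{r_0}{3}+1-\binom{r_0-3}2-\binom a2$. For $i=1$ we gain an additional $\binom{r_0+2}3-\binom{r_0}3$ in our favour, but the $O(s^2)$ terms still have to be checked.

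So the claim reduces to
\[
A_3-\Lambda_3>B_{\le2}-\binom{a}{2}-\binom{r_0+1-2i}{2}+1-\Bigl(\binom{r_0+4-2i}3-\binom{r_0}3\Bigr).
\]
The right-hand side is $O(s^2)$ with a negative leading coefficient: $B_{\le2}\approx 2.5s^2$ to $3s^2$, while $\binom{r_0-3}{2}\approx 2s^2$ and $\binom a2\approx0.4s^2$. The plan is to compute its $s^2$-coefficient exactly as a function of $c/s$. If it is negative throughout $[0,0.11]$, then \eqref{eq:A3-beats-Lambda3} finishes the argument, even though that inequality only gives a positive quantity of possibly small size near $\ell=t(s)$.

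\textbf{Main obstacle.} The main risk is exactly this step. Near $\ell\approx t(s)$ the gap $A_3-\Lambda_3$ is small, so the proof depends on the $O(s^2)$ bookkeeping having the correct sign.

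Using $a\approx\ell$ and $r_0\approx 2s+2c$, we get
\[
\binom{a}2+\binom{r_0-3}2\approx\frac{\ell^2+4(s+c)^2}{2}.
\]
With $x=c/s$, compare this with $B_{\le2}\approx s(5s+2c)/2$, i.e. with $(5+2x)s^2/2$. The inequality needed is
\[
(1-x)^2+4(1+x)^2>5+2x,
\]
which is $5x^2+4x>0$. This holds for $x>0$, but the margin is only about $4cs$. That is linear in $c$, so with $c\gg s^{2/3}$ it dominates the $O(s)$ errors.

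Hence the plan is to perform this exact expansion carefully, keeping the $cs$ and $s$ terms, and to verify that the residual is at least about $4cs-O(s)>0$. The $i=1$ case is handled the same way, with the extra $\binom{r_0+2}{3}-\binom{r_0}3$ term giving slack.
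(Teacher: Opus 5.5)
Your proposal is correct and follows essentially the paper's route. For \eqref{eq:m3-shifted-h3}, you telescope $L_i+h_3(N_i,p_i-1)$ against $\Lambda_3$, show that $\Lambda_3-L_i-h_3(N_i,p_i-1)-B_{\le2}>0$, and then invoke $A_3>\Lambda_3$; the paper's exact doubled numerators $8s^2+20cs+13c^2+\cdots$ and $4cs+5c^2-10s-5c+8$ match your leading-order computation. For \eqref{eq:m3-shifted-clique}, you replace the paper's exact quadratics $f_i(\ell/s)$, evaluated at $\alpha_*$, by cruder termwise bounds using $\ell\ge0.89s$ and $n\le3.11s$, which is legitimate because the $s^2$-margins there are large.
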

The proof of \cref{clm:m3-shifted-estimates} is given in the appendix for readability.

The proof is organized according to the structure of the $3$-layer $H=\F_3$.
If $H$ has a small vertex cover, then many triples are
missing from a large set.
If $\nu(H)\le \ell-1<\tau(H)$, we use \cref{thm:GLM}.
If $H$ already contains at least $\ell$ disjoint triples, then the remaining vertices must miss many
sets of size at least $4$.

\subsection{Reduction to the key inequality}

Let $\F\subseteq2^{[n]}$ satisfy $\nu(\F)<s$.
Since the family $\mathcal P'(3,s,\ell;L')$ contains all sets of size at least $4$, the only possible surplus over $\mathcal P'(3,s,\ell;L')$ comes from layers $1,2,3$, and this surplus must be paid for by missing sets above the $3$-layer.

By \cref{lem:empty-reduction}, we may and do assume that $\emptyset\notin\F$ from now on.
Since $n=3s+c$ and $c<0.11s$, we have $2n-s=5s+2c<5.22s$ for all sufficiently large $s$.
Hence
\begin{equation}\label{eq:Ble2-less-3s2}
        B_{\le2}<3s^2.
\end{equation}
Since $\emptyset\notin\F$, \eqref{eq:layer-balance} gives
\[
        |\F|-|\mathcal P'(3,s,\ell)|
        =
        |\F_1|+|\F_2|+|H|-A_3-|\Y_{\ge4}|.
\]
Thus it is enough to prove
\begin{equation}\label{eq:key-high-terminal}
        |\F_1|+|\F_2|+|H|\le A_3+|\Y_{\ge4}|.
\end{equation}

We first bound the number of $1$-sets and $2$-sets in $\F$.
The family $\F_1$ has no $s$ pairwise disjoint members, so $|\F_1|\le s-1$.
Also $\nu(\F_2)<s$.
By \cref{lem:EG}, and since $n=3s+c$ with $c>0$,
\[
        |\F_2|\le \max\left\{\binom{2s-1}{2},\binom n2-\binom{n-s+1}{2}\right\}
        =\binom n2-\binom{n-s+1}{2}=\frac{(s-1)(2n-s)}{2}.
\]
Thus, by the definition of $B_{\le2}$,
\begin{equation}\label{eq:F12-bound-terminal}
        |\F_1|+|\F_2|\le B_{\le2}.
\end{equation}

\subsection{Case 1: $\tau(H)\le a$}

In this case, let $A$ be a vertex cover of $H$ with $|A|=a$.
Let $R_0=[n]\setminus A$ and $Z_3=\binom{[n]}3\setminus H$.
Since $A$ covers $H$, every triple contained in $R_0$ is missing from $H$, so $\binom{R_0}{3}\subseteq Z_3$.

Define
\[
        B_1(Z_3)=\left\{x\in[n]: |Z_3(\overline{\{x\}})|\ge\binom{r_0+2}{3}\right\} \text{ and }B_2(Z_3)=\left\{E\in\binom{[n]}2: |Z_3(\overline E)|\ge\binom{r_0}{3}\right\}.
\]
For a singleton $E=\{x\}\in\F_1$, call $E$ \emph{bad} if $x\in B_1(Z_3)$.
For $E\in\F_2$, call $E$ bad if $E\in B_2(Z_3)$.
All other sets in $\F_1\cup\F_2$ are called \emph{good}.

Let $Z_3^+=Z_3\setminus\binom{R_0}{3}$ and $\xi=|Z_3^+|=|Z_3|-\binom{r_0}{3}$.
Every pair contained in $A$ belongs to $B_2(Z_3)$, because $\binom{R_0}{3}\subseteq Z_3$; these pairs contribute exactly $\binom a2$.
We claim that
\begin{equation}\label{eq:terminal-core-bound}
        |B_1(Z_3)|+|B_2(Z_3)|\le\binom a2+|Z_3|-\binom{r_0}{3}.
\end{equation}
It remains to bound the bad singletons and the bad pairs not wholly contained in $A$ by $\xi$.
Split them into the four classes
\[
        \begin{aligned}
                \mathcal U_1 & =B_2(Z_3)\cap\{E:|E|=2,
                |E\cap A|=1\},\text{ }
                \mathcal U_2=B_2(Z_3)\cap\binom{R_0}{2}, \\
                \mathcal U_3 & =B_1(Z_3)\cap A
                \text{ and }
                \mathcal U_4=B_1(Z_3)\cap R_0.
        \end{aligned}
\]
Let
\[
        D_1=\binom{r_0-1}{2},\text{ }
        D_2=(r_0-2)^2,\text{ }
        D_3=r_0^2\text{ and }
        D_4=\frac{3r_0^2-3r_0+2}{2}.
\]
Now we show that if $\mathcal U_i$ is non-empty for some $i$, then $\xi\ge D_i$.
In fact, if $E\in\mathcal U_1$, then $|E\cap A|=|E\cap R_0|=1$.
Exactly $\binom{r_0-1}{3}$ triples from the core $\binom{R_0}{3}$ are disjoint from $E$, while the definition of $B_2(Z_3)$ requires at least $\binom{r_0}{3}$ missing triples disjoint from $E$.
Hence,
\[
        \xi
        \ge |Z_3^+(\overline E)|
        \ge \binom{r_0}{3}-\binom{r_0-1}{3}
        =D_1.
\]
If $E\in\mathcal U_2$, then $E\in\binom{R_0}{2}$.
The family $\binom{R_0}{3}$ contributes only the $\binom{r_0-2}{3}$ triples contained in $R_0\setminus E$, but the definition of $B_2(Z_3)$ again requires $\binom{r_0}{3}$ missing triples outside $E$.
Therefore,
\[
        \xi
        \ge |Z_3^+(\overline E)|
        \ge \binom{r_0}{3}-\binom{r_0-2}{3}
        =D_2.
\]
If $x\in\mathcal U_3$, then $x\in A$.
All triples in $\binom{R_0}{3}$ are disjoint from $x$, $\binom{R_0}{3}$ already accounts for $\binom{r_0}{3}$ missing triples.
Since $x\in B_1(Z_3)$, the threshold is $\binom{r_0+2}{3}$, and hence
\[
        \xi
        \ge |Z_3^+(\overline{\{x\}})|
        \ge \binom{r_0+2}{3}-\binom{r_0}{3}
        =D_3.
\]
Finally, if $x\in\mathcal U_4$, then $x\in R_0$.
The family $\binom{R_0}{3}$ contributes $\binom{r_0-1}{3}$  triples disjoint from $x$, whereas the threshold for $B_1(Z_3)$ is still $\binom{r_0+2}{3}$.
Thus
\[
        \xi
        \ge |Z_3^+(\overline{\{x\}})|
        \ge \binom{r_0+2}{3}-\binom{r_0-1}{3}
        =D_4.
\]
Consequently, every member of $\mathcal U_j$ forces $\xi\ge D_j$.
Now it remains to prove \eqref{eq:terminal-core-bound}.

In fact, for $j=1,2,3,4$, the number of possible elements in $\mathcal U_1\cup\cdots\cup \mathcal U_j$ is at most $N^{\prime}_j$, where
\[
        N^{\prime}_1=ar_0,\text{ }
        N^{\prime}_2=ar_0+\binom{r_0}{2},\text{ }
        N^{\prime}_3=ar_0+\binom{r_0}{2}+a\text{ and }
        N^{\prime}_4=ar_0+\binom{r_0}{2}+a+r_0.
\]
Since $r_0-2a=4c+3$, we have $r_0\ge2a+7$ for all sufficiently large $s$.
These thresholds are increasing, since
\[
        D_2-D_1=\frac{(r_0-3)(r_0-2)}2,\text{ }
        D_3-D_2=4(r_0-1)\text{ and }
        D_4-D_3=\frac{(r_0-2)(r_0-1)}2.
\]
For fixed $a$, each difference $D_j-N^{\prime}_j$ increases with $r_0$.
At $r_0=2a+7$, the four differences are
\[
        4a+15,\text{ } 4,\text{ }7a+28\text{ and }2a^2+16a+36.
\]
Hence $N^{\prime}_j\le D_j-1$ for every $j$.
If $\xi<D_1$, all four classes are empty.
If $D_j\le\xi<D_{j+1}$ for some $j<4$, only the first $j$ classes can be non-empty, so their union has size at most $N^{\prime}_j\le D_j-1\le\xi$.
If $\xi\ge D_4$, the whole union has size at most $N^{\prime}_4\le D_4-1\le\xi$.
This proves \eqref{eq:terminal-core-bound}.

\medskip
\noindent {\bf Subcase 1a: there are no good sets.}
\medskip

In this subcase, we have $\F_1\subseteq B_1(Z_3)$ and $\F_2\subseteq B_2(Z_3)$.
By \eqref{eq:terminal-core-bound}, $|\F_1|+|\F_2|\le\binom a2+|Z_3|-\binom{r_0}{3}$.
Since $|H|=\binom n3-|Z_3|$, it follows that
\[
        |\F_1|+|\F_2|+|H|\le\binom a2+\binom n3-\binom{r_0}{3}=\Lambda_3<A_3,
\]
where the strict inequality follows from \eqref{eq:A3-beats-Lambda3}.
Thus \eqref{eq:key-high-terminal} holds strictly.

\medskip
\noindent {\bf Subcase 1b: there exists a good set.}
\medskip

In this subcase, let $E\in\F_i$ be a good set with $i\in\{1,2\}$.
Let $p=p_i$ and $N=N_i$.
If $i=1$, then $E=\{x\}$ with $x\notin B_1(Z_3)$; if $i=2$, then $E\notin B_2(Z_3)$.
In both cases, $|Z_3(\overline E)|<\binom{r_0+4-2i}{3}$.
The identity $r_0+4-2i=(n-i)-p+1=N-p+1$ gives
\begin{equation}\label{eq:good-lower-HbarE}
        |H(\overline E)|>\binom N3-\binom{N-p+1}{3}.
\end{equation}

If $H(\overline E)$ contains $p$ pairwise disjoint triples, choose such triples $Q_1,\ldots,Q_p$.
Let $W=[n]\setminus(E\cup Q_1\cup\cdots\cup Q_p)$ and $u=s-(p+1)=c+3-i$.
Then $|W|=4u$.
If $\F_4\cap\binom W4$ contained $u$ pairwise disjoint $4$-sets, these sets together with $E,Q_1,\ldots,Q_p$ would form an $s$-matching in $\F$.
Hence $\nu(\F_4\cap\binom W4)<u$, and \cref{lem:blocker} gives
\[
        |\Y_4|\ge \left|\binom W4\setminus\F_4\right|
        \ge \frac1u\binom{4u}{4}.
\]
Since $u\ge c+1>(\alpha+\varepsilon)s^{2/3}$ and $(\alpha+\varepsilon)^3>4/3$,
\[
        \frac1u\binom{4u}{4}
        =\frac{(4u-1)(4u-2)(4u-3)}6
        =\frac{32}{3}u^3+O(u^2)
        \ge \frac{16}{3}u^3
        \ge \frac{16}{3}(\alpha+\varepsilon)^3s^2
        >4s^2
\]
for all sufficiently large $s$.
By \eqref{eq:Ble2-less-3s2}, $|\Y_4|>B_{\le2}$.
Also $|H|\le\binom n3-\binom{r_0}{3}<A_3$, because $A$ covers $H$ and $A_3>\Lambda_3$.
Using \eqref{eq:F12-bound-terminal}, we obtain $|\F_1|+|\F_2|+|H|<A_3+|\Y_4|\le A_3+|\Y_{\ge4}|$.

It remains to handle the subcase in which $H(\overline E)$ contains no $p$ pairwise disjoint triples.
Note that $N-(3p-1)=4(s-\ell)+13-4i>0$, so $N\ge3p$ and in particular $N\ge3p-1$.
Since $H(\overline E)$ has no $p$ pairwise disjoint $3$-sets, \cref{thm:EMC3} gives
\[
        |H(\overline E)|\le
        \max\left\{\binom N3-\binom{N-p+1}{3},\binom{3p-1}{3}\right\}=\binom{3p-1}{3},
\]
where the equality follows from \eqref{eq:good-lower-HbarE}.
By the definition of $L_i$, at most $L_i$ triples of $H$ meet $E$, hence $|H|\le L_i+\binom{3p_i-1}{3}$.
By \eqref{eq:F12-bound-terminal} and \eqref{eq:m3-shifted-clique}, we have $|\F_1|+|\F_2|+|H|<A_3$.
This completes Case 1.

\subsection{Case 2: $\nu(H)\le a<\tau(H)$}

In this case, we have $n-(3a+2)=4s-\ell-(3\ell-1)=4(s-\ell)+1>0$.
Since $A_3=\binom{3\ell-1}{3}=\binom{3a+2}{3}$, \cref{thm:GLM} gives
\[
        |H|\le\max\left\{h_3(n,a),\binom{3a+2}{3}\right\}=\max\{h_3(n,a),A_3\}.
\]
Now we consider two subcases.

\medskip
\noindent {\bf Subcase 2a:} $|H|\le h_3(n,a)$.
\medskip

By \eqref{eq:m3-shifted-h3}, since $L_2+h_3(N_2,p_2-1)=h_3(n,a)$ by \eqref{eq:m3-pi-Ni-Li} and $a=\ell-1$, we have $A_3-h_3(n,a)>B_{\le2}$.
Together with \eqref{eq:F12-bound-terminal}, we obtain
\[
        |\F_1|+|\F_2|+|H|<B_{\le2}+h_3(n,a)<A_3.
\]

Hence \eqref{eq:key-high-terminal} holds strictly.

\medskip
\noindent {\bf Subcase 2b:} $h_3(n,a)<|H|\le A_3$.
\medskip

In this subcase, if $\F_1\cup\F_2=\emptyset$, then $|\F_1|+|\F_2|+|H|=|H|\le A_3$, so \eqref{eq:key-high-terminal} holds.
Suppose therefore that $E\in\F_i$ for some $i\in\{1,2\}$, and write $p=p_i$ and $N=N_i$.

If $H(\overline E)$ contains $p$ pairwise disjoint triples, choose such triples $Q_1,\ldots,Q_p$.
Let
\[
        W=[n]\setminus\left(E\cup Q_1\cup\cdots\cup Q_p\right)
        \text{ and }
        u=s-p-1=c+3-i.
\]
Then $|W|=4u$.
If $\F_4\cap\binom W4$ contained $u$ pairwise disjoint $4$-sets, then these sets together with $E,Q_1,\ldots,Q_p$ would form an $s$-matching in $\F$, a contradiction.
Hence $\nu(\F_4\cap\binom W4)<u$.
Applying \cref{lem:blocker} gives
\[
        |\Y_4|\ge\left|\binom W4\setminus\F_4\right|
        \ge \frac1u\binom{4u}{4}.
\]
Since $u\ge c+1>(\alpha+\varepsilon)s^{2/3}$ and $(\alpha+\varepsilon)^3>4/3$, we have, for all sufficiently large $s$,
\[
        \frac1u\binom{4u}{4}
        =\frac{(4u-1)(4u-2)(4u-3)}6
        =\frac{32}{3}u^3+O(u^2)
        \ge \frac{16}{3}u^3
        \ge \frac{16}{3}(\alpha+\varepsilon)^3s^2
        >4s^2>B_{\le2},
\]
where the last inequality uses \eqref{eq:Ble2-less-3s2}.
Since $|H|\le A_3$, \eqref{eq:F12-bound-terminal} gives $|\F_1|+|\F_2|+|H|<A_3+|\Y_4|\le A_3+|\Y_{\ge4}|$.

It remains to consider the case $\nu(H(\overline E))\le p-1$.
We claim that $\tau(H(\overline E))>p-1$.
Otherwise, a vertex cover of $H(\overline E)$ of size at most $p-1$, together with the vertices of $E$, would cover every triple in $H$.
Its size would be at most $(p-1)+i=\ell+2i-5\le\ell-1=a$, contradicting $\tau(H)>a$.

Thus $\nu(H(\overline E))\le p-1<\tau(H(\overline E))$.
To apply \cref{thm:GLM} with $N=N_i$ and $u=p_i-1$, we need to verify that $N_i\ge 3p_i-1$, which follows from
\[
        N_i-(3(p_i-1)+2)=N_i-(3p_i-1)=4(s-\ell)+13-4i>0.
\]
Thus \cref{thm:GLM} gives $|H(\overline E)|\le\max\{h_3(N,p-1),\binom{3p-1}{3}\}$.
Hence
\[
        |\F_1|+|\F_2|+|H|
        \le
        B_{\le2}+L_i+\max\left\{h_3(N_i,p_i-1),\binom{3p_i-1}{3}\right\}
        <A_3,
\]
where the first inequality uses \eqref{eq:F12-bound-terminal} and the fact that at most $L_i$ triples of $H$ meet $E$, and the second follows from \eqref{eq:m3-shifted-h3} and \eqref{eq:m3-shifted-clique}.
This completes Case 2.

\subsection{Case 3: $\nu(H)\ge\ell$}

Write $\nu(H)=\ell+d$ for some $d\ge0$.
Since $\nu(\F)<s$ and $s=\ell+c$, we have $0\le d\le c-1$.
For $0\le d\le c-1$, define
\begin{equation}\label{eq:Sd-exact-expansion}
        S_d\coloneqq\binom{3\ell+3d+2}{3}-\binom{3\ell-1}{3}
        =\frac{d+1}{2}
        \left(27\ell^2+27d\ell+9d^2-9\ell+2\right).
\end{equation}

Since $H$ has no $(\ell+d+1)$-matching and $n-(3(\ell+d+1)-1)=4c-3d-2\ge c+1>0$, \cref{thm:EMC3} gives
\begin{equation}\label{equ:H-maximum}
        |H|\le
        \max\left\{
        \binom n3-\binom{n-\ell-d}{3},
        \binom{3\ell+3d+2}{3}
        \right\}.
\end{equation}
Recall that $A_3=\binom{3\ell-1}{3}$.
We now show that the maximum is attained by the second term, and hence bound the possible surplus of $H$ over $A_3$.
\begin{claim}\label{clm:second-term}
        For every $0\le d\le c-1$, $\binom{3\ell+3d+2}{3}>\binom n3-\binom{n-\ell-d}{3}$.
        Consequently, $|H|\le \binom{3\ell+3d+2}{3}$ and $|H|-A_3\le S_d$.
\end{claim}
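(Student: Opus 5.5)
We need to show, for $0\le d\le c-1$, that $f(d):=\binom{3\ell+3d+2}{3}-\binom n3+\binom{n-\ell-d}{3}>0$, where $n=4s-\ell=3\ell+4c$ and $\ell>t(s)\approx0.89s$, so $c<0.11s$ and $\ell>8c$ for large $s$. Once this holds, \eqref{equ:H-maximum} gives $|H|\le\binom{3\ell+3d+2}{3}$, and subtracting $A_3$ and using \eqref{eq:Sd-exact-expansion} gives $|H|-A_3\le S_d$.

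Set $x=\ell+d+1$, so that $3x-1=3\ell+3d+2$. Then the claim reads $\binom{3x-1}{3}>\binom n3-\binom{n-x+1}{3}$. This is exactly the comparison between the clique and the star constructions in the Erd\H{o}s matching conjecture for $k=3$, with matching bound $x$ on $n$ vertices.

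The plan is to view $g(x)=\binom{3x-1}{3}-\binom n3+\binom{n-x+1}{3}$ as a cubic in $x$ for fixed $n$ and show $g>0$ on the range $\ell+1\le x\le \ell+c$. Write $n=3x+y$ with $y=n-3x=4c-3d-3$, so $1\le y\le 4c-3$ and $y\le 4c<0.45x$.

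Expand the star term:
\[
\binom n3-\binom{n-x+1}3=\sum_{i=0}^{x-2}\binom{n-1-i}{2}.
\]
This is at most $(x-1)\binom{n-1}{2}-\binom{x-1}{2}(n-2)+O(x^3)$. Its leading order is $\frac{(x-1)}{2}\bigl((3x+y)^2-(x-1)(3x+y)+\frac{(x-1)^2}{3}\bigr)$, which is approximately $\frac{x}{2}\bigl(9x^2+6xy+y^2-3x^2-xy+\frac{x^2}{3}\bigr)=\frac{x}{2}\bigl(\tfrac{19}{3}x^2+5xy+y^2\bigr)$. The clique term is approximately $\frac{27x^3}{6}=\frac{x}{2}\cdot 9x^2$.

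So the leading-order difference is $\frac{x}{2}\bigl(\tfrac{8}{3}x^2-5xy-y^2\bigr)$. This is positive when $y/x<\theta$, where $\theta$ is the positive root of $\theta^2+5\theta-\frac83=0$, namely $\theta\approx0.48$. Since $y/x\le 4c/\ell<0.5\cdot 0.11/0.89\cdot 4$, we get $y/x\lesssim 0.494$, which is uncomfortably close to the root. The quadratic must therefore be handled carefully rather than by loose constants.

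To do this, we will use the exact hypothesis $\ell>t(s)$. The point $d=c-1$ corresponds to $x=s$. There $n=3x+c$, which is $\ell$-independent in shape. Recall that $t(s)$ is defined by $|\mathcal P|=|\mathcal P'|$ and relates to the same clique-versus-star type quantity. I expect the worst case to be $d=0$, where $y/x$ is largest, namely $y=4c-3$ and $x=\ell+1$.

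At $d=0$ we need $\binom{3\ell+2}{3}>\binom n3-\binom{n-\ell}{3}$. Using \eqref{eq:A3-beats-Lambda3} and the exact expansion there, we try to deduce this from $A_3>\Lambda_3$. Note that $\Lambda_3=\binom{\ell-1}2+\binom n3-\binom{n-\ell+1}3$ and $\binom n3-\binom{n-\ell}3=\Lambda_3-\binom{\ell-1}2+\binom{n-\ell}{2}$. Also $\binom{3\ell+2}{3}-A_3=S_0\approx\frac{27}{2}\ell^2$, which exceeds $\binom{n-\ell}{2}\approx\frac{(2\ell+4c)^2}{2}\le 2.5\ell^2$. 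This settles $d=0$ using $A_3>\Lambda_3$.

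For general $d$, we will show that $g$ increases in $x$ along the range, i.e. $g(x+1)-g(x)>0$. This difference equals $\binom{3x+2}{3}-\binom{3x-1}{3}-\binom{n-x+1}{2}$, which is approximately $\frac{27}{2}x^2-\frac{(2x+y)^2}{2}>0$ since $y<x$. Together with the $d=0$ case, this finishes the claim.

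The main obstacle is the base case. The delicate step is the exact bookkeeping showing that it follows from \eqref{eq:A3-beats-Lambda3}, since the asymptotic margin there is thin. If that bookkeeping fails, we fall back on the direct expansion $\Lambda_3-A_3=\frac{\ell-1}{3}\bigl(24s^2-6s-6-(18s-17)\ell-10\ell^2\bigr)$ to get explicit slack.
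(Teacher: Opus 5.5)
Your proposal is correct and follows the paper's route. The paper shows $\Delta_0>0$ by expanding it in terms of $Q_s(\ell)$, which is positive because $\ell>t(s)$, and then shows $\Delta_{d+1}-\Delta_d>0$. You get the base case instead by reducing to the already established $A_3>\Lambda_3$ (the same condition) together with $S_0>\binom{n-\ell}{2}-\binom{\ell-1}{2}$, and you get monotonicity from the crude bound $n-x<3x$ instead of the paper's exact expansion of $C_s(\ell)$.

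Two harmless slips. The increment is actually $\binom{3x+2}{3}-\binom{3x-1}{3}-\binom{n-x}{2}$, not $\binom{n-x+1}{2}$; your version subtracts more, so it still suffices. Also, $4c<\ell/2$ only gives $(2\ell+4c)^2/2<3.125\ell^2$, not $2.5\ell^2$, but this still leaves ample room against $S_0=(27\ell^2-9\ell+2)/2$.
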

The proof of \cref{clm:second-term} is a polynomial verification and is given in the appendix for readability.

In the rest of the proof, our goal is to show that
\begin{equation}\label{eq:m3-case3-direct-target}
        |\Y_{\ge4}|
        \ge
        |H|-A_3+|\F_1|+|\F_2|+1.
\end{equation}
Indeed, \eqref{eq:m3-case3-direct-target} implies $|\F_1|+|\F_2|+|H| \le A_3+|\Y_{\ge4}|-1 <A_3+|\Y_{\ge4}|$, which is \eqref{eq:key-high-terminal} strictly.
In several subcases we prove the stronger estimate
\begin{equation}\label{eq:m3-case3-target}
        |\Y_{\ge4}|
        \ge
        S_d+|\F_1|+|\F_2|+1,
\end{equation}
which is sufficient by \cref{clm:second-term}.

\medskip

Let $T_0=(\alpha+\varepsilon)^3$ and $\beta_0=27/(2T_0)$.
Since $\alpha^3=4/3$, we have $T_0>4/3$, and hence $\beta_0<81/8<32/3$.
By continuity at $x=0$, choose constants $0<\rho_0<1/4$ and $\kappa_0>0$, depending only on $\varepsilon$, such that $\frac{(4-3x)^4}{24(1-x)}
        \ge \beta_0+3\kappa_0$ for all $0\le x\le\rho_0$.
The remaining parameters used to split the range $d>\rho_0 c$ will be chosen after Subcase~3a.

Now we split the proof based on the value of $d$.
The three ranges of $d$ use different sources of loss above the $3$-layer.
For small $d$, the residual vertex set is close to a multiple of $4$, so \cref{cor:m3-nonuniform-terminal-blocker} suffices.
For intermediate $d$, either many $4$-sets can first be extracted,
allowing \cref{lem:p-ordered-local-blocker}, or there are few $4$-sets and the missing $4$-sets already give the required loss by \cref{thm:frankl-shadow-matching}.
For large $d$, we apply \cref{lem:p-ordered-local-blocker} to find missing sets to pay for the surplus in $3$-layer.

\medskip
\noindent\textbf{Subcase 3a: $0\le d\le\rho_0 c$.}
\medskip

In this case, let $\mathcal N\subseteq H$ be a matching of size $\ell+d$, let $u=c-d$ and write $W=[n]\setminus V(\mathcal N)$.
Then $|W|=4u+d=4c-3d$.
Since $d\le \rho_0 c$ and $\rho_0<1/4$, we have $0\le d<u$.
Since $\mathcal N$ already contains $\ell+d$ members of $\F$ and $\nu(\mathcal F)<s$, the family $\F_{\ge4}\cap\binom W{\ge4}$ has no $u$-matching.
Applying \cref{cor:m3-nonuniform-terminal-blocker} with $h=d$ gives
\begin{equation}\label{eq:small-basic-high-loss}
        |\Y_{\ge4}|
        \ge
        \frac{d+1}{c-d}\binom{4c-3d}{4}.
\end{equation}
Let $x=d/c$.
Using $\binom z4\ge (z^4-6z^3)/24$ and the choice of $\rho_0$ and $\kappa_0$, we obtain, uniformly for $0\le x\le\rho_0$,
\[
        \frac{d+1}{c-d}\binom{4c-3d}{4}
        \ge
        (d+1)c^3
        \left(
        \frac{(4-3x)^4}{24(1-x)}
        -\frac{(4-3x)^3}{4(1-x)c}
        \right)
        \ge (d+1)c^3(\beta_0+2\kappa_0)
\]
for all sufficiently large $s$.
On the other hand, by \eqref{eq:Sd-exact-expansion}, $\ell\le s$ and $c\ge(\alpha+\varepsilon)s^{2/3}$,
\[
        \frac{S_d}{(d+1)c^3}
        \le
        \frac{27}{2}\frac{s^2}{c^3}
        +\frac{27}{2}\frac{ds}{c^3}
        +\frac{9}{2}\frac{d^2}{c^3}
        +\frac1{c^3}.
\]
The first term is at most $\beta_0$, and the remaining terms are $o(1)$ uniformly for $d\le\rho_0 c$.
Since $s_0$ is sufficiently large, we have
\[
        S_d\le (d+1)c^3(\beta_0+\kappa_0).
\]
Combining the last two estimates gives
\begin{equation}\label{eq:small-basic-gap}
        \frac{d+1}{c-d}\binom{4c-3d}{4}-S_d
        \ge
        \kappa_0(d+1)c^3
\end{equation}
for every $0\le d\le\rho_0 c$.

Choose an integer $A_0=A_0(\varepsilon)$ such that $\kappa_0(A_0+1)(\alpha+\varepsilon)^3>4$.
By \eqref{eq:small-basic-high-loss} and \eqref{eq:small-basic-gap}, we have, throughout Subcase~3a,
\[
        |\Y_{\ge4}|-S_d \ge \kappa_0(d+1)c^3.
\]
This already implies \eqref{eq:m3-case3-target} unless $d<A_0$ and $\F_1\cup\F_2\ne\emptyset$.
Indeed, by \eqref{eq:Ble2-less-3s2}, $|\F_1|+|\F_2|<3s^2$.
If $d\ge A_0$, then
\[
        |\F_1|+|\F_2|+1<4s^2<\kappa_0(A_0+1)(\alpha+\varepsilon)^3s^2\le \kappa_0(d+1)c^3
\]
for all sufficiently large $s$.
In fact, if $0\le d<A_0$ and $\F_1\cup\F_2=\emptyset$, then $|\F_1|+|\F_2|=0$, while $\kappa_0(d+1)c^3\ge1$ for all sufficiently large $s$.
Thus \eqref{eq:m3-case3-target} holds in both cases.

It remains to consider $0\le d<A_0$ and $\F_1\cup\F_2\ne\emptyset$.
Choose $E\in\F_j$ with $j\in\{1,2\}$.
First suppose that $H(\overline E)$ contains an $(\ell+d)$-matching $\mathcal N_E$.
Let $W_E=[n]\setminus\bigl(E\cup V(\mathcal N_E)\bigr)$.
Then
\[
        |W_E|=4c-3d-j=4(c-d-1)+(d+4-j).
\]
If $\F_{\ge4}\cap\binom {W_E}{\ge4}$ contained a $(c-d-1)$-matching, then this matching, together with $E$ and $\mathcal N_E$, would give an $s$-matching in $\F$.
Thus $\nu\left(\F_{\ge4}\cap\binom {W_E}{\ge4}\right)<c-d-1$.
Since $d<A_0$, for all sufficiently large $s$, we have $0\le d+4-j<c-d-1$.
Applying \cref{cor:m3-nonuniform-terminal-blocker} with $u=c-d-1$ and $h=d+4-j$ gives
\[
        |\Y_{\ge4}|
        \ge
        \frac{d+5-j}{c-d-1}\binom{4c-3d-j}{4}.
\]
For each fixed pair $0\le d<A_0$ and $j\in\{1,2\}$, as $s\to\infty$,
\begin{equation}\label{eq:small-low-hasmatching-Y-asymp}
        \frac{d+5-j}{c-d-1}\binom{4c-3d-j}{4}
        =
        \left(\frac{32}{3}(d+5-j)+o(1)\right)c^3,
\end{equation}
and, by \eqref{eq:Sd-exact-expansion}, \eqref{eq:Ble2-less-3s2}, and $c\ge(\alpha+\varepsilon)s^{2/3}$,
\begin{equation}\label{eq:small-low-hasmatching-cost-asymp}
        S_d+|\F_1|+|\F_2|+1
        \le
        \left(
        \frac{\frac{27}{2}(d+1)+3}{T_0}
        +o(1)
        \right)c^3.
\end{equation}
We compare the leading $c^3$-coefficients in \eqref{eq:small-low-hasmatching-Y-asymp} and \eqref{eq:small-low-hasmatching-cost-asymp}.
For every $d\ge0$ and $j\in\{1,2\}$, their difference is positive.
Indeed, since $T_0>4/3$ and $j\le2$,
\[
        \frac{32}{3}(d+5-j)
        -\frac{\frac{27}{2}(d+1)+3}{T_0}>
        \frac{32}{3}(d+3)
        -\frac34\left(\frac{27}{2}(d+1)+3\right)=\frac{13}{24}d+\frac{157}{8}>0.
\]
Since only finitely many pairs $(d,j)$ occur, the positive coefficient gap is uniform in this range. Since $s_0$ is sufficiently large,
\[
        \frac{d+5-j}{c-d-1}\binom{4c-3d-j}{4}
        -
        \bigl(S_d+|\F_1|+|\F_2|+1\bigr)
        >0.
\]
Therefore \eqref{eq:m3-case3-target} follows.

We may therefore assume that $H(\overline E)$ has no $(\ell+d)$-matching.
In this situation, our goal is to prove \eqref{eq:m3-case3-direct-target}.
Since $d<A_0$ and $j\le2$, $n-j-(3\ell+3d-1)=4c-3d+1-j>0$ for all sufficiently large $s$.
Hence \cref{thm:EMC3} gives
\[
        |H(\overline E)|
        \le
        \max\left\{
        \binom{n-j}{3}-\binom{2s+2c-d-j+1}{3},
        \binom{3\ell+3d-1}{3}
        \right\}.
\]
The following estimate shows that the maximum is attained by the second term.
\begin{claim}\label{clm:small-shifted-second-term}
        For every $0\le d<A_0$ and every $j\in\{1,2\}$,
        \[
                \binom{3\ell+3d-1}{3}
                >
                \binom{n-j}{3}-\binom{2s+2c-d-j+1}{3}.
        \]
\end{claim}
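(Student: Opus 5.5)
The plan is to derive \cref{clm:small-shifted-second-term} from the already established inequality \eqref{eq:A3-beats-Lambda3}, $A_3>\Lambda_3$, by comparing both sides with the $d=0$, unshifted quantities. A direct asymptotic comparison is hopeless. To leading order the claim compares $27\ell^3$ with $(4s-\ell)^3-(4s-2\ell)^3$, and these two agree exactly at $\ell\approx 0.8916s$, which is $t(s)$. So the estimate is tight at the lower end of our range, and the information that $\ell>t(s)$ must enter through \eqref{eq:A3-beats-Lambda3}.

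\textbf{Step 1 (the right-hand side).} Write the right-hand side as a sum of consecutive binomials,
\[
\binom{n-j}{3}-\binom{2s+2c-d-j+1}{3}=\sum_{x=2s+2c-d-j+1}^{n-j-1}\binom{x}{2}.
\]
This is a sum of $\ell+d-1$ terms, because $n-(2s+2c+1)=s-c-1=\ell-1$. Similarly, $\binom n3-\binom{r_0}{3}=\sum_{x=r_0}^{n-1}\binom x2$ is a sum over a window of length $\ell-1$. The window for the right-hand side is obtained from this one in two moves. First shift it down by $j\ge1$; since $\binom x2$ is increasing, this does not increase the sum. Then extend it downward by $d$ further terms, each at most $\binom n2$. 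Hence
\[
\binom{n-j}{3}-\binom{2s+2c-d-j+1}{3}\le \binom n3-\binom{r_0}3+d\binom n2=\Lambda_3-\binom a2+d\binom n2 .
\]

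\textbf{Step 2 (the left-hand side).} Expand the left-hand side as
\[
\binom{3\ell+3d-1}{3}=A_3+\sum_{y=3\ell-1}^{3\ell+3d-2}\binom y2\ge A_3+3d\binom{3\ell-1}{2}.
\]

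\textbf{Step 3 (combining).} Using $A_3>\Lambda_3$ from \eqref{eq:A3-beats-Lambda3}, Steps 1 and 2 give
\[
\text{LHS}-\text{RHS}> \binom a2+d\left(3\binom{3\ell-1}{2}-\binom n2\right).
\]
For $d=0$ the right-hand side is $\binom a2\ge0$, and the inequality is already strict, so we are done. For $d\ge1$ it suffices that $3\binom{3\ell-1}{2}\ge\binom n2$. Here $\ell\ge 0.89s$ and $n=3s+c\le 3.11s$, so $3\binom{3\ell-1}{2}\approx 13.5\ell^2\ge 10.6s^2$, while $\binom n2\approx n^2/2\le 4.9s^2$. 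This holds with plenty of room for all sufficiently large $s$; in fact it holds for every $d$, not only $d<A_0$.

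I expect the only delicate point to be the monotonicity bookkeeping in Step 1. The shift by $j$ must move the window downward, which is what lets the $j$-dependence be discarded, and the extra $d$ terms must be bounded by $\binom n2$ rather than estimated asymptotically. Everything else is the exact identity $n-r_0=\ell-1$ together with crude size bounds.
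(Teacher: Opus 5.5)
Your proof is correct, and it takes a different route from the paper's. The paper proves the claim by direct expansion. It writes $6\Delta_{d,j}$ as $(\ell+d-1)$ times an explicit expression containing $2Q_s(\ell)$. It then checks by hand that the leftover lower-order terms ($24s-6\ell-12$, $d(49\ell+12s-31)+26d^2$, and the $j=2$ analogues) are positive. The fact $Q_s(\ell)>0$, i.e.\ $\ell>t(s)$, is used directly there.

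You never expand anything. You write both sides as sums of $\binom{x}{2}$ over windows and use monotonicity three times:
\begin{itemize}
\item shifting the right-hand window down by $j$ can only decrease the sum;
\item each of the $d$ extra terms on the right is at most $\binom n2$;
\item each of the $3d$ extra terms on the left is at least $\binom{3\ell-1}{2}$.
\end{itemize}
The condition $\ell>t(s)$ then enters only through the already established inequality $A_3>\Lambda_3$. Both proofs therefore rest on the same fact $Q_s(\ell)>0$, but yours recycles it rather than re-deriving a new polynomial identity.

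Your route has several advantages. Every step is checkable by hand, with no symbolic expansion to trust. It explains why the $j$-dependence is harmless. It exhibits an explicit slack of at least $\binom a2$. The remaining condition $3\binom{3\ell-1}{2}\ge\binom n2$ holds with lots of room given $\ell\ge 0.89s$ and $n\le 3.11s$. It also proves the inequality for every $0\le d\le c-1$, not just $d<A_0$.

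The paper's expansion gives an exact closed form for the difference, which shows precisely how the margin depends on $Q_s(\ell)$ and $d$. For the purposes of this claim, however, that extra precision is not needed.
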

The proof of \cref{clm:small-shifted-second-term} is a polynomial verification and is given in the appendix for readability.
By \cref{clm:small-shifted-second-term}, the maximum is attained by the second term.
Thus, $|H(\overline E)|\le \binom{3\ell+3d-1}{3}$.
By the definition of $L_j$, the number of $3$-sets meeting $E$ is at most $L_j$.
Therefore
\[
        |H|
        \le
        \binom{3\ell+3d-1}{3}+L_j.
\]
Since $c<0.11s$, for all sufficiently large $s$, $L_1=\binom{n-1}{2}<5s^2$ and $L_2=(n-2)^2<10s^2$.
Moreover, for $0\le d<A_0$, we have $\binom{3\ell+3d-1}{3}-\binom{3\ell-1}{3} \le \left(\frac{27}{2}d+o(1)\right)s^2$.
Thus this gives
\begin{equation}\label{eq:small-direct-H-surplus}
        |H|-A_3
        \le
        \left(\frac{27}{2}d+10+o(1)\right)s^2.
\end{equation}
On the other hand, \eqref{eq:small-basic-high-loss} gives, uniformly for
\(0\le d<A_0\), $|\Y_{\ge4}|
        \ge
        \left(\frac{32}{3}(d+1)+o(1)\right)c^3$.
By \eqref{eq:Ble2-less-3s2} and \eqref{eq:small-direct-H-surplus},
\[
        |H|-A_3+|\F_1|+|\F_2|+1
        \le
        \left(\frac{27}{2}d+13+o(1)\right)s^2
        \le
        \left(
        \frac{\frac{27}{2}d+13}{T_0}+o(1)
        \right)c^3,
\]
because \(c^3\ge T_0s^2\). Since \(T_0>4/3\),
\[
        \frac{32}{3}(d+1)
        -
        \frac{\frac{27}{2}d+13}{T_0}
        >
        \frac{32}{3}(d+1)
        -
        \frac34\left(\frac{27}{2}d+13\right)
        =
        \frac{13}{24}d+\frac{11}{12}>0.
\]
Since \(0\le d<A_0\) is a finite range, taking \(s_0\) sufficiently large gives $|\Y_{\ge4}|>
        |H|-A_3+|\F_1|+|\F_2|+1$, which is \eqref{eq:m3-case3-direct-target}.
This completes the proof of Subcase 3a.

\medskip

We now choose the parameters needed for the remaining range $d>\rho_0 c$.
Let $\gamma_1=\gamma_{\ref{lem:p-ordered-local-blocker}}(3,1)$, $\rho_1=\rho_{\ref{lem:p-ordered-local-blocker}}(3,1)$ and $C_1=C_{\ref{lem:p-ordered-local-blocker}}(3,1)$ be the constants from \cref{lem:p-ordered-local-blocker} with $(m,p)=(3,1)$.
For $\lambda>0$, let
\[
        g_\lambda(x)=\frac{(9-8\lambda)(4-3x)^3}{24}.
\]
Since
\[
        \beta_0
        <\frac{27}{2\alpha^3}
        =\frac{81}{8}=g_0(1/3),
\]
we may choose $0<\lambda<1/8$ sufficiently small so that, with $x_0=1/(3-2\lambda)$, one has $g_\lambda(x_0)>\beta_0$, and so that $\lambda\le \frac{3\rho_1}{1+3\rho_1}$.
Finally choose $\zeta$ sufficiently small so that
\[
        0<\zeta<\min\{\rho_1,1/2\}, \text{ }
        x_1\coloneqq\frac{1+3\zeta}{3(1+\zeta)}<x_0
        \text{ and }x_1>\rho_0 .
\]
This is possible because $x_0>1/3$, $x_1\to1/3$ as $\zeta\to0$, and $\rho_0<1/4$.
We split the remaining range $d>\rho_0 c$ into $\rho_0 c<d\le x_1c$ and $x_1c<d\le c-1$.
We shall also use the following crude bound. From \eqref{eq:Sd-exact-expansion}, $d\le c\le s$, and $d+1\le c$, we have
\begin{equation}\label{eq:Sd-crude-cs2}
        S_d\le 32cs^2
\end{equation}
for all sufficiently large $s$, uniformly for $0\le d\le c-1$.

\medskip
\noindent\textbf{Subcase 3b: $\rho_0 c< d\le x_1c$.}
\medskip

Let $\mathcal N=\{E_1,\ldots,E_{\ell+d}\}\subseteq H$ be a matching of size $\ell+d$, and write $W=[n]\setminus V(\mathcal N)$.
Then $|W|=4c-3d$.
Let $r_1=\lfloor\lambda d\rfloor$, $u=3(d-r_1)$ and $q=c-3d+2r_1$.
First note that $q$ is positive. Since $d\le x_1c<x_0c$ and $r_1=\lfloor\lambda d\rfloor$, we have
\[
        q=c-3d+2r_1
        \ge \bigl(1-(3-2\lambda)x_1\bigr)c-O(1).
\]
By the definition of $x_0$, the coefficient of $c$ is positive. Thus $q>0$, since $s$ is sufficiently large.

Let $\G_4=\F_4\cap\binom W4$.
Now we consider two branches depending on whether $\nu(\G_4)\ge q$.

\medskip
\noindent{\bf First branch: $\nu(\G_4)\ge q$.}
\medskip

Fix a $q$-matching $\mathcal Q\subseteq\G_4$ and let $X=W\setminus V(\mathcal Q)$.
Then
\[
        |X|=(4c-3d)-4(c-3d+2r_1)=9d-8r_1=3u+r_1.
\]
For every $I\in\binom{[\ell+d]}{d-r_1}$, write $P_I=\bigcup_{i\in I}E_i$.
Since $\mathcal N$ is a matching and $|I|=d-r_1$, the set $P_I$ has size $3(d-r_1)=u$.
The matching edges $E_i$ with $i\notin I$ form an $(\ell+r_1)$-matching.
For $x\in P_I$, define
\[
        \G_x^{(3)}=\{T\in\binom X3:x\cup T\in\F_4\} \text{ and }
        \G_x^{(4)}=\{T\in\binom X4:x\cup T\in\F_5\}.
\]
If a mixed ordered partition\footnote{See the statement of \cref{lem:p-ordered-local-blocker}.} of $P_I$ and $X$ existed, with exactly $r_1$ tails of size $4$, then it would give $u$ pairwise disjoint members of $\F_4\cup\F_5$, all contained in $P_I\cup X$.
These $u$ sets are disjoint from the $q$-matching $\mathcal Q$, because $X=W\setminus V(\mathcal Q)$, and they are also disjoint from the triples $E_i$ with $i\notin I$, because their only vertices in $V(\mathcal N)$ lie in $P_I\subseteq\bigcup_{i\in I}E_i$.
Thus they combine with the $(\ell+r_1)$-matching $\{E_i:i\notin I\}$ and the $q$-matching $\mathcal Q$ to give
\[
        (\ell+r_1)+q+u
        =\ell+r_1+(c-3d+2r_1)+3(d-r_1)=s
\]
pairwise disjoint members of $\F$, a contradiction.

Here $|P_I|=u$ and $|X|=3u+r_1$.
Moreover, $r_1\le\lambda d$ and $u=3(d-r_1)\ge3(1-\lambda)d$; by the choice of $\lambda$, this gives $r_1\le\rho_1u$.
Finally, since $d\ge\rho_0 c$ and $c\to\infty$, we have $u\ge C_1$ for all sufficiently large $s$.
Hence \cref{lem:p-ordered-local-blocker}, applied with $(m,p,b,h)=(3,1,3,r_1)$, gives
\[
        \sum_{x\in P_I}
        \left(
        \left|\binom X3\setminus\G_x^{(3)}\right|
        +
        \left|\binom X4\setminus\G_x^{(4)}\right|
        \right)
        \ge
        \gamma_1(r_1+1)u^3.
\]
We sum this estimate over all $I\in\binom{[\ell+d]}{d-r_1}$.
A counted missing set has the form $x\cup T$, where $x\in V(\mathcal N)$ and $T\subseteq X\subseteq W$.
Since $W\cap V(\mathcal N)=\emptyset$, such a set contains exactly one vertex of $V(\mathcal N)$, namely $x$.
If $x\in E_i$, then $I$ must contain $i$.
Therefore a fixed missing set is counted for at most $\binom{\ell+d-1}{d-r_1-1}$ choices of $I$.
Thus, using $c<0.11s$, $d\le c$, $d\ge\rho_0 c$, and $r_1=\lfloor\lambda d\rfloor$, we get
\begin{equation}\label{eq:middle-ob-Y-lower}
        \begin{aligned}
                |\Y_{\ge4}| & \ge
                \frac{\binom{\ell+d}{d-r_1}}{\binom{\ell+d-1}{d-r_1-1}}
                \gamma_1(r_1+1)u^3
                =
                \frac{\ell+d}{d-r_1}\gamma_1(r_1+1)u^3 \\
                            & \ge
                \frac{s}{2c}\gamma_1(\lambda\rho_0 c)
                \bigl(3(1-\lambda)\rho_0 c\bigr)^3
                =
                \frac12\gamma_1\lambda\rho_0
                \bigl(3(1-\lambda)\rho_0\bigr)^3sc^3.
        \end{aligned}
\end{equation}
Together with \eqref{eq:Sd-crude-cs2} and \eqref{eq:Ble2-less-3s2}, we have
\[
        S_d+|\F_1|+|\F_2|+1\le 32cs^2+3s^2+1=o(sc^3),
\]
since $c\ge(\alpha+\varepsilon)s^{2/3}$.
Therefore, \eqref{eq:middle-ob-Y-lower} implies \eqref{eq:m3-case3-target} for all sufficiently large $s$.

\medskip
\noindent{\bf Second branch: $\nu(\G_4)<q$.}
\medskip

In this branch, let $x=d/c$ and denote $w=|W|=(4-3x)c$.
Then $x\in[\rho_0,x_1]$.
Since $r_1=\lambda d+O(1)$, we have
\[
        q-1=\bigl(1-(3-2\lambda)x\bigr)c+O(1),
\]
uniformly for $\rho_0\le x\le x_1$.
By \cref{thm:frankl-shadow-matching}, we obtain $|\G_4|\le(q-1)\binom w3$.
Consequently,
\begin{align}
        |\Y_4\cap\binom W4|
         & \ge
        \binom w4-(q-1)\binom w3                                                     \notag   \\
         & =
        \frac{(4-3x)^4}{24}c^4
        -\frac{(1-(3-2\lambda)x)(4-3x)^3}{6}c^4
        +O(c^3)                                                                        \notag \\
         & =
        xg_\lambda(x)c^4+O(c^3),
        \label{eq:middle-shadow-loss}
\end{align}
where the $O(c^3)$ term is uniform for $\rho_0\le x\le x_1$.
Set
\[
        \eta_1\coloneqq\frac12\min_{\rho_0\le y\le x_1}
        y\bigl(g_\lambda(y)-\beta_0\bigr)>0.
\]
This is positive since $g_\lambda$ is decreasing, $x_1<x_0$, and $g_\lambda(x_0)>\beta_0$.
By \eqref{eq:middle-shadow-loss}, since $s_0$ is sufficiently large,
\[
        |\Y_4\cap\binom W4|
        \ge
        \left(\beta_0 x+\eta_1\right)c^4
        =
        \left(\beta_0\frac dc+\eta_1\right)c^4.
\]
On the other hand, using \eqref{eq:Sd-exact-expansion} and $0\le d\le c$, we have $S_d=\frac{27}{2}d\ell^2+R_d$ with $|R_d|\le C(s^2+c^2s+c^3)$, where $C$ is an absolute constant.
Since $c\ge(\alpha+\varepsilon)s^{2/3}$, $s^2+c^2s+c^3=o(c^4)$.
Moreover,
\[
        \frac{27}{2}d\ell^2
        \le
        \frac{27}{2}ds^2
        \le
        \frac{27}{2(\alpha+\varepsilon)^3}\frac dc\,c^4
        =\beta_0\frac dc\,c^4.
\]
Hence, $S_d\le(\beta_0d/c+o(1))c^4$.
Also $|\F_1|+|\F_2|+1\le B_{\le2}+1=O(s^2)=o(c^4)$.
Combining the last two estimates, we obtain
\[
        |\Y_{\ge4}|-\bigl(S_d+|\F_1|+|\F_2|+1\bigr)
        \ge
        \eta_1c^4-o(c^4)>0
\]
for all sufficiently large $s$.
Thus \eqref{eq:m3-case3-target} holds in the remaining part of Subcase 3b.
This completes the proof of Subcase 3b.

\medskip
\noindent\textbf{Subcase 3c: $x_1c< d\le c-1$.}
\medskip

Let $\mathcal N=\{E_1,\ldots,E_{\ell+d}\}\subseteq H$ be a matching of size $\ell+d$, and write $W=[n]\setminus V(\mathcal N)$, so $|W|=4c-3d$. Let
\[
        \theta=\frac{3d-(1-\zeta)c}{3+\zeta},\text{ }
        r_2=\lfloor\theta\rfloor,\text{ }
        u=c-r_2\text{ and }
        h=c-3d+3r_2 .
\]
The point of this choice is that $W$ has size $3u+h$, with $h$ a small positive multiple of $u$, while $d-r_2$ selected triples of $\mathcal N$ contain enough vertices to supply the $u$ labels so that \cref{lem:p-ordered-local-blocker} applies.

We first check that the conditions in \cref{lem:p-ordered-local-blocker} are satisfied.

\begin{claim}\label{clm:large-parameter-check}
        For all sufficiently large $s$, we have $ 0\le r_2<d$, $|W|=3u+h$, $\frac{\zeta}{2}u\le h\le\zeta u$, $u\le3(d-r_2)$ and $\frac{c}{3+\zeta}\le u\le c$.
\end{claim}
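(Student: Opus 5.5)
All five assertions should follow from explicit formulas once the floor in $r_2$ is isolated. Write $r_2=\theta-\varphi$ with $\varphi\in[0,1)$, and compute everything first for the real value $\theta$, treating $\varphi$ as an $O(1)$ correction. The identity $|W|=3u+h$ needs no estimate: $3u+h=3c-3r_2+c-3d+3r_2=4c-3d=|W|$.

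\emph{Step 1: closed forms.} Direct algebra from the definition of $\theta$ gives
\[
c-\theta=\frac{4c-3d}{3+\zeta}\eqqcolon u_\theta,
\qquad
c-3d+3\theta=\frac{\zeta(4c-3d)}{3+\zeta}=\zeta u_\theta .
\]
Hence $u=u_\theta+\varphi$ and $h=\zeta u_\theta-3\varphi$, which can be rewritten as $h=\zeta u-(3+\zeta)\varphi$. Immediately $h\le\zeta u$.

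\emph{Step 2: the bounds on $u$.} From $d\le c-1$ we get $4c-3d\ge c+3$, so $u\ge u_\theta\ge (c+3)/(3+\zeta)\ge c/(3+\zeta)$. The upper bound $u\le c$ is equivalent to $r_2\ge0$. Combining this lower bound with $h=\zeta u-(3+\zeta)\varphi$ gives $h\ge \zeta u-4\ge \zeta u/2$ once $\zeta u\ge 8$. That holds for large $s$ because $u\ge c/(3+\zeta)$ and $c\ge(\alpha+\varepsilon)s^{2/3}\to\infty$.

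\emph{Step 3: $0\le r_2<d$.} For $r_2\ge0$ it suffices that $\theta\ge0$, i.e.\ $3d\ge(1-\zeta)c$. This follows from $d>x_1c=\frac{1+3\zeta}{3(1+\zeta)}c$, since $\frac{1+3\zeta}{1+\zeta}\ge1-\zeta$ is equivalent to $1+3\zeta\ge1-\zeta^2$. For $r_2<d$ it suffices that $\theta<d$, i.e.\ $3d-(1-\zeta)c<(3+\zeta)d$. This rearranges to $-\zeta d<(1-\zeta)c$, which is true since $\zeta<1/2$.

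\emph{Step 4: $u\le 3(d-r_2)$.} This is the only inequality that genuinely uses the lower endpoint of Subcase~3c. We have $3(d-r_2)=3(d-\theta)+3\varphi$ and
\[
3(d-\theta)=\frac{3\zeta d+3(1-\zeta)c}{3+\zeta}.
\]
So $3(d-\theta)\ge u_\theta$ is equivalent to $(3+3\zeta)d\ge(1+3\zeta)c$, i.e.\ $d\ge x_1c$, which holds in this subcase. The floor corrections go the right way, since $u=u_\theta+\varphi\le u_\theta+3\varphi$.

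I expect no real obstacle. The main risk is sign bookkeeping with $\varphi$. The one structural point is that $x_1$ was chosen precisely so that Step~4 holds at $d=x_1c$.
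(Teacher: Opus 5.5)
Your proof is correct and follows essentially the same route as the paper. Both isolate the floor via $0\le\theta-r_2<1$, use $h-\zeta u=(3+\zeta)(r_2-\theta)$ for the bounds on $h$, bound $u$ below by $c-\theta=(4c-3d)/(3+\zeta)$, and reduce $u\le 3(d-r_2)$ to $d\ge x_1c$ through the real value $\theta$; the only cosmetic difference is that you track the floor correction with $u_\theta$ and $\varphi$, where the paper rewrites the last inequality as $r_2\le(3d-c)/2$.
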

\begin{proof}
        By the definition of $x_1$ and the assumption $d\ge x_1c$, we have $\theta\ge \zeta c/(1+\zeta)$, so $r_2\ge0$ for all sufficiently large $s$. Also $d-\theta=(\zeta d+(1-\zeta)c)/(3+\zeta)>0$, and therefore $r_2<d$. The identity $|W|=3u+h$ follows from $4c-3d=3(c-r_2)+(c-3d+3r_2)$.

        The real value $\theta$ was chosen so that $c-3d+3\theta=\zeta(c-\theta)$. Since $0\le\theta-r_2<1$ and $u=c-r_2$, we have $h-\zeta u=(3+\zeta)(r_2-\theta)$, and hence $-(3+\zeta)<h-\zeta u\le0$. Thus $h\le\zeta u$, and since $u\to\infty$, we also have $h\ge(\zeta/2)u$ for all sufficiently large $s$.

        The inequality $u\le3(d-r_2)$ is equivalent to $r_2\le(3d-c)/2$. Since $r_2\le\theta$, it is enough to check $\theta\le(3d-c)/2$, which is equivalent to $d\ge x_1c$ and follows from the present assumption. Finally, $u\le c$ follows from $r_2\ge0$, while $u=c-r_2\ge c-\theta=(4c-3d)/(3+\zeta)\ge c/(3+\zeta)$ because $d\le c$.
\end{proof}

By \cref{clm:large-parameter-check}, we have $0\le h\le\rho_1u$ and $u\ge C_1$ for all sufficiently large $s$, since $\zeta<\rho_1$. Fix $I\in\binom{[\ell+d]}{d-r_2}$. Since $u\le3(d-r_2)$, choose $P\in\binom{\bigcup_{i\in I}E_i}{u}$. The matching edges $E_i$ with $i\notin I$ form an $(\ell+r_2)$-matching. For each $x\in P$, define
\[
        \G_x^{(3)}=\{T\in\binom W3:x\cup T\in\F_4\} \text{ and }
        \G_x^{(4)}=\{T\in\binom W4:x\cup T\in\F_5\}.
\]
We apply \cref{lem:p-ordered-local-blocker} with the label set $P$, the tail set $X=W$, and $(m,p,b)=(3,1,3)$. If a mixed ordered partition of $P$ and $W$ existed, with exactly $h$ tails of size $4$, then the tails of size $3$ would give members of $\F_4$, and the tails of size $4$ would give members of $\F_5$. Thus it would produce $u$ pairwise disjoint members of $\F_4\cup\F_5$, all contained in $P\cup W$. These sets are disjoint from every $E_i$ with $i\notin I$, because their vertices in $V(\mathcal N)$ lie in $P\subseteq\bigcup_{i\in I}E_i$, while their remaining vertices lie in $W$. They would therefore combine with $\{E_i:i\notin I\}$ to form $(\ell+r_2)+u=s$ pairwise disjoint members of $\F$, a contradiction. Hence
\[
        \sum_{x\in P}
        \left(
        \left|\binom W3\setminus\G_x^{(3)}\right|
        +
        \left|\binom W4\setminus\G_x^{(4)}\right|
        \right)
        \ge
        \gamma_1(h+1)u^3.
\]

We sum this estimate over all admissible pairs $(I,P)$. A missing set counted in the sum has the form $x\cup T$, where $x\in V(\mathcal N)$ and $T\subseteq W$; since $W\cap V(\mathcal N)=\emptyset$, the vertex $x$ is uniquely determined. If $x\in E_i$, then $i\in I$, so $I$ has at most $\binom{\ell+d-1}{d-r_2-1}$ possible choices. For each fixed $I$, the set $P$ must contain $x$, so $P$ has at most $\binom{3(d-r_2)-1}{u-1}$ possible choices. Since the number of admissible pairs $(I,P)$ is $\binom{\ell+d}{d-r_2}\binom{3(d-r_2)}u$, division by this maximum multiplicity gives
\[
        |\Y_{\ge4}|
        \ge
        \frac{\binom{\ell+d}{d-r_2}\binom{3(d-r_2)}u}
        {\binom{\ell+d-1}{d-r_2-1}\binom{3(d-r_2)-1}{u-1}}
        \gamma_1(h+1)u^3
        =
        3\gamma_1(\ell+d)(h+1)u^2.
\]
By \cref{clm:large-parameter-check}, $h\ge(\zeta/2)u$ and $u\ge c/(3+\zeta)$; also $\ell+d\ge\ell=s-c>s/2$ for all sufficiently large $s$. Hence
\begin{equation}\label{eq:large-Y-explicit}
        |\Y_{\ge4}|
        \ge C_2sc^3
\end{equation}
with $C_2=\frac{3\gamma_1\zeta}{4(3+\zeta)^3}>0$.
On the other hand, by \eqref{eq:Sd-crude-cs2} and \eqref{eq:Ble2-less-3s2},
\[
        S_d+|\F_1|+|\F_2|+1\le32cs^2+3s^2+1=o(sc^3),
\]
because $c\ge(\alpha+\varepsilon)s^{2/3}$. Combining this with \eqref{eq:large-Y-explicit} proves \eqref{eq:m3-case3-target} in Subcase~3c.

The ranges $0\le d\le\rho_0 c$, $\rho_0 c<d\le x_1c$, and $x_1c<d\le c-1$ cover all possible values of $d$, since $0<\rho_0<x_1<1$.
This completes Case~3.

\subsection{Equality cases}

The three cases prove \eqref{eq:key-high-terminal}, hence $|\F|\le|\mathcal P'(3,s,\ell)|$.
It remains to discuss equality.

Suppose equality holds.
Cases 1 and 3 yield strict inequalities.
In Case 2, Subcase~2a is strict, and all subcases with $\F_1\cup\F_2\ne\emptyset$ are strict.
Therefore equality can only occur when $\F_1=\F_2=\emptyset$, $|H|=A_3$, and $\Y_{\ge4}=\emptyset$.
Since equality can only occur in Case~2, we have $\nu(H)\le a=\ell-1$.
Since $\nu(H)<\ell$ and $n\ge3\ell-1$, applying \cref{thm:EMC3} to $H$ gives
\[
        |H|\le
        \max\left\{\binom n3-\binom{n-\ell+1}{3},\binom{3\ell-1}{3}\right\}.
\]
The first alternative is $\binom n3-\binom{n-\ell+1}{3}=\binom n3-\binom{r_0}{3}$, which is strictly smaller than $A_3$, because $A_3>\Lambda_3=\binom a2+\binom n3-\binom{r_0}{3}$.
Hence the equality statement in \cref{thm:EMC3} gives $H=\binom{L'}3$ for some $L'\in\binom{[n]}{3\ell-1}$.
Since $\Y_{\ge4}=\emptyset$, we obtain $\F=\binom{L'}3\cup\binom{[n]}{\ge4}=\mathcal P'(3,s,\ell;L')$.

Finally, no extremal family can contain $\emptyset$.
If one did, the initial replacement by a missing singleton would produce an extremal family containing a singleton, whereas the equality family just obtained contains no singleton.
Thus equality holds exactly for the families $\mathcal P'(3,s,\ell;L')$ with $L'\in\binom{[n]}{3\ell-1}$.
This proves \cref{thm:m3-high-range}.

\section{Proof of \cref{thm:conj}}
\label{sec:counterexample}

Fix an integer $m\ge3$.
Throughout this section we write $n=ms+c$ and $\ell=s-c$.
We prove that, for a suitable range of $c$, $\mathcal R(m,s,\ell)$ is larger than all four families appearing in \cref{conj}, and hence \cref{conj} fails.

We first show that $\nu(\mathcal R(m,s,\ell))<s$.
Recall that $\mathcal R(m,s,\ell)\coloneqq\{E\in 2^{[n]}:(m-1)|E|+|E\cap[ms-(m-1)c-1]|\ge m^2\}$.
If $A_1,\ldots,A_s$ are disjoint sets in $\mathcal R(m,s,\ell)$, then a contradiction follows from
\[
        sm^2-1=(m-1)n+\bigl(ms-(m-1)c-1\bigr)\ge\sum_{i=1}^s \bigl((m-1)|A_i|+|A_i\cap[ms-(m-1)c-1]|\bigr)\ge sm^2.
\]

For a family $\mathcal A\subseteq2^{[n]}$, write $\mathcal A^c=2^{[n]}\setminus\mathcal A$.
It is enough to compare the size of their complements.
Let $U_0=[ms-(m-1)c-1]$ and $V_0=[n]\setminus U_0$.
For $E\subseteq[n]$, if $i=|E\cap U_0|$ and $j=|E\cap V_0|$, then $E\notin\mathcal R(m,s,\ell)$ precisely when $mi+(m-1)j\le m^2-1$.
Consequently,
\begin{equation}\label{eq:R-complement-exact}
        |\mathcal R(m,s,\ell)^c|
        =
        \sum_{\substack{i,j\ge0\\ mi+(m-1)j\le m^2-1}}
        \binom{ms-(m-1)c-1}{i}\binom{mc+1}{j}.
\end{equation}
We first derive a uniform upper bound for this complement.
Assume $c\le s^{(m-1)/m}$.
There is a constant $K_0=K_0(m)>0$ such that
\begin{equation}\label{eq:R-complement-upper}
        |\mathcal R(m,s,\ell)^c|
        \le K_0s^{m-1}c
\end{equation}
for all sufficiently large $s$ and all $c\ge1$.
Indeed, every summand in \eqref{eq:R-complement-exact} is at most $O(s^ic^j)$.
If $j=0$, then $i\le m-1$, so $s^ic^j\le s^{m-1}\le s^{m-1}c$.
If $j\ge1$, then it follows from $mi+(m-1)j\le m^2-1$ that $m(m-1-i)\ge(m-1)(j-1)$.
Indeed, subtracting $m-1$ from $mi+(m-1)j\le m^2-1$ gives $mi+(m-1)(j-1)\le m(m-1)$.
Since $c\le s^{(m-1)/m}$, we have $c^{j-1} \le s^{(m-1)(j-1)/m} \le s^{m-1-i}$.
Thus $s^ic^j\le s^{m-1}c$ for every admissible pair $(i,j)$, proving \eqref{eq:R-complement-upper}.

We now compare $\mathcal R(m,s,\ell)$ with the four proposed candidates.

\medskip
\noindent\textbf{1. Comparison with $\mathcal P'(m,s,\ell)$.}
\medskip

Let $U'=[m\ell-1]=[ms-mc-1]$.
Since $U'\subseteq U_0$ and $|U_0\setminus U'|=c$, the complements have the following exact forms:
\[
        \mathcal P'(m,s,\ell)^c
        =\binom{[n]}{\le m-1}
        \cup\{E\in\binom{[n]}m:E\not\subseteq U'\},
\]
whereas
\[
        \mathcal R(m,s,\ell)^c
        =\binom{[n]}{\le m-1}
        \cup\{E\in\binom{[n]}m:E\not\subseteq U_0\}
        \cup\binom{V_0}{m+1}.
\]
Therefore
\begin{equation}\label{eq:Pprime-R-exact-difference}
        \begin{aligned}
                |\mathcal P'(m,s,\ell)^c|-|\mathcal R(m,s,\ell)^c|
                 & =\binom{|U_0|}{m}-\binom{|U'|}{m}-\binom{|V_0|}{m+1}          \\
                 & =\binom{ms-(m-1)c-1}{m}-\binom{ms-mc-1}{m}-\binom{mc+1}{m+1}.
        \end{aligned}
\end{equation}
For all sufficiently large $s$, since $c=o(s)$ in the range considered below, $ms-mc-1\ge ms/2$.
Hence
\[
        \begin{aligned}
                \binom{ms-(m-1)c-1}{m}-\binom{ms-mc-1}{m}
                 & =\sum_{q=0}^{c-1}\binom{ms-mc-1+q}{m-1} \\
                 & \ge c\binom{ms-mc-1}{m-1}
                \ge \mu_1s^{m-1}c
        \end{aligned}
\]
for some constant $\mu_1=\mu_1(m)>0$.
Also $\binom{mc+1}{m+1} \le \mu_2c^{m+1}$ for some constant $\mu_2=\mu_2(m)>0$.
Choose $\beta_{\mathrm R}=\beta_{\mathrm R}(m)>0$ so small that $\beta_{\mathrm R}\le1$ and
\[
        \mu_2\beta_{\mathrm R}^m<\frac{\mu_1}{2}.
\]
If $c\le\beta_{\mathrm R}s^{(m-1)/m}$, then
\[
        \binom{mc+1}{m+1}
        \le \mu_2\beta_{\mathrm R}^m s^{m-1}c
        <\frac{\mu_1}{2}s^{m-1}c.
\]
Together with \eqref{eq:Pprime-R-exact-difference}, this gives $|\mathcal P'(m,s,\ell)^c|>|\mathcal R(m,s,\ell)^c|$, and hence $|\mathcal R(m,s,\ell)|>|\mathcal P'(m,s,\ell)|$.

\medskip
\noindent\textbf{2. Comparison with $\mathcal P(m,s,\ell)$.}
\medskip

Let $L\in\binom{[n]}{\ell-1}$.
A set $E$ is outside $\mathcal P(m,s,\ell;L)$ exactly when $|E|+|E\cap L|\le m$.
In particular, every $m$-set contained in $[n]\setminus L$ belongs to $\mathcal P(m,s,\ell;L)^c$.
Since $|[n]\setminus L|=n-(\ell-1)=(m-1)s+2c+1$, there is a constant $\mu_3=\mu_3(m)>0$ such that
\[
        |\mathcal P(m,s,\ell)^c|
        \ge \binom{(m-1)s+2c+1}{m}
        \ge \mu_3s^m
\]
for all sufficiently large $s$.
On the other hand, by \eqref{eq:R-complement-upper} and $c\le\beta_{\mathrm R}s^{(m-1)/m}$,
\[
        |\mathcal R(m,s,\ell)^c|
        \le K_0\beta_{\mathrm R}s^{m-1/m}=o(s^m).
\]
Thus, for sufficiently large $s$, $|\mathcal R(m,s,\ell)|>|\mathcal P(m,s,\ell)|$.

\medskip
\noindent\textbf{3. Comparison with $\mathcal Q(m,s,\ell)$.}
\medskip

Let $U_1=[ms-c-1]$ and $V_1=[n]\setminus U_1$.
Then $|V_1|=2c+1$.
The complement of $\mathcal Q(m,s,\ell)$ consists of sets satisfying $2|E\cap U_1|+|E\cap V_1|\le2m-1$.
In particular, $\binom{V_1}{2m-1}\subseteq\mathcal Q(m,s,\ell)^c$.
Hence, for some constant $\mu_4=\mu_4(m)>0$,
\begin{equation}\label{eq:Q-complement-lower}
        |\mathcal Q(m,s,\ell)^c|
        \ge \binom{2c+1}{2m-1}
        \ge \mu_4c^{2m-1}
\end{equation}
for all sufficiently large $s$.
Choose $\alpha_{\mathrm R}=\alpha_{\mathrm R}(m)>0$ large enough so that $\mu_4\alpha_{\mathrm R}^{2m-2}>2K_0$.
If $c\ge\alpha_{\mathrm R}s^{1/2}$, then by \eqref{eq:Q-complement-lower},
\[
        |\mathcal Q(m,s,\ell)^c|
        \ge \mu_4c\cdot c^{2m-2}
        \ge \mu_4\alpha_{\mathrm R}^{2m-2}s^{m-1}c
        >2K_0s^{m-1}c.
\]
Using \eqref{eq:R-complement-upper}, we obtain $|\mathcal R(m,s,\ell)|>|\mathcal Q(m,s,\ell)|$.

\medskip
\noindent\textbf{4. Comparison with $\mathcal W(m,s,\ell)$.}
\medskip

The complement of $\mathcal W(m,s,\ell)$ is $\mathcal W(m,s,\ell)^c
        =\bigl\{E\subseteq[n]:|E\cap[ms-1]|\le m-1\bigr\}$.
Since $|[n]\setminus[ms-1]|=c+1$, we have
\[
        |\mathcal W(m,s,\ell)^c|
        =2^{c+1}\sum_{i=0}^{m-1}\binom{ms-1}{i}
        \ge2^{c+1}\binom{ms-1}{m-1}.
\]
Thus, for some constant $\mu_5=\mu_5(m)>0$,
\[
        |\mathcal W(m,s,\ell)^c|
        \ge \mu_5 2^cs^{m-1}.
\]
Since $c\ge\alpha_{\mathrm R}s^{1/2}$, we have $2^c/c\to\infty$.
Therefore, for all sufficiently large $s$,
\[
        \mu_5 2^cs^{m-1}>K_0s^{m-1}c
        \ge |\mathcal R(m,s,\ell)^c|.
\]
Consequently, $|\mathcal R(m,s,\ell)|>|\mathcal W(m,s,\ell)|$.

\medskip

The arguments above show that
\[
        |\mathcal R(m,s,\ell)|
        >
        \max\bigl\{
        |\mathcal P(m,s,\ell)|,
        |\mathcal P'(m,s,\ell)|,
        |\mathcal Q(m,s,\ell)|,
        |\mathcal W(m,s,\ell)|
        \bigr\}
\]
whenever $s$ is sufficiently large and $\alpha_{\mathrm R}s^{1/2}\le c\le\beta_{\mathrm R}s^{(m-1)/m}$.
Since $\mathcal R(m,s,\ell)$ has matching number less than $s$, we obtain
\[
        e(n,s)\ge |\mathcal R(m,s,\ell)|
        >
        \max\bigl\{
        |\mathcal P(m,s,\ell)|,
        |\mathcal P'(m,s,\ell)|,
        |\mathcal Q(m,s,\ell)|,
        |\mathcal W(m,s,\ell)|
        \bigr\}.
\]
This proves \cref{thm:conj}.

\appendix

\section{Technical estimates for \cref{sec:m3-high}}
\label{app:m3-technical-estimates}

\begin{proof}[Proof of \cref{clm:m3-shifted-estimates}]
        For \eqref{eq:m3-shifted-h3}, direct expansion gives
        \begin{equation}\label{eq:shifted-hm-1}
                \Lambda_3-L_1-h_3(n-1,\ell-4)-B_{\le2}
                =\frac{13\ell^2-46\ell s-11\ell+41s^2+17s+4}{2},
        \end{equation}
        and
        \begin{equation}\label{eq:shifted-hm-2}
                \Lambda_3-L_2-h_3(n-2,\ell-3)-B_{\le2}
                =\frac{5\ell^2-14\ell s+5\ell+9s^2-15s+8}{2}.
        \end{equation}
        Since $\ell=s-c$, the numerators in \eqref{eq:shifted-hm-1} and \eqref{eq:shifted-hm-2} are
        \[
                8s^2+20cs+13c^2+6s+11c+4 \text{ and } 4cs+5c^2-10s-5c+8,
        \]
        respectively.
        These two numerators are positive for all sufficiently large $s$, because $c>(\alpha+\varepsilon)s^{2/3}$.
        Therefore, $\Lambda_3-L_i-h_3(N_i,p_i-1)>B_{\le2}$ for $i=1,2$.
        Since $A_3>\Lambda_3$ by \eqref{eq:A3-beats-Lambda3}, \eqref{eq:m3-shifted-h3} follows.

        For \eqref{eq:m3-shifted-clique}, direct expansion gives
        \begin{equation}\label{eq:clique-shift-1}
                6\left(A_3-L_1-\binom{3p_1-1}{3}-B_{\le2}\right)
                =240\ell^2+30\ell s-69s^2-1068\ell+51s+1314,
        \end{equation}
        and
        \begin{equation}\label{eq:clique-shift-2}
                6\left(A_3-L_2-\binom{3p_2-1}{3}-B_{\le2}\right)
                =156\ell^2+54\ell s-117s^2-570\ell+111s+480.
        \end{equation}
        Let
        \[
                \alpha_*=(\sqrt{321}-9)/10.
        \]
        Then $t(s)/s=\alpha_*+O(s^{-1})$. Since $\ell>t(s)$, there is a constant $C>0$ such that $\ell/s\ge \alpha_* - C/s$ for all sufficiently large $s$.
        Define $f_1(x)=240x^2+30x-69$ and $f_2(x)=156x^2+54x-117$.
        Both $f_1$ and $f_2$ are increasing on $x>0$, and
        \[
                f_1(\alpha_*)=\frac{4344-201\sqrt{321}}5>0 \text{ and }
                f_2(\alpha_*)=\frac{11538-567\sqrt{321}}{25}>0.
        \]
        Hence $f_i(\ell/s)>0$ for $i=1,2$ and all sufficiently large $s$.
        The leading quadratic parts of \eqref{eq:clique-shift-1} and \eqref{eq:clique-shift-2} are respectively
        $s^2f_1(\ell/s)$ and $s^2f_2(\ell/s)$, while the remaining terms are $O(s)$.
        Thus both expressions are positive for all sufficiently large $s$, proving \eqref{eq:m3-shifted-clique}.
\end{proof}

\begin{proof}[Proof of \cref{clm:second-term}]
        Since $n=4s-\ell$, we have $n-\ell-d=4s-2\ell-d=2s+2c-d$.
        Set
        \[
                \Delta_d
                \coloneqq
                \binom{3\ell+3d+2}{3}
                -\left(\binom n3-\binom{4s-2\ell-d}{3}\right).
        \]
        It suffices to prove $\Delta_d>0$.
        Let $Q_s(x)=10x^2+(18s-17)x-24s^2+6s+6$.
        By the definition of $t(s)$, it is the larger root of $Q_s(x)$.
        Since $Q_s$ has positive leading coefficient and $\ell>t(s)$, we have $Q_s(\ell)>0$.
        Direct expansion gives
        \[
                \Delta_0=\frac{1}{3}\ell\bigl(Q_s(\ell)+26\ell+6s-4\bigr)>0.
        \]
        For $0\le d\le c-2$, direct calculation gives
        \[
                2(\Delta_{d+1}-\Delta_d)
                =d(26d+50\ell+8s+42)+C_s(\ell),
        \]
        where
        \[
                \begin{aligned}
                        C_s(\ell)
                         & =23\ell^2+16s\ell+39\ell-16s^2+12s+18           \\
                         & =Q_s(\ell)+(13\ell^2-2s\ell+8s^2)+56\ell+6s+12.
                \end{aligned}
        \]
        The quadratic $13\ell^2-2s\ell+8s^2$ is positive for all real $\ell$, since its discriminant is negative.
        Hence $C_s(\ell)>0$, and therefore $\Delta_{d+1}>\Delta_d$ for $0\le d\le c-2$.
        Together with $\Delta_0>0$, this proves $\Delta_d>0$ for every $0\le d\le c-1$.
        Consequently, the second term in \eqref{equ:H-maximum} is larger, so $|H|\le \binom{3\ell+3d+2}{3}$.
        By the definition of $S_d$, this also gives $|H|-A_3\le S_d$.
\end{proof}

\begin{proof}[Proof of \cref{clm:small-shifted-second-term}]
        For $j\in\{1,2\}$, define
        \[
                \Delta_{d,j}
                =
                \binom{3\ell+3d-1}{3}
                -\left(
                \binom{n-j}{3}-\binom{2s+2c-d-j+1}{3}
                \right).
        \]
        Recall that $Q_s(\ell)=10\ell^2+(18s-17)\ell-24s^2+6s+6>0$.
        Direct expansion gives
        \[
                \begin{aligned}
                        6\Delta_{d,1}
                         & =(\ell+d-1)\bigl(2Q_s(\ell)+24s-6\ell-12
                        +d(49\ell+12s-31)+26d^2\bigr),               \\
                        6\Delta_{d,2}
                         & =(\ell+d-1)\bigl(2Q_s(\ell)+48s-15\ell-24
                        +d(49\ell+12s-34)+26d^2\bigr).
                \end{aligned}
        \]
        Since $\ell+d-1>0$ and $Q_s(\ell)>0$, it remains only to check the displayed lower-order terms.
        If $d=0$, then $\ell<s$ gives $24s-6\ell-12>18s-12>0$ and $48s-15\ell-24>33s-24>0$ for all sufficiently large $s$.
        If $d\ge1$, then $49\ell+12s-31$ and $49\ell+12s-34$ are positive for all sufficiently large $s$.
        Therefore $\Delta_{d,j}>0$ for $j=1,2$, proving the claim.
\end{proof}

\end{document}